\documentclass[11pt,a4paper,oneside]{amsart}

\usepackage[margin=3cm,top=3cm,bottom=3cm]{geometry}
\usepackage[british,UKenglish]{babel}
\usepackage[final]{pdfpages}

\usepackage{amsthm}
\usepackage{amsfonts}
\usepackage{amsmath}
\usepackage{amssymb}
\usepackage{mathtools}
\usepackage{centernot}
\usepackage{stmaryrd}

\usepackage{marginnote}
\usepackage{soul}
\usepackage[normalem]{ulem}
\mathtoolsset{showonlyrefs}
\usepackage{comment}
\usepackage{enumitem}
\usepackage{tikz-cd}

\usepackage{float}
\usepackage{import}
\usepackage{xifthen}
\usepackage{pdfpages}
\usepackage{transparent}
\usepackage{afterpage}
\usepackage{adjustbox}
\usepackage[pagewise]{lineno}

\usepackage{hyperref}
\hypersetup{colorlinks=true,citecolor=blue,urlcolor =black,linkbordercolor={1 0 0}}
\usepackage[style=alphabetic,maxbibnames=99,bibencoding=utf8]{biblatex}
\newtheorem{theorem}{Theorem}[section]

\newtheorem{corollary}[theorem]{Corollary}
\newtheorem{lemma}[theorem]{Lemma}
\newtheorem{proposition}[theorem]{Proposition}

\newtheorem{introthm}{Theorem}

\theoremstyle{definition}
\newtheorem{definition}[theorem]{Definition}
\newtheorem{example}[theorem]{Example}
\newtheorem{notation}[theorem]{Notation}

\theoremstyle{remark}
\newtheorem{remark}[theorem]{Remark}

\newcommand{\vir}{\mathrm{vir}}
\newcommand{\red}{\mathrm{red}}
\newcommand{\C}{\mathbb{C}}
\newcommand{\Z}{\mathbb{Z}}
\newcommand{\A}{\mathbb{A}}
\newcommand{\Q}{\mathbb{Q}}

\renewcommand{\O}{\mathcal{O}}
\newcommand{\ddR}{\mathrm d_{\mathrm{dR}}}
\newcommand{\Mf}{\mathfrak{M}}

\newcommand{\MMHM}{\mathrm{MMHM}}
\newcommand{\MHM}{\mathrm{MHM}}

\renewcommand{\phi}{\varphi}

\DeclareMathOperator{\HC}{\mathrm{HC}}
\DeclareMathOperator{\id}{\mathrm{id}}
\DeclareMathOperator{\Hom}{\mathrm{Hom}}

\DeclareMathOperator{\rk}{\mathrm{rk}}
\DeclareMathOperator{\tr}{\mathrm{tr}}
\DeclareMathOperator{\tot}{\mathrm{tot}}
\DeclareMathOperator{\Sym}{\mathrm{Sym}}
\DeclareMathOperator{\Crit}{\mathrm{Crit}}
\DeclareMathOperator{\Spec}{\mathrm{Spec}}
\DeclareMathOperator{\Chow}{\mathrm{Chow}}
\DeclareMathOperator{\End}{\mathrm{End}}
\DeclareMathOperator{\length}{\mathrm{length}}

\DeclareMathOperator{\QCoh}{\mathsf{QCoh}}

\DeclareMathOperator{\HH}{\mathrm{HH}}

\DeclareMathOperator{\ch}{\mathrm{ch}}

\DeclareMathOperator{\Perv}{\mathsf{Perv}}
\DeclareMathOperator{\rat}{\mathrm{rat}}
\DeclareMathOperator{\IC}{\mathrm{IC}}
\DeclareMathOperator{\Sch}{\mathsf{Sch}}
\DeclareMathOperator{\coker}{\mathrm{coker}}
\DeclareMathOperator{\gr}{\mathrm{gr}}

\title{$\chi$-Independence for BPS Invariants of Warped Local Curves}
\author{Ana Pavlakovi\'c}

\begin{document}

\maketitle

\begin{abstract}
  A well-known conjecture of Toda's asserts that the BPS invariants associated to a Calabi--Yau threefold are independent of the chosen Euler characteristic. We answer the question in the affirmative for a certain class of local surfaces, which we call warped local curves. Along the way we show that the moduli stack of sheaves on a warped local curve can be written as the global critical locus of a function. We also show that the moduli stack of sheaves on a curve fibration over $\A^1$ is the global critical locus of a function. 
\end{abstract}

\section{Introduction}

Curve and sheaf counting frameworks like Gromov--Witten or Donaldson--Thomas theory have played an important role in the study of enumerative geometry. Gopakumar–Vafa (BPS) invariants are conjectured to give an integral reformulation of the Gromov–Witten theory of a Calabi–Yau threefold. The relevant Gromov–Witten invariants are indexed by the genus and the curve class and do not involve the Euler characteristic of a sheaf. By contrast, sheaf-theoretic constructions of BPS invariants use moduli spaces of one-dimensional sheaves with fixed Euler characteristic. For such a construction to recover the Gopakumar–Vafa invariants predicted by Gromov–Witten theory, its output should be independent of the Euler characteristic. This expectation is formalized by Toda's $\chi$-independence conjecture \cite[Conjecture 2.15]{Toda2023}.

We begin with the general set-up. 
Let $X$ be a smooth quasi-projective Calabi--Yau threefold over $\C$. For $\beta$ a curve class and $m$ an integer, let 
\begin{equation}
\label{eq:159}
  \mathfrak{M}_X^{\mathrm{ss}}(\beta,m)
\end{equation}
be the moduli stack of pure one-dimensional compactly supported semistable sheaves $F$ on $X$, fundamental cycle $[F] = \beta$, and Euler characteristic $\chi(F) = m$. We denote its good moduli space by 
\begin{equation}
\label{eq:175}
  p\colon \mathfrak{M}_X^{\mathrm{ss}}(\beta,m) \to M_X^{\mathrm{ss}}(\beta,m).
\end{equation}
The stack $\mathfrak{M}_X^{\mathrm{ss}}(\beta,m)$ carries the oriented d-critical structure induced by the $(-1)$-shifted symplectic structure on the derived moduli stack of sheaves on $X$ \cite{PantevToenVaquieVezzosi2013}. Hence, one obtains a perverse sheaf of vanishing cycles \cite{Joyce2015}, and therefore a BPS sheaf on the good moduli space \cite{BenBassatBravBussiJoyce2015}, which we denote by 
\begin{equation}
\label{eq:149}
  \phi_{M_X^{\mathrm{ss}}(\beta,m)}.
\end{equation}
Moreover, there is a Hilbert--Chow morphism 
\begin{equation}
\label{eq:148}
  \HC\colon M^{\mathrm{ss}}_X(\beta,m) \to \Chow_{\beta}(X),
\end{equation}
mapping a sheaf to its one-dimensional Fitting support cycle.

Following Maulik--Toda \cite{MaulikToda2018}, the generalised Gopakumar--Vafa invariants $n_{g,\beta,m}$ \cite{GopakumarVafa1998a}, \cite{GopakumarVafa1998b} are defined by the identity 
\begin{equation}
\label{eq:95}
  \sum_{i\in\Z} \chi\left(\mathcal{H}^i\left(\HC_{*}\phi_{M_{X}^{\mathrm{ss}}(\beta,m)}\right)\right) y^i = \sum_{g\geq0} n_{g,\beta,m} \left(y^{1/2} + y^{-1/2}\right)^{2g}.
\end{equation}
Toda's $\chi$-independence conjecture \cite[Conjecture 2.15]{Toda2023} predicts that the invariants $n_{g,\beta,m}$ are independent of the Euler characteristic $m$. Thus, it suffices to prove that the Laurent polynomial on the left-hand side of \eqref{eq:95} is independent of $m$. In fact, we prove that the coefficients
\begin{equation}
\label{eq:147}
  \chi\left(\mathcal{H}^i\left(\HC_{*}\phi_{M_X^{\mathrm{ss}}(\beta,m)}\right)\right) 
\end{equation}
are independent of $m$ for the threefolds described below. 

Let $C$ be a smooth projective curve over $\C$ and let $L$ be a line bundle on $C$ of degree $> 2g(C)-2$. Set
  \begin{equation}
  \label{eq:83}
    \pi\colon Y \coloneq \tot_C(L)\to C.
  \end{equation}
For a class $c \in H^1(Y,\omega_Y)$, denote by 
\begin{equation}
\label{eq:160}
  q\colon X_c \to Y
\end{equation}
the total space of the corresponding $\omega_Y$-torsor. Since $q$ is Zariski locally isomorphic over $Y$ to the total space of $\omega_Y$, the variety $X_c$ is a smooth quasi-projective Calabi--Yau threefold. We refer to such threefolds as \emph{warped local curves}. The main result of the paper is the following.

\begin{introthm}[= Corollary \ref{cor:chi-independence}]\label{thm:B}
  Let $r\in\mathbb{Z}_{>0}$, $m, m'  \in \Z$.
  For $n \in \Z$, let 
  \begin{equation}
  \label{eq:28}
    \HC \colon M_{X_c}^{\mathrm{ss}}(r,n) \to \Chow_{r[C]}(X_c)
  \end{equation}
  be the Hilbert--Chow morphism for compactly supported, pure one-dimensional, semistable sheaves $F$ on $X_c$ satisfying 
  \begin{equation}
  \label{eq:176}
    (\pi \circ q)_{\#} [F] = r[C] \qquad \text{and} \qquad \chi(F) = n.
  \end{equation}
  Then for every $i \in \Z$,
  \begin{equation}
  \label{eq:208}
    \chi\left(\mathcal{H}^i\left(\HC_{*}\phi_{M_{X_c}^{\mathrm{ss}}(r,m)}\right)\right) = \chi\left(\mathcal{H}^i\left(\HC_{*}\phi_{M_{X_c}^{\mathrm{ss}}(r,m' )}\right)\right).
  \end{equation}
  In particular, the Maulik--Toda BPS invariants $n_{g,r[C],m}$ of $X_c$ are independent of $m$.
\end{introthm}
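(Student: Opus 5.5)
The plan is to reduce to the surface $Y$ and then to a statement about Higgs-type sheaves on $C$, where $\chi$-independence is available through a tensoring or Fourier-type symmetry and known results on twisted Higgs bundles. The first step uses the abstract: the moduli stack $\mathfrak{M}^{\mathrm{ss}}_{X_c}(r,n)$ is a global critical locus. Concretely, a compactly supported one-dimensional sheaf on $X_c$ pushes forward along $q$ to a sheaf $E$ on $Y$ together with an ``$\omega_Y$-twisted endomorphism'' deformed by the class $c$. Since $Y=\tot_C(L)$ and $\deg L>2g-2$, we can push further to $C$. We then write $\mathfrak{M}^{\mathrm{ss}}_{X_c}(r,n)$ as $\Crit(f_c)$ for a function $f_c$ on a smooth stack $\mathfrak{N}(r,n)$. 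The natural candidate for $\mathfrak{N}(r,n)$ is the stack of semistable pairs $(E,\theta)$, where $E$ is a rank $r$ bundle on $C$ and $\theta\in H^0(\End E\otimes L)$, possibly with an extra field. The function $f_c$ is a trace pairing such as $\tr(\theta\phi)$ plus a term linear in $c$. The condition $\deg L>2g-2$ should guarantee $H^1(\End E\otimes L)=0$ on the semistable locus, so that $\mathfrak{N}(r,n)$ is smooth. We also check that the induced d-critical structure and orientation agree with the ones from \cite{PantevToenVaquieVezzosi2013}, so that $\phi_{M^{\mathrm{ss}}_{X_c}(r,n)}$ is the BPS sheaf of the pair $(\mathfrak{N},f_c)$ up to a possible local system twist.

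The second step applies dimensional reduction together with the support property. Because $f_c$ is linear in one fibre direction, the pushforward of the vanishing cycle sheaf along the projection forgetting that direction is computed by the zero locus of a section. This identifies $\HC_*\phi$ with a pushforward of a constant or intersection-complex sheaf from a moduli space of $L$-twisted Higgs bundles $M_{\mathrm{Higgs}}^{L}(r,n)$, or a $c$-deformed variant, to a Hitchin-type base. It is convenient to view $\Chow_{r[C]}(X_c)$ as mapping to $\Chow_{r[C]}(Y)\cong\bigoplus_i H^0(C,L^i)$, so the stalk cohomology of $\HC_*\phi$ is governed by the Hitchin fibration $h\colon M^{L}_{\mathrm{Higgs}}(r,n)\to A$. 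Since only the Euler characteristics $\chi(\mathcal{H}^i)$ are needed, we may compute in the Grothendieck group of constructible sheaves on the Chow variety and replace $\HC_*\phi$ by its class.

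The final step invokes $\chi$-independence for $L$-twisted Higgs bundles with $\deg L>2g-2$, namely the BPS version of the Maulik--Shen and Kinjo--Koseki theorem. This says that the intersection cohomology complexes $h_*\IC$ of $M_{\mathrm{Higgs}}^{L}(r,n)$ are independent of $n$ as objects over $A$, which is proved by the support theorem and comparison over the elliptic locus. We transport this back through the identifications of the first two steps. The $\omega_Y$-direction and the class $c$ should contribute the same factor for every $n$, since $c$ is independent of $n$ and the torsor structure is compatible with $\HC$. That gives \eqref{eq:208}, and the statement about $n_{g,r[C],m}$ follows from \eqref{eq:95}.

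I expect the main obstacle to be the first step: realising the stack as a global critical locus with the correct orientation data in the warped case $c\neq0$, where $X_c$ is not a vector bundle total space. I would first try to trivialise the torsor over $\pi^{-1}(U)$ for an affine open $U$ and glue the potentials, showing that the Čech cocycle of $c$ changes $f_c$ only by a term that vanishes on the critical locus. Failing that, I would show directly that the relevant BPS sheaves for $c$ and $c=0$ have equal Euler characteristic classes, using a $\C^*$-degeneration $c\mapsto tc$ and constancy of vanishing cycles in the family.
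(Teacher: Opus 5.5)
Your outline has the right skeleton (a critical chart on the surface side, then Maulik--Shen), but the step that carries the proof is missing, and the substitutes you propose would not work.

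\textbf{The potential must factor through the Hitchin base.} The correct chart is the stack $\mathfrak{M}_Y$ of your candidate pairs $(E,\theta)$, with no extra field, and the potential is not of the form $\tr(\theta\phi)+\dots$. Every $c\in H^1(Y,\omega_Y)$ has the de Rham primitive $c'=\sum_k\frac{1}{k}\tau_k(c_k)$. So Kinjo--Masuda ($\QCoh(X_c)\simeq\Pi_3(\QCoh(Y),\delta c')$) together with Bozec--Calaque--Scherotzke gives $\mathfrak{M}_{X_c}\simeq\Crit(f_c)$ with $f_c=\ch^{(0)}(c')$. On Higgs bundles this reads $f_c(E,\theta)=\sum_k\frac{1}{k}c_k(\tr\theta^k)=\tilde f_c(\tilde h_Y(E,\theta))$, where $\tilde f_c$ is a function on $B_Y$ that does not depend on $n$ and is additive under direct sums.

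Additivity is what lets the BPS decomposition theorem (which your plan never invokes) give $\iota_*\phi_{M^{\mathrm{ss}}_{X_c}(r,n)}\simeq\phi_{\tilde f_c\circ h_Y}(\IC_{M_Y^{\mathrm{ss}}(r,n)})$. Properness of $h_Y$ then gives $h_{Y,*}\phi_{\tilde f_c\circ h_Y}\simeq\phi_{\tilde f_c}\,h_{Y,*}$. Only at this point does Maulik--Shen's $n$-independence of $h_{Y,*}\IC$ transport to the vanishing-cycle side.

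Your Step 2 instead invokes dimensional reduction along a direction in which $f_c$ is linear. There is no such direction. For $c\neq0$ the BPS sheaf is a genuine vanishing-cycle sheaf of $\IC_{M_Y}$, not a pushforward of a constant or IC sheaf from a Higgs moduli space. Your claim that $c$ ``should contribute the same factor for every $n$'' is exactly what the factorisation through $B_Y$ proves, and you give no mechanism for it.

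Your fallbacks also fail. Gluing local potentials modulo terms vanishing on the critical locus only reproduces the d-critical structure, which is local data; a global function is precisely what the primitive $c'$ supplies. A degeneration $c\mapsto tc$ cannot work because BPS sheaves jump in $t$. For $r=1$ and $c=\iota_1(c_1)$ with $c_1\neq0$, $f_c=c_1(\theta)$ is a nonzero linear function on $H^0(C,L)$, so $M^{\mathrm{ss}}_{X_c}(1,n)=\emptyset$, whereas $M^{\mathrm{ss}}_{X_0}(1,n)\neq\emptyset$.

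A smaller point: smoothness of $\mathfrak{M}^{\mathrm{ss}}_Y(r,n)$ comes from the vanishing of $\mathbb{H}^2$ of the Higgs deformation complex, by Serre duality and $\deg(\omega_C\otimes L^{-1})<0$. It does not come from $H^1(\End E\otimes L)=0$, which fails in general since $E$ need not be semistable.

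\textbf{Passing from $B_Y$ to the Chow variety needs finiteness of $b$.} Knowing that $h_{X_c,*}\phi$ is independent of $n$ controls $\chi(\mathcal{H}^i(\HC_*\phi))$ only if $b\colon S_c\to B_Y$ is finite. Then $b_*$ is perverse t-exact and preserves Euler characteristics, giving $\chi(\mathcal{H}^i(\HC_*\phi))=\chi(\mathcal{H}^i(h_{X_c,*}\phi))$ for each $i$. A merely proper $b$ could mix perverse degrees.

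The paper proves finiteness by combining properness of $h_{X_c}=h_Y\circ\iota$ with a unique-lift property: an integral proper curve $D\subset Y$ has at most one integral lift to $X_c$. This holds because $\omega_Y$ has negative degree on the normalisation $\tilde D$, so the pulled-back $\omega_Y$-torsor has at most one section.

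Your shortcut of replacing $\HC_*\phi$ by its class in the Grothendieck group of constructible sheaves does not substitute for this. That class determines only the alternating sum $\sum_i(-1)^i\chi(\mathcal{H}^i)$, i.e.\ the Gopakumar--Vafa identity at $y=-1$. There $(y^{1/2}+y^{-1/2})^{2g}$ vanishes for $g\geq1$, so it detects only $n_{0,r[C],m}$, not the individual coefficients the statement asks for.
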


Theorem \ref{thm:B} should be viewed as an extension of the local curve case treated by Kinjo--Koseki \cite{KinjoKoseki2026}. They prove Toda's $\chi$-independence conjecture for local curves
\begin{equation}
\label{eq:177}
  X = \tot_C(N), \qquad  \text{where} \quad \rk(N) =2, \; \det N \simeq \omega_C
\end{equation}
as well as a cohomological $\chi$-independence theorem for BPS cohomology of Higgs moduli spaces.
Their proof uses a global critical chart for the local Calabi--Yau threefolds constructed by Kinjo--Masuda \cite{KinjoMasuda2024}. Our setting keeps the surface 
\begin{equation}
\label{eq:161}
  Y  = \tot_C(L),
\end{equation}
but replaces the split local Calabi--Yau geometry $X = \tot_C(N)$ by an arbitrary $\omega_Y$-torsor $X_c \to Y$. This introduces a new comparison problem: $Y$ has a Hitchin morphism after identifying compactly supported sheaves on $Y$ with $L$-valued Higgs bundles, whereas $X_c$ has no intrinsic Hitchin map. The support of a sheaf on $X_c$ instead defines a point in the Hilbert--Chow variety 
\begin{equation}
\label{eq:193}
  \Chow_{r[C]}(X_c).
\end{equation}
Thus, the proof of Theorem \ref{thm:B} requires two ingredients: a global critical chart for the torsor $X_c$, and a comparison between the Hitchin geometry of $Y$ and the Hilbert--Chow geometry of $X_c$.

\begin{introthm}[= Theorem \ref{thm:global critical chart}]\label{thm:A}
  Let $k$ be a field of characteristic $0$. Let $C$ be a smooth projective curve over $k$, let $L$ be a line bundle on $C$, and let 
  \begin{equation}
  \label{eq:202}
    \pi\colon Y:=\tot_C(L)\longrightarrow C
  \end{equation}
  be the total space of $L$. Fix a class $c\in H^1(Y,\omega_Y)$ and denote by $q\colon X_c\to Y$ the total space of the $\omega_Y$-torsor associated to it.
  There exists a function 
  \begin{equation}
  \label{eq:203}
     f_c\colon \mathfrak{M}_Y \to \A^1
  \end{equation}
  on the derived moduli stack $\mathfrak{M}_Y$ of compactly supported perfect complexes on $Y$ such that there exists a natural equivalence of $(-1)$-shifted symplectic derived Artin stacks 
  \begin{equation}
  \label{eq:204}
    \mathfrak{M}_{X_c}\simeq \Crit (f_c).
  \end{equation}
\end{introthm}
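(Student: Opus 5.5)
We build $f_c$ directly on $\mathfrak{M}_Y$ by combining the dimensional-reduction picture for $\omega_Y$-torsors with the trace pairing, then identify $\Crit(f_c)$ with $\mathfrak{M}_{X_c}$ by comparing cotangent complexes and shifted symplectic forms.

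\textbf{The untwisted case $c=0$.} Here $X_0=\tot_Y(\omega_Y)$. It is standard (Bozec, Kinjo and others) that $\mathfrak{M}_{X_0}\simeq T^*[-1]\mathfrak{M}_Y$ as $(-1)$-shifted symplectic stacks. This is $\Crit(0)$, so we set $f_0=0$. For general $c$ we look for $f_c$ with $\Crit(f_c)$ a twisted cotangent bundle. Concretely, $\mathfrak{M}_{X_c}$ should be the $(-1)$-shifted cotangent bundle deformed by the closed $1$-form $\ddR f_c$.

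\textbf{Constructing $f_c$.} A compactly supported perfect complex $F$ on $Y$ has an Atiyah class $\mathrm{At}(F)\in \mathrm{Ext}^1(F,F\otimes\Omega_Y)$. Pairing with $c\in H^1(Y,\omega_Y)$ gives a natural candidate. One natural choice is
\[
f_c(F)=\int_Y c\cdot \ch_2(F),
\]
or, equivalently up to normalisation, the integral of $c$ against $\tr(\mathrm{At}(F)^2)/2$. The integral makes sense because $F$ has compact support and $\ch_2(F)$ lands in compactly supported classes, so the pairing $H^1(Y,\omega_Y)\times H^1_c(Y,\Omega^1_Y)\to H^2_c(Y,\omega_Y)\to k$ is available.

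Since $Y$ is a surface, a $\ch_2$-type class is the right degree. On the derived stack this should be defined as a genuine function, i.e.\ an element of $\O(\mathfrak{M}_Y)$, not just a class. To get it, we use the universal complex $\mathcal F$ on $Y\times\mathfrak{M}_Y$ and its Atiyah class relative to $\mathfrak{M}_Y$, take the trace of its square, cup with $c$, and integrate along the proper support.

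Because $Y=\tot_C(L)$ is affine over $C$, all of this can be made explicit. Compactly supported sheaves on $Y$ are $L$-twisted Higgs pairs $(E,\theta)$ on $C$, and $H^1(Y,\omega_Y)=\bigoplus_{k\ge0}H^1(C,\omega_C\otimes L^{-1}\otimes L^{-k})$ by pushforward along $\pi$. Writing $c=\sum_k c_k$, the function becomes
\[
f_c(E,\theta)=\sum_k \langle c_k,\tr(\theta^{k+1})\rangle/(k+1),
\]
a pairing of $c_k$ with $H^0(C,L^{k+1})$ via Serre duality. This is a polynomial in the Higgs field, and in this form $f_c$ is visibly a function on $\mathfrak{M}_Y$.

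\textbf{Comparing with $\mathfrak{M}_{X_c}$.} Now $\mathfrak{M}_{X_c}$ parametrises $(E,\theta,\psi)$ with $\psi$ a $\pi^*\omega_Y$-twisted endomorphism, glued by the torsor cocycle. On $\Crit(f_c)$, the fibre coordinate is a cotangent vector $\psi$, and the derivative condition is $[\theta,\psi]=\partial f_c/\partial\theta=\sum_k c_k\theta^k$. This is exactly the commutation relation $[\theta,\psi]=c(\theta)$ defining modules over the twisted algebra $\pi_*\O_{X_c}$ relative to $\O_Y$: the torsor deforms the coordinate $w$ on $\omega_Y$ by $w\mapsto w+c$.

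To make the comparison rigorous, we will:
\begin{itemize}
\item identify the derived stacks via Lurie's representability, showing both sides are the derived stack of quasi-coherent modules over the same sheaf of dg-algebras on $C$;
\item check that the symplectic forms agree, using that both forms come from the Serre-duality pairing on $C$.
\end{itemize}

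\textbf{Main obstacle.} The hardest step is the equivalence of derived structures together with matching the shifted symplectic forms, especially making the gluing by the Čech representative of $c$ compatible at the derived level. To handle this, we first work on an affine cover of $C$, where the torsor trivialises, and then glue using independence of the representative up to coboundary. Modifying $c$ by a coboundary changes $f_c$ by zero, since the derivative is exact, which gives the required independence.
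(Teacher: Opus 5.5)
Your explicit formula for $f_c$ is correct: reindexing $k+1\mapsto k$ gives exactly Corollary \ref{corollary:explicit function}. The actual content of the theorem, however, is the equivalence $\mathfrak{M}_{X_c}\simeq\Crit(f_c)$ of $(-1)$-shifted symplectic derived stacks, and you leave this as a to-do list whose items do not work as stated. Lurie representability produces a derived stack from a moduli functor. It does not compare two given derived stacks, and $\Crit(f_c)$, the derived intersection of the zero section with $\ddR f_c$ in $T^*\mathfrak{M}_Y$, is not a priori a moduli of modules over any sheaf of dg-algebras. Identifying such a critical locus with a moduli of objects is precisely the Bozec--Calaque--Scherotzke theorem (Theorem \ref{thm:bcs}: $\mathfrak{M}_{\Pi_3(\QCoh(Y),\delta c')}\simeq\Crit(\ch^{(0)}(c'))$). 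Identifying $\Pi_3(\QCoh(Y),\tilde c)$ with $\QCoh(X_c)$ compatibly with the Calabi--Yau structures, which is what makes the symplectic forms agree, is Kinjo--Masuda (Theorem \ref{thm:km cy completion}). The paper's proof consists of exactly these two inputs plus a primitive for $c$. Your fallback of trivialising the torsor over an affine cover of $C$ and gluing is not a substitute. The function $f_c$ and the shifted symplectic structures are defined through global duality on $Y$, ultimately Serre duality on the projective curve $C$. Over an affine $U\subset C$ one has $H^1(U,\omega_C\otimes L^{-k})=0$, so there is no local function to glue. The real difficulty is the higher coherence of a derived symplectic gluing, not independence of the \v{C}ech representative.

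The conceptual ingredient you are missing is the de Rham primitive. The class $c\in H^1(Y,\omega_Y)\subset\HH_1(\QCoh(Y))$ has a unique negative cyclic lift $\tilde c$, which yields a closed $1$-form on $\mathfrak{M}_Y$. A critical-locus description exists because this form is exact, in the sense that $\tilde c=\delta c'$ for a Hochschild class $c'$. For $Y=\tot_C(L)$ this holds because $\ddR\colon H^1(Y,\Omega^1_Y)\to H^1(Y,\omega_Y)$ is surjective, with $c'=\sum_k\frac1k\tau_k(c_k)$ (Lemma \ref{lem:cohomology-primitive}); then $f_c=\ch^{(0)}(c')$.

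Your intrinsic definition misses this and fails on degree grounds. The product $c\cdot\ch_2(F)$ lies in $H^3_c(Y,\Omega^4_Y)=0$. The pairing $H^1(Y,\omega_Y)\times H^1_c(Y,\Omega^1_Y)\to H^2_c(Y,\omega_Y)$ that you invoke does not exist, since the product lands in $H^2_c(Y,\omega_Y\otimes\Omega^1_Y)$ and there is no natural map to $\omega_Y$. The correct intrinsic expression pairs the primitive $c'\in H^1(Y,\Omega^1_Y)$ with the $\ch_1$-component of $F$ in $H^1_c(Y,\Omega^1_Y)$. The factors $1/(k+1)$ in your explicit formula are this primitive in disguise, which is why the formula comes out right even though it does not follow from your $\ch_2$ expression.

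Finally, your local model is off. Since $q_*\O_{X_c}$ is commutative, a sheaf on $X_c$ is $(E,\theta)$ together with local $\psi_i$ satisfying $[\theta,\psi_i]=0$ and $\psi_i-\psi_j=c_{ij}(\theta)$ for a \v{C}ech representative $(c_{ij})$ of $c$. The twist lives in the gluing, not in a relation $[\theta,\psi]=c(\theta)$.
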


Theorem \ref{thm:A} extends the global critical chart result of Kinjo--Masuda \cite[Theorem 5.2]{KinjoMasuda2024}. In the local curve case treated in \cite{KinjoMasuda2024}, the relevant torsor classes come from $H^0(C,\omega_C \otimes L^{-2}) \subset H^1(Y,\omega_Y)$. We instead treat arbitrary classes in 
\begin{equation}
\label{eq:145}
  H^1(Y,\omega_Y) \simeq \bigoplus_{k\geq 1} H^1(C, \omega_C \otimes L^{-k})
\end{equation}
for $Y  = \tot_C(L)$. The point is that every such class admits a de Rham primitive, which allows the use of \cite[Corollary 6.19]{BozecCalaqueScherotzke2024}.
On the semistable locus identified with $L$-valued Higgs bundles, the function is explicit. Writing 
\begin{equation}
\label{eq:194}
  c = \sum_{k\geq 1} \iota_k(c_k), \qquad c_k \in H^1(C,\omega_C\otimes L^{-k}) \cong H^0(C,L^{\otimes k})^{\vee},
\end{equation}
one has
\begin{equation}
\label{eq:158}
  f_c([(E,\theta)]) = \sum_{k\geq 1 } \frac{1}{k} c_k\left( \tr(\theta^k)\right).
\end{equation}
This trace formula is crucial for two reasons. First, it shows that $f_c$ factors through the Hitchin base 
\begin{equation}
\label{eq:195}
  B_Y = \bigoplus_{i=1}^r H^0(C,L^{\otimes i}).
\end{equation}
Second, $f_c$ is additive under direct sums. The factorisation through $B_Y$ allows us to use Maulik--Shen's cohomological $\chi$-independence theorem for $L$-valued Higgs bundles \cite{MaulikShen2023}, while additivity allows the use of the BPS decomposition theorem in the form used by Kinjo--Koseki \cite{DavisonMeinhardt2020,KinjoKoseki2026}.

A similar method also gives a global critical chart theorem beyond total spaces of line bundles over curves. The additional input needed for the construction is again the existence of the de Rham primitives for classes in $H^1(\omega)$. We prove this for \emph{curve fibrations over $\A^1$}. 

\begin{introthm}[= Corollary \ref{corollary:global critical chart curve fibration}]\label{thm:C}
  In this statement, we assume $k = \C$. Let $S$ be a smooth complex surface and $\pi\colon S \to \A^1$ a proper surjective morphism with connected one-dimensional fibres. Fix a class $c \in H^1(S,\omega_S)$ and denote by $q\colon X_c \to S$ the total space of the $\omega_S$-torsor associated to it. There exists a function 
  \begin{equation}
  \label{eq:191}
    f_c\colon \mathfrak{M}_S \to \A^1
  \end{equation}
  on the derived moduli stack $\mathfrak{M}_S$ of compactly supported perfect complexes on $S$ such that there exists a natural equivalence of $(-1)$-shifted symplectic derived Artin stacks 
  \begin{equation}
  \label{eq:600}
    \mathfrak{M}_{X_c} \simeq \Crit(f_c).
  \end{equation}
\end{introthm}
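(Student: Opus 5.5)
The plan is to follow the same route as for Theorem \ref{thm:A}. The construction reduces to the criterion of \cite[Corollary 6.19]{BozecCalaqueScherotzke2024}: if the class $c \in H^1(S,\omega_S)$ defining the torsor $X_c \to S$ admits a de Rham primitive, then the derived moduli stack of compactly supported sheaves on $X_c$ is the critical locus of a function on $\mathfrak{M}_S$. Concretely, a primitive means a class $\alpha$ in the truncated de Rham complex with $\ddR \alpha = c$, e.g. $\alpha \in H^1(S, \Omega^1_S)$ or a hypercohomology class of $\Omega^{\geq 1}_S$ lifting $c$. The function $f_c$ is obtained by pairing $\alpha$ with the Atiyah/Chern character class of the universal complex, integrated along the compactly supported fibres of $\mathfrak{M}_S \times S \to \mathfrak{M}_S$. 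The $(-1)$-shifted symplectic identification $\mathfrak{M}_{X_c} \simeq \Crit(f_c)$ is then part of the output of that corollary. So the whole content of Theorem \ref{thm:C} is the following statement.

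\emph{Key proposition.} For $\pi\colon S\to\A^1$ proper with connected one-dimensional fibres and $S$ a smooth complex surface, the map $\ddR\colon H^1(S,\Omega^1_S) \to H^1(S,\omega_S)$ is surjective, or more precisely $c$ lifts along the relevant map in the truncated de Rham complex.

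To prove this, I would use the Hodge-to-de Rham comparison. Consider the long exact sequence associated to $0\to \omega_S[-2] \to \Omega^{\geq 1}_S \to \Omega^1_S[-1]\to 0$. This gives
\begin{equation}
  H^1(S,\Omega^1_S) \xrightarrow{\ddR} H^1(S,\omega_S) \to \mathbb{H}^3(S,\Omega^{\geq1}_S) \to H^2(S,\Omega^1_S).
\end{equation}
Since $\pi$ is proper with one-dimensional fibres, $S$ is a surface fibred over an affine curve. Hence coherent cohomology vanishes in degree $\geq 2$, so $H^2(S,\Omega^1_S)=0$. Surjectivity is then equivalent to $\mathbb{H}^3(S,\Omega^{\geq 1}_S)=0$. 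Using $0\to\Omega^{\geq1}_S\to\Omega^\bullet_S\to\O_S\to0$ together with $H^{\geq2}(S,\O_S)=0$, it suffices to show $\mathbb{H}^3(S,\Omega^\bullet_S)=0$ and that $H^2(S,\O_S)\to\mathbb{H}^3(S,\Omega^{\geq1})$ is trivially zero, which holds since the source vanishes. By algebraic de Rham comparison, $\mathbb{H}^3(S,\Omega^\bullet_S)=H^3(S^{\mathrm{an}},\C)$. Because $\pi$ is proper onto the contractible $\A^1$ with central fibre $S_0$, $S$ deformation-retracts onto a fibre, e.g. by proper base change and the homotopy invariance $H^*(S)\cong H^*(\pi^{-1}(\text{disc}))\cong H^*(S_0)$. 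A (possibly singular, reducible) projective curve has $H^3=0$, so $H^3(S,\C)=0$ and the proposition follows.

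The main obstacle is precisely this vanishing together with checking that the form of primitive demanded by \cite[Corollary 6.19]{BozecCalaqueScherotzke2024} matches the class produced above. One subtlety is that $R\pi_*\C$ is not locally constant over the singular fibres. The clean argument is instead that $H^*(S,\C)=H^*(\A^1,R\pi_*\C)$, where $R\pi_*\C$ is constructible with stalks $H^*(S_t)$ vanishing in degree $3$ by proper base change. Then a hypercohomology spectral sequence on $\A^1$ yields $H^3(S)$ as an extension of $H^1(\A^1,R^2\pi_*\C)$ and $H^0(\A^1,R^3\pi_*\C)=0$. The term $H^1(\A^1,R^2\pi_*\C)$ still needs to vanish, and I would handle it by the retraction of $S$ onto a single fibre, valid for a proper map over a disc containing all critical values after shrinking, which is up to homotopy all of $\A^1$. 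With this lifting in hand, Theorem \ref{thm:C} follows directly from \cite[Corollary 6.19]{BozecCalaqueScherotzke2024}, exactly as in Theorem \ref{thm:A}.
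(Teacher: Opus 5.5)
Your overall route matches the paper's. You reduce to surjectivity of $\ddR\colon H^1(S,\Omega^1_S)\to H^1(S,\omega_S)$. You then use vanishing of coherent $H^{\geq 2}$ on $S$ to embed the cokernel into $H^3_{\mathrm{dR}}(S)\cong H^3(S^{\mathrm{an}},\C)$; your two long exact sequences for the stupid filtration are correct and equivalent to the paper's Hodge--de Rham spectral sequence argument. Finally the primitive $c'\in H^1(S,\Omega^1_S)\subset\HH_0(\QCoh(S))$ goes into \cite[Corollary 6.19]{BozecCalaqueScherotzke2024} and Theorem \ref{thm:km cy completion}, exactly as in Corollary \ref{cor:critical locus}.

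The gap is in the step you yourself flagged as the main obstacle: the vanishing of $H^3(S,\C)$, equivalently of $H^1(\A^1,R^2\pi_*\C)$. Both times you appeal to a retraction of $S$ onto a single fibre. This is false as soon as $\pi$ has more than one singular fibre. A proper map over a small disc retracts onto the central fibre only when the disc contains a single critical value. Enlarging the disc to contain all critical values gives $\pi^{-1}(D)\simeq S$, but not a fibre. For example, blow up $\mathbb{P}^1\times\A^1$ at two points lying in different fibres: then $b_2(S)=3$, while every fibre has $b_2\leq 2$. So $H^*(S)\cong H^*(S_0)$ fails, and your argument does not prove $H^1(\A^1,R^2\pi_*\C)=0$, even though the statement is true. (Separately, you use $H^2(\A^1,R^1\pi_*\C)=0$ implicitly; this holds by Artin vanishing.)

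The paper handles this term sheaf-theoretically:
\begin{itemize}
\item Since $S$ and $\A^1$ are smooth, $\pi^!\C_{\A^1}\cong\C_S[2]$, which gives a trace map $R^2\pi_*\C\to\C_{\A^1}$.
\item This map is an isomorphism over the smooth locus and surjective on stalks at singular fibres, since a component class maps to its multiplicity.
\item Its kernel $\mathcal K$ is therefore supported on the finitely many reducible fibres.
\item Hence $H^1(\A^1,\C_{\A^1})=0=H^1(\A^1,\mathcal K)$, and the long exact sequence gives $H^1(\A^1,R^2\pi_*\C)=0$.
\end{itemize}

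Alternatively, you can keep your local retraction but use it correctly. Retract $S$ onto the preimage of a star-shaped graph joining a base point to each critical value $t_i$. Then glue the local pieces $\pi^{-1}(D_i)\simeq S_{t_i}$ along copies of a smooth fibre $F$ using Mayer--Vietoris. Since $H^3(S_{t_i})=0$ and the specialization $H^2(S_{t_i})\to H^2(F)\cong\C$ is nonzero (the fibre cycle $\sum m_j[C_j]$ is nonzero), hence surjective, each gluing step injects $H^3$ into that of the previous stage. Thus $H^3(S,\C)=0$.
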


Theorem \ref{thm:C} is not used in the proof of Theorem \ref{thm:B}. Its role is to show that the global critical chart mechanism is not special to surfaces of the form $\tot_C(L)$. In the curve fibration setting, the surjectivity of the de Rham differential, which yields the global critical chart, is proved by combining the Hodge--de Rham spectral sequence with the vanishing of $H^3_{\mathrm{dR}}(S)$. This follows from the topology of a proper curve fibration over the contractible base $\A^1(\C)$. \\

We now outline the proof of Theorem \ref{thm:B}. 
By Theorem \ref{thm:A}, the global critical chart identifies the derived moduli stack of sheaves on $X_c$ with the critical locus of the function $f_c$ on the moduli stack $\mathfrak{M}_Y$ on the surface $Y = \tot_C(L)$. The closed embedding of the critical locus is induced by pushforward along the affine torsor map 
\begin{equation}
\label{eq:196}
  q\colon X_c \to Y.
\end{equation}
Thus, the threefold vanishing-cycle sheaf can be compared to the vanishing cycles of the explicit function $f_c$ on $\mathfrak{M}_Y$.

Identifying compactly supported sheaves on $Y$ of class $r[C]$ with $L$-valued Higgs bundles $(E,\theta)$ on $C$, the function $f_c$ factors through the Hitchin morphism 
\begin{equation}
\label{eq:150}
  h_Y\colon \mathfrak{M}_Y^{\mathrm{ss}}(r,m) \to B_Y\coloneq \bigoplus_{i=1}^r H^0(C,L^{\otimes i}),
\end{equation}
so we may write 
\begin{equation}
\label{eq:197}
  f_c = \tilde{f}_c\circ h_Y
\end{equation}
on the semistable locus.  Maulik--Shen then implies that the direct image 
\begin{equation}
\label{eq:151}
  h_{Y,*} \IC_{M_Y^{\mathrm{ss}}(r,m)}
\end{equation}
is independent of $m$. Since vanishing cycles commute with proper pushforward, the same is true after applying $\phi_{\tilde{f}_c}$.

The BPS decomposition theorem then identifies the BPS sheaf obtained from the threefold critical locus with the first perverse cohomology of the vanishing-cycle direct image on the surface side. Therefore,
\begin{equation}
\label{eq:152}
  h_{X_c,*} \phi_{M_{X_c}^{\mathrm{ss}}(r,m)}, \qquad h_{X_c}\coloneq h_Y\circ \iota
\end{equation}
is independent of $m$.

It remains to pass from $h_{X_c}$ to the Hilbert--Chow morphism on $X_c$. 
We prove that every compact curve in $X_c$ is uniquely determined by its image in $Y$. Hence, pushforward of cycles along $q$ induces a finite morphism 
\begin{equation}
\label{eq:155}
  b\colon S_c \to B_Y, \qquad S_c\coloneq \operatorname{im}(\HC),
\end{equation}
and the diagram 
\begin{equation}
\label{eq:156}
\begin{tikzcd}
  M_{X_c}^{\mathrm{ss}}(r,m) \arrow[r,"\iota"] \arrow[d,"\HC" swap] & M_Y^{\mathrm{ss}}(r,m) \arrow[d,"h_Y"] \\ S_c \arrow[r,"b"] & B_Y
\end{tikzcd}
\end{equation}
commutes. Since $b$ is finite, Euler characteristics of the cohomology sheaves of $\HC_{*} \phi_{M_{X_c}^{\mathrm{ss}}(r,m)}$ can be computed after applying $b_{*}$. This reduces the final numerical statement to the already proven $\chi$-independence of 
\begin{equation}
\label{eq:157}
  h_{X_c,*} \phi_{M_{X_c}^{\mathrm{ss}}(r,m)}.
\end{equation}

\subsection*{Relation to Other Work}
Bousseau formulated a cohomological $\chi$-independence conjecture for one-dimensional sheaves on surfaces, and Maulik--Shen proved it for sheaves in ample classes on toric del Pezzo surfaces and for $L$-valued Higgs bundles when $\deg L>2g(C)-2$ \cite{Bousseau2022,MaulikShen2023}. Their theorem refines earlier degree-independence phenomena for Higgs moduli spaces, including the Hausel--Thaddeus mirror-symmetry picture and its refinements by Mellit and Groechenig--Wyss--Ziegler \cite{HauselThaddeus2003,Mellit2020,GroechenigWyssZiegler2020}.

The BPS decomposition formalism has been extended in the general setting of symmetric stacks by Bu--Davison--Ib\'a\~nez N\'u\~nez--Kinjo--P\u{a}durariu and Hennecart--Kinjo, giving a conceptual framework for the BPS summands and integrality decompositions appearing in cohomological Donaldson--Thomas theory \cite{BuDavisonIbanezNunezKinjoPadurariu2025,HennecartKinjo2025}. There are also complementary $p$-adic and categorical perspectives, including the $p$-adic BPS invariants of Carocci--Orecchia--Wyss and the quasi-BPS categories of P\u{a}durariu--Toda \cite{CarocciOrecchiaWyss2024,PadurariuToda2023,PadurariuToda2025,PadurariuToda2026}.
Moreover, Davison--Hennecart--Kinjo--Schiffmann--Vasserot have shown $\chi$-independence for quasi-projective symplectic surfaces \cite{DavisonHennecartKinjoSchiffmannVasserot2026}.

The derived geometry tools behind Theorems \ref{thm:A} and \ref{thm:C} comes from shifted symplectic geometry and Calabi--Yau categories. Pantev--To\"en--Vaqui\'e--Vezzosi construct shifted symplectic structures on derived moduli stacks, including the $(-1)$-shifted symplectic structures arising on moduli of objects in Calabi--Yau threefold categories \cite{PantevToenVaquieVezzosi2013}. Brav--Dyckerhoff formulate the corresponding statement for Calabi--Yau structures on dg categories and moduli of objects \cite{BravDyckerhoff2019}, \cite{BravDyckerhoff2021}. Keller introduces deformed Calabi--Yau completions of dg categories \cite{Keller2011}. Bozec--Calaque--Scherotzke identify moduli of objects in deformed Calabi--Yau completions with relative or twisted critical loci in a form suited to dimensional reduction \cite{BozecCalaqueScherotzke2024}. Kinjo--Masuda then give the geometric interpretation: deformed Calabi--Yau completions of categories of sheaves on a smooth variety correspond to categories of sheaves on total spaces of torsors of the canonical line bundle \cite{KinjoMasuda2024}.
Theorems \ref{thm:A} and \ref{thm:C} are obtained by combining these inputs with the existence of the de Rham primitives in the two geometric situations of the total space of a line bundle and curve fibrations over $\A^1$.

\subsection*{Plan of the Paper}

Section \ref{sec:preliminaries} recalls the Higgs-theoretic, d-critical, vanishing-cycle, mixed-Hodge-module, and Hochschild homology background used later. Section \ref{sec:global critical chart} contains the proof of the global critical chart theorems, namely Theorem \ref{thm:A} (= Theorem \ref{thm:global critical chart}) for warped local curves and Theorem \ref{thm:C} (= Corollary \ref{corollary:global critical chart curve fibration}) for curve fibrations over $\A^1$. Section \ref{sec:chi-independence} contains the Hitchin/Hilbert--Chow comparison and the proof of Theorem \ref{thm:B} (= Corollary \ref{cor:chi-independence}).

\subsection*{Notation and Conventions}

Unless specified otherwise, $k$ is a field of characteristic $0$. All statements pertaining to sheaves of vanishing cycles, mixed Hodge modules, or BPS sheaves are over the complex numbers. Throughout, $\mathcal{H}^i$ denotes perverse cohomology.
Subscripts and superscripts are omitted when the context prevents ambiguity. In particular, the symbols $B_Y, h_Y, h_{X_c}, \HC, S_c,$ and $b$ are always used after the class $c$, the rank $r$, and the Euler characteristic have been fixed. Thus, for example, when an Euler characteristic $m$ is replaced by $m' $, the same symbols denote the corresponding objects for the new Euler characteristic.

\subsection*{Acknowledgements}

The author would like to thank Tony Pantev for many helpful discussions and his invaluable guidance that helped shape this work. The author is partially supported by NSF FRG grant DMS-2244978 and NSF/BSF grant DMS-2200914.

\section{Preliminaries}\label{sec:preliminaries}

\subsection{Set-Up}

Let $k$ be a field of characteristic $0$ and let $C$ be a smooth projective curve over $k$. Let $L$ be a line bundle on $C$ and  set
\begin{equation}
\label{eq:45}
  \pi\colon  Y\coloneq \tot_C(L) = \Spec_C(\Sym L^{\vee})\to C.
\end{equation}
For a class $c \in H^1(Y,\omega_Y)$, let $q\colon X_c \to Y$ be the total space of the $\omega_Y$-torsor corresponding to $c$. The morphism $q$ is affine and $X_c$ is Zariski locally isomorphic to $\tot_Y(\omega_Y)$ over $Y$. If $c = 0$, then $X_c$ is isomorphic to $\tot_Y(\omega_Y)$ as an $\omega_Y$-torsor.

For any smooth variety $Z$, $\mathfrak{M}_Z\coloneq \mathfrak{M}_{\QCoh(Z)}$ denotes an appropriate version of the derived moduli stack of objects in the stable $\infty$-category $\QCoh(Z)$; it parametrises compactly supported perfect complexes on $Z$. If it exists, the good moduli space of $\mathfrak{M}_Z$ is denoted by $M_Z$ and the good moduli morphism by
\begin{equation}
\label{eq:162}
  p\colon \mathfrak{M}_Z \to M_Z.
\end{equation}

\begin{remark}\label{rem:factor-through-good-moduli}
  By the universal property of good moduli spaces, all morphisms from the stack to a scheme used below factor uniquely through the good moduli space, see \cite[Theorem 4.16.(iv)]{Alper2013}. By abuse of notation we frequently define maps on moduli spaces by providing the assignment for the moduli stack and omitting the factorisation.  
\end{remark}

\subsection{Hitchin, Hilbert--Chow, and Gopakumar--Vafa inputs}

\subsubsection{$L$-Valued Higgs Bundles}\label{sec:twisted higgs bundles}

For $r \in \Z_{>0}$ and $m \in \Z$, let 
\begin{equation}
\label{eq:163}
  \mathfrak{M}_Y^{\mathrm{ss}}(r,m) 
\end{equation}
be the derived moduli stack of compactly supported Gieseker semistable sheaves $G$ on $Y$ of pure dimension $1$, satisfying 
\begin{equation}
\label{eq:170}
  [G] = r[C] \in A^{1}(Y) 
\end{equation}
and  $\chi(G) = m$. Equivalently, $\mathfrak{M}_Y^{\mathrm{ss}}(r,m)$ is the moduli stack of slope-semistable $L$-valued Higgs sheaves $(E,\theta)$ on $C$, where $E$ is locally free 
\begin{equation}
\label{eq:115}
  \rk(E) = r, \qquad \chi(E) = m, \qquad   \theta\in H^0(C, \End E \otimes L).
\end{equation}
By the spectral correspondence, compactly supported pure one-dimensional sheaves on $Y$ of class $r[C]$ are equivalent to rank $r$ $L$-valued Higgs bundles on $C$, compatible with Gieseker semistability.

Note that Gieseker and slope-stability are the same for Higgs bundles on $C$ but they are not the same for the pure sheaves on $Y$. So we can also view the spectral correspondence as a dictionary translating information about Gieseker semistable pure compactly supported sheaves of class $r[C]$ and Euler characteristic $m$ to information about slope-semistable $L$-valued Higgs bundles of rank $r$ and Euler characteristic $m$.

The \emph{Hitchin base} is the vector space 
\begin{equation}
\label{eq:43}
  B_Y\coloneq \bigoplus_{k=1}^r H^0(C,L^{\otimes k}).
\end{equation}
Denote by $M_Y^{\mathrm{ss}}(r,m)$ the good moduli space of pure sheaves
\begin{equation}
\label{eq:44}
  p\colon \mathfrak{M}_Y^{\mathrm{ss}}(r,m) \to M_Y^{\mathrm{ss}}(r,m).
\end{equation}
For a geometric point $(E,\theta)$ of $\mathfrak{M}^{\mathrm{ss}}_Y(r,m)$, define 
\begin{equation}
\label{eq:146}
    \tilde{h}_Y(E,\theta) \coloneq (\tr(\theta), \tr(\theta^2),\ldots,\tr(\theta^r)) \in B_Y,
\end{equation}
where $\tr(\theta^i) \in H^0(C, L^{\otimes i})$. This gives a morphism $\tilde{h}_Y\colon \mathfrak{M}^{\mathrm{ss}}_Y(r,m) \to B_Y$. By Remark \ref{rem:factor-through-good-moduli}, $\tilde{h}_Y$ factors uniquely as
\begin{equation}
\label{eq:42}
  \mathfrak{M}_Y^{\mathrm{ss}}(r,m) \xrightarrow{p} M_Y^{\mathrm{ss}}(r,m) \xrightarrow{h_Y} B_Y.
\end{equation}
The morphism $h_Y$ is the \emph{Hitchin morphism}.
There is an isomorphism 
\begin{equation}
\label{eq:48}
  \rho_Y\colon B_Y \to   \Chow_{r[C]}(Y),\qquad a \mapsto [C_a]
\end{equation}
where $C_a \subset Y$ is the spectral curve determined by $a$ \cite[Section 3]{BeauvilleNarasimhanRamanan1989}.

\subsubsection{Warped Local Curves}\label{sec:warped local curves}
Fix $r \in \Z_{>0}$ and $m \in \Z$. Let
\begin{equation}
\label{eq:40}
  \mathfrak{M}_{X_c}^{\mathrm{ss}}(r,m)
\end{equation}
be the Artin stack of compactly supported, pure one-dimensional, slope-semistable sheaves $F$ on $X_c$ satisfying 
\begin{equation}
\label{eq:165}
  (\pi\circ q)_{\#}[F] = r[C], \qquad \chi(F) = m.
\end{equation} 
Its good moduli space is denoted by
\begin{equation}
\label{eq:41}
  p\colon \mathfrak{M}_{X_c}^{\mathrm{ss}}(r,m) \to M_{X_c}^{\mathrm{ss}}(r,m).
\end{equation}
The analogue of the Hitchin morphism on $X_c$ is the \emph{Hilbert--Chow morphism}. We write $\Chow_{r[C]}(X_c)$ for the \emph{Chow scheme} of compact one-cycles $\Gamma$ on $X_c$ satisfying 
\begin{equation}
\label{eq:166}
  (\pi\circ q)_{\#}[\Gamma] = r[C].
\end{equation}
Corollary \ref{corollary:existence curve class} shows that $\Chow_{r[C]}(X_c)$ is non-empty assuming a mild condition on $c$. 
Then 
\begin{equation}
\label{eq:207}
  \HC\colon M_{X_c}^{\mathrm{ss}}(r,m) \to \Chow_{r[C]}(X_c)
\end{equation}
sends a sheaf to its fundamental support cycle. 

Explicitly, for a sheaf $F$, write the support as a union of irreducible one-dimensional components
\begin{equation}
\label{eq:111}
  \mathrm{supp}(F) = \bigcup_i Z_i.
\end{equation}
If $\eta_i$ is a generic point of $Z_i$, define 
\begin{equation}
\label{eq:200}
  n_i \coloneq \length_{\O_{X_c,\eta_i}}(F_{\eta_i}).
\end{equation}
Then
\begin{equation}
\label{eq:68}
  \HC(F) \coloneq \sum_i n_i[Z_i] \in  \Chow_{r[C]}(X_c).
\end{equation}

There is a pushforward morphism of Chow spaces
\begin{equation}
\label{eq:53}
  q_{\#}\colon \Chow_{r[C]}(X_c) \to \Chow_{r[C]}(Y)
\end{equation}
given on cycles by 
\begin{equation}
\label{eq:167}
  \sum_i n_i[Z_i] \mapsto \sum_i n_i \cdot \deg (Z_i \to q(Z_i)) \cdot [q(Z_i)].
\end{equation}

Let 
\begin{equation}
\label{eq:70}
  S_c\hookrightarrow \Chow_{r[C]}(X_c)
\end{equation}
be the scheme-theoretic image of the Hilbert--Chow morphism for the fixed triple $(c,r,m)$. We denote the induced morphism by 
\begin{equation}
\label{eq:215}
  \HC\colon M_{X_c}^{ss}(r,m)\longrightarrow S_c.
\end{equation}
We show in Section \ref{sec:semistability} that pushforward of sheaves along the affine morphism $q$ preserves the relevant semistable loci. Thus, it induces a morphism on good moduli spaces
\begin{equation}
\label{eq:71}
  \iota\colon M_{X_c}^{\mathrm{ss}}(r,m) \to M_Y^{\mathrm{ss}}(r,m).
\end{equation}
Define
\begin{equation}
\label{eq:72}
  h_{X_c} \coloneq h_Y\circ \iota \colon M_{X_c}^{\mathrm{ss}}(r,m) \to B_Y.
\end{equation}
For every sheaf $F$ in $\mathfrak{M}_{X_c}^{\mathrm{ss}}(r,m)$, one has 
\begin{equation}
\label{eq:168}
  q_{\#} \HC(F) =\rho_Y(h_{X_c}(F)).
\end{equation}
Hence,  $q_{\#}S_c \subset \operatorname{im}(\rho_Y)$.
Composing $q_{\#}$ with the inverse of $\rho_Y$, we obtain  
\begin{equation}
\label{eq:50}
  b\coloneq \rho_Y^{-1}  \circ q_{\#}|_{S_c}\colon S_c \to B_Y.
\end{equation}
The resulting diagram commutes:
\begin{equation}
\label{eq:51}
\begin{tikzcd}
  M_{X_c}^{\mathrm{ss}} (r,m) \arrow[r,"\iota"] \arrow[d,"\HC" swap] \arrow[dr, "h_{X_c}"]& M_Y^{\mathrm{ss}}(r,m) \arrow[d,"h_Y"] \\ S_c \arrow[r,"b"] & B_Y. 
\end{tikzcd}
\end{equation}

\subsection{D-Critical Structures and Vanishing Cycles}
Let $k = \C$.
The BPS sheaf $\phi_M$ is built from the vanishing cycles of an oriented d-critical stack. We recall the necessary background information on d-critical charts and vanishing cycles to explain the construction.

\subsubsection{D-Critical Charts}

Let $X$ be a $\C$-scheme locally of finite type. Joyce constructs in  \cite[Theorem 2.1]{Joyce2015}  a sheaf $\mathcal{S}_X$ with the following local description. For a Zariski open subscheme $R \subset X$ and a closed embedding $i \colon R \hookrightarrow U$  into a smooth $\C$-scheme, let $I_{R,U} \subset i^{-1}\O_U$ be the ideal sheaf of $R$ in $U$. There is an exact sequence
\begin{equation}
\label{eq:92}
  0 \to \mathcal{S}_X|_R \xrightarrow{\iota_{R,U}} i^{-1} \O_U / I^2_{R,U} \xrightarrow{\ddR} i^{-1}\Omega^1_U/ (I_{R,U} \cdot i^{-1} \Omega^1_U),
\end{equation}
where the last morphism is induced by the de Rham differential. 
The natural morphism $\mathcal{S}_X \to \O_X$, obtained by locally composing $\iota_{R,U}$ with $i^{-1}\O_U/I^2_{R,U} \twoheadrightarrow \O_R$, defines a subsheaf
\begin{equation}
\label{eq:93}
  \mathcal{S}^0_X \coloneq \ker \left( \mathcal{S}_X \to \O_X \twoheadrightarrow \O_{X_{\red}}\right) \subset \mathcal{S}_X.
\end{equation}

\begin{definition}
  A \emph{d-critical structure} on $X$ is a section $s \in H^0(X,\mathcal{S}_X^0)$ such that every point of $X$ has a Zariski open neighbourhood  $R \subset X$ for which there exist a smooth $\C$-scheme $U$, a closed embedding $i\colon R \hookrightarrow U$, and a regular function $f\colon U \to \A^1$ satisfying 
  \begin{equation}
  \label{eq:8}
    i(R) = \Crit(f) \subset U
  \end{equation}
  as closed subschemes and
  \begin{equation}
    \label{eq:94}
    \iota_{R,U}(s|_R) = i^{-1}(f) + I^2_{R,U} \qquad \text{in} \quad i^{-1}\O_U / I^2_{R,U}.
  \end{equation}
  In particular, $f|_{R_{\red}} = 0$. The quadruple $(R,U,f,i)$ is called a \emph{d-critical chart} on $(X,s)$ and $(X,s)$ is called a \emph{d-critical scheme}.
\end{definition}

\begin{remark}
  If $q\colon X_1 \to X_2$ is a smooth morphism of schemes, Joyce's functoriality provides a natural morphism
  \begin{equation}
  \label{eq:39}
    q^{*}\colon q^{-1} \mathcal{S}_{X_2}^0 \to \mathcal{S}_{X_1}^0.
  \end{equation}
  For $s_2 \in H^0(X_2, \mathcal{S}^0_{X_2})$, write $q^{*}s_2 \coloneq q^{*}(s_2)$. If $(X_2,s_2)$ is a d-critical scheme, then $(X_1,q^{*}s_2)$ is a d-critical scheme. Conversely, if $q$ is also surjective and $(X_1,q^{*}s_2)$ is d-critical, then so is $(X_2,s_2)$. These pullbacks are compatible with composition.
\end{remark}

For an Artin stack $\mathfrak{X}$ locally of finite type over $\C$, Joyce defines a sheaf $\mathcal{S}^0_{\mathfrak{X}}$ on the lisse-\'etale site of $\mathfrak{X}$. On an object $t\colon T \to \mathfrak{X}$, where $T$ is a scheme and $t$ is smooth, it is given by $\mathcal{S}^0_T$, with transition morphisms induced by smooth pullback. Thus, a global section $s \in H^0(\mathfrak{X}, \mathcal{S}^0_{\mathfrak{X}})$ determines compatible sections 
\begin{equation}
\label{eq:14}
  s_T\coloneq t^{*}s \in H^0(T, \mathcal{S}^0_T)
\end{equation}
for all smooth morphisms $t\colon T \to \mathfrak{X}$.

\begin{definition}
  A \emph{d-critical Artin stack} is a pair $(\mathfrak{X},s)$ consisting of an Artin stack $\mathfrak{X}$ locally of finite type over $\C$ and a section $s \in H^0(\mathfrak{X}, \mathcal{S}^0_{\mathfrak{X}})$ such that $(T,s_T)$ is a d-critical scheme for every smooth morphism $t\colon T \to \mathfrak{X}$ from a scheme $T$. 
\end{definition}

\begin{remark}
  The d-critical structures used in this paper arise from classical truncations of $(-1)$-shifted symplectic derived Artin stacks. More precisely, if $\mathbf{\mathfrak{Z}}$ is a $(-1)$-shifted symplectic derived Artin stack and $\mathfrak{X} = t_0(\mathbf{\mathfrak{Z}})$, then $\mathfrak{X}$ carries a natural d-critical structure $s$ \cite[Theorem 3.18]{BenBassatBravBussiJoyce2015}. Furthermore, the line bundle $\det (\mathbb{L}_{\mathfrak{Z}})|_{ \mathfrak{X}_{\red}}$ depends only on the pair $( \mathfrak{X},s)$ and not on the full derived stack $\mathbf{\mathfrak{Z}}$. We denote this line bundle by $K^{\vir}_{\mathfrak{X},s}$. In fact, Joyce shows  that the line bundle $K^{\vir}_{\mathfrak{X},s}$ exists without assuming that $(\mathfrak{X},s)$ arises as a truncation.
  The precise statement can be summarised as follows.
\end{remark}

\begin{theorem}[{\cite[Theorems 2.28 and 2.56]{Joyce2015}}]
  Let $(X,s)$ be a d-critical scheme. There is a line bundle $K^{\vir}_{X,s}$ on $X_{\red}$, unique up to canonical isomorphism, such that every d-critical chart $(R,U,f,i)$ carries a natural isomorphism 
  \begin{equation}
  \label{eq:55}
  \kappa_{R,U}\colon K^{\vir}_{X,s}|_{R_{\red}} \xrightarrow{\sim} (i|_{R_{\red}})^{*} K^{\otimes 2}_U,
  \end{equation}
  where $K_U$ is the canonical bundle of the smooth scheme $U$.

  Let $(\mathfrak{X},s)$ be a d-critical Artin stack. There is likewise a line bundle $K^{\vir}_{\mathfrak{X},s}$ on the reduced substack $\mathfrak{X}_{\red}$, unique up to canonical isomorphism. It is characterised by the following property: For every smooth morphism $t\colon T \to \mathfrak{X}$ from a scheme, there is a natural isomorphism of line bundles on $T_{\red}$, 
  \begin{equation}
  \label{eq:67}
    \Gamma_t\colon t^{*}_{\red} K^{\vir}_{\mathfrak{X},s} \otimes \left( \det( \Omega^1_{T/ \mathfrak{X}})|_{T_{\red}}\right)^{\otimes 2} \xrightarrow{\sim} K^{\vir}_{T,s_T}.
  \end{equation}
  These isomorphisms are compatible with smooth morphisms between schemes over $\mathfrak{X}$ and with composition.
\end{theorem}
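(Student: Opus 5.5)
The statement is Joyce's theorem, so my plan is to reconstruct his argument: build $K^{\vir}_{X,s}$ locally from d-critical charts and glue. I would do the scheme case first and then deduce the stack case by smooth descent.

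\emph{Local construction and uniqueness.} For a d-critical chart $(R,U,f,i)$, set the local candidate to be $(i|_{R_{\red}})^{*}K_U^{\otimes 2}$. Uniqueness up to canonical isomorphism will follow once the gluing data are canonical and satisfy the cocycle condition. Indeed, any two line bundles equipped with compatible isomorphisms $\kappa_{R,U}$ are then identified chartwise, and these identifications agree on overlaps.

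\emph{Comparing two charts.} Given charts $(R,U,f,i)$ and $(S,V,g,j)$ and a point $x\in R\cap S$, I first reduce to the case of an embedding of charts. Joyce's comparison results let one pass, after shrinking, to a third chart $(R',W,h,k)$ that embeds into both. So suppose there is a closed embedding $\Phi\colon U\hookrightarrow V$ with $g\circ\Phi=f$ and $\Phi\circ i=j$. Locally one can choose coordinates so that $V\simeq U\times\A^n$ and
\begin{equation}
g = f\boxplus q,
\end{equation}
where $q$ is a nondegenerate quadratic form in the normal directions. The Hessian of $g$ along $R$ restricted to the normal bundle $N_{UV}$ is then a nondegenerate quadratic form $q_{UV}$ on $N_{UV}|_{R}$. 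This form gives an isomorphism
\begin{equation}
\det(N_{UV})^{\otimes 2}|_{R_{\red}}\simeq \O_{R_{\red}}.
\end{equation}
Combined with the normal sequence $K_V|_U\simeq K_U\otimes\det N_{UV}^{\vee}$, this yields
\begin{equation}
J_\Phi\colon (i^{*}K_U^{\otimes 2})|_{R_{\red}}\xrightarrow{\sim}(j^{*}K_V^{\otimes 2})|_{R_{\red}}.
\end{equation}
Squaring is what makes this canonical. The square root $\det N_{UV}$ is only determined up to sign, whereas the square depends only on $q_{UV}$. So $J_\Phi$ does not depend on the choice of coordinates.

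\emph{Cocycle condition (main obstacle).} The hard step is to check that $J_\Phi$ depends only on the germ of the chart morphism along $R_{\red}$. Concretely, two embeddings $\Phi,\Phi'$ agreeing on $R$ up to $I^2$ must give the same $J$, and for a composition one needs $J_{\Psi\circ\Phi}=J_\Psi\circ J_\Phi$. For the first, I would compute in local coordinates on $R_{\red}$. There the reduced ideal kills the first-order difference of the two embeddings, so it suffices to show that the induced maps on $\det(N)^{\otimes 2}$ agree. For the second, one uses multiplicativity of determinants in the short exact sequence of normal bundles $0\to N_{UV}\to N_{UW}\to N_{VW}|_U\to 0$, together with compatibility of the Hessian forms; the form $q_{UW}$ is block-triangular with respect to this sequence. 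Given these, the local line bundles descend along the Zariski cover by charts to a line bundle $K^{\vir}_{X,s}$ on $X_{\red}$ carrying the isomorphisms $\kappa_{R,U}$.

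\emph{Stacks.} For a d-critical Artin stack, first apply the scheme case to each smooth atlas $t\colon T\to\mathfrak{X}$. Then define the descent datum by
\begin{equation}
K^{\vir}_{T,s_T}\otimes(\det\Omega^1_{T/\mathfrak{X}})^{\otimes -2}.
\end{equation}
This requires a smooth-pullback compatibility: for a smooth morphism $\phi\colon T'\to T$,
\begin{equation}
K^{\vir}_{T',s_{T'}}\simeq\phi^{*}K^{\vir}_{T,s_T}\otimes(\det\Omega_{T'/T})^{\otimes 2}.
\end{equation}
I would prove it chartwise. Given a chart $U$ for $T$, a chart for $T'$ is obtained locally by a smooth $U'\to U$ with $f'=f\circ(U'\to U)$, and then $K_{U'}=K_U\otimes\det\Omega_{U'/U}$. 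The cocycle identities for compositions then give a descent datum on the lisse-\'etale site, producing $K^{\vir}_{\mathfrak{X},s}$ together with the isomorphisms $\Gamma_t$.
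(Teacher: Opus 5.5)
The paper gives no proof of this statement; it quotes it from Joyce. Your outline follows Joyce's strategy: local bundles $i^*K_U^{\otimes 2}|_{R_{\red}}$, transition maps $J_\Phi$ built from the Hessian form $q_{UV}$ on the normal bundle, and smooth descent with the $(\det\Omega^1_{T/\mathfrak{X}})^{\otimes 2}$ twist for stacks. The stack step is fine as sketched. Two steps of the scheme case, however, fail as written.

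\textbf{The chart comparison goes the wrong way.} Joyce's comparison theorem says that, after shrinking, both charts embed into a common third chart. A chart embedding into both need not exist Zariski locally. Minimal charts embed into any given chart, but two minimal charts need not be Zariski-locally isomorphic.

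For example, take $X=\Spec k[z]/(z^2)$ with $s$ given by the chart $(X,\A^1,z^3,i)$. Let $w$ be a function on an affine open $V$ of an elliptic curve $E$ with a simple zero at $p$, no other zeros, and $dw$ nowhere zero. Functoriality of $\mathcal{S}_X$ along the étale map $w\colon V\to\A^1$ (which satisfies $w\circ j=i$) shows that $(X,V,w^3,j)$ is also a critical chart for $s$. Both charts are one-dimensional, hence minimal. A chart embedding into both would then be, near the point, a smooth curve open both in $\A^1$ and in $E$, which is impossible.

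The repair is to take embeddings $\Phi\colon U\hookrightarrow W$ and $\Psi\colon V\hookrightarrow W$ and use $J_\Psi^{-1}\circ J_\Phi$ as the transition. You must then show this is independent of $(W,\Phi,\Psi)$. Comparing two such choices inside a fourth chart reduces this to showing that $J_\Phi$ does not depend on the embedding $\Phi$.

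\textbf{Your argument for that independence is incorrect.} Two chart embeddings $\Phi,\Phi'\colon U\hookrightarrow V$ agree on $R$, but in general not modulo $I^2$. Passing to $R_{\red}$ does not kill $d\Phi-d\Phi'$: it vanishes on the Zariski tangent spaces $T_xX$, but not on a complement where $\mathrm{Hess}\,f$ is nondegenerate. For instance, take $f=y^2$ on $\A^2$, $g=y^2+z^2$ on $\A^3$, $\Phi(x,y)=(x,y,0)$ and $\Phi'(x,y)=(x,\tfrac35 y,\tfrac45 y)$. Here $R=\{y=0\}$ is already reduced, yet the differentials differ along it. So an argument that only handles embeddings agreeing to first order does not cover the cases the gluing needs.

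The missing idea is Joyce's pointwise rigidification. For $x\in R$, the exact sequence $0\to T_xX\to T_xU\xrightarrow{\mathrm{Hess}_xf}T^*_xU\to T^*_xX\to 0$ gives a canonical isomorphism $\alpha_{x,U}\colon K_U^{\otimes 2}|_x\cong(\Lambda^{\mathrm{top}}T^*_xX)^{\otimes 2}$ that depends only on the chart. A linear-algebra check gives $\alpha_{x,V}\circ J_\Phi|_x=\alpha_{x,U}$. Hence $J_\Phi|_x$ is independent of $\Phi$ and of any auxiliary charts. Since an isomorphism of line bundles on the reduced scheme $R_{\red}$ is determined by its values at points, the transition maps are well defined and the cocycle condition, including composition, holds automatically. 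This is why $K^{\vir}_{X,s}$ lives only on $X_{\red}$.

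A smaller point: the normal form $V\simeq U\times\A^n$ with $g=f\boxplus q$ holds only étale locally, not Zariski locally. This is harmless, since $q_{UV}$ is defined intrinsically.
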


\begin{definition}
  An \emph{orientation} of a d-critical scheme or a d-critical Artin stack $(\mathfrak{X},s)$ is a line bundle $L$ on $\mathfrak{X}_{\red}$ together with an isomorphism 
  \begin{equation}
  \label{eq:97}
    o\colon L^{\otimes 2} \xrightarrow{\sim} K^{\vir}_{\mathfrak{X},s}.
  \end{equation}
  The triple $(\mathfrak{X},s,o)$ is then called an \emph{oriented d-critical stack}. If $t\colon T \to \mathfrak{X}$ is smooth, the induced orientation on the d-critical scheme $(T,s_T)$ has orientation line 
  \begin{equation}
  \label{eq:164}
    L_T\coloneq t^{*}_{\red}L \otimes \det (\Omega^1_{T/ \mathfrak{X}})|_{T_{\red}}.
  \end{equation}
  Its square is identified with $K^{\vir}_{T,s_T}$ by the composition 
  \begin{equation}
  \label{eq:190}
    L^{\otimes 2}_T \xrightarrow{t^{*}_{\red}\otimes \id} t^{*}_{\red} K^{\vir}_{\mathfrak{X},s} \otimes \left( \det (\Omega^1_{T/ \mathfrak{X}})|_{T_{\red}}\right)^{\otimes 2} \xrightarrow{\Gamma_t} K^{\vir}_{T,s_T}.
  \end{equation}
  For a smooth morphism $q\colon T_1 \to T_2$ over $\mathfrak{X}$, these induced orientation lines satisfy the compatibility 
  \begin{equation}
  \label{eq:206}
    L_{T_1} \cong q^{*}_{\red} L_{T_2}\otimes \det (\Omega^1_{T_1/T_2}) |_{T_{1,\red}}.
  \end{equation}
\end{definition}

\subsubsection{Vanishing Cycles}
Throughout this section, schemes and Artin stacks are defined over $\C$. 

\begin{definition}
  Let $U$ be a smooth complex algebraic variety and let $f\colon U \to \A^1$ be a regular. For $a \in \A^1$, write $U_a \coloneq f^{-1}(a)$. We denote by 
  \begin{equation}
  \label{eq:62}
    \phi_{f-a} \colon D^{b}_c(U,\Q) \to D^b_c(U_a,\Q)
  \end{equation}
  the perverse-normalised vanishing-cycle functor. Thus, $\phi_{f-a}$ sends perverse sheaves to perverse sheaves. In particular, 
  \begin{equation}
  \label{eq:102}
    \phi_{f-a} \left( \Q_U[\dim U]\right)
  \end{equation}
  is a perverse sheaf on $U_a$ supported on $\Crit(f) \cap U_a$.
  In the finite type algebraic situations considered below, the set of critical values of $f$ is finite. For $P \in D^b_c(U,\Q)$, we use the notation 
  \begin{equation}
  \label{eq:118}
    \phi_f(P)\coloneq \bigoplus_{a \in f(\Crit(f))} \phi_{f-a}(P),
  \end{equation}
  where each summand is regarded as a complex on $\Crit(f)$ and is extended by zero away from $\Crit(f) \cap U_a$. To ease notation, we use the shorthand 
  \begin{equation}
  \label{eq:119}
    \phi_f\coloneq \phi_f \left( \Q_U[\dim U]\right) \in \Perv(\Crit(f)).
  \end{equation}
  When $0$ is the only critical values, this agrees with the usual notation for the vanishing cycles of $f$.
\end{definition}

\begin{definition}
  Let $\mathcal{U}$ be a smooth complex Artin stack locally of finite type. An object $P \in \Perv( \mathcal{U})$ is given by perverse sheaves $P_T \in \Perv(T)$ for all smooth morphisms $t\colon T \to \mathcal{U}$ from schemes, together with compatible isomorphisms 
  \begin{equation}
  \label{eq:120}
    q^{*} P_{T_2}[\dim q] \xrightarrow{\sim} P_{T_1}
  \end{equation}
  for every smooth morphism $q\colon (t_1\colon T_1 \to \mathcal{U}) \to (t_2\colon T_2 \to \mathcal{U})$.

  Let $f\colon \mathcal{U} \to \A^1$ be a regular function. For $P \in \Perv(\mathcal{U})$, the objects 
  \begin{equation}
  \label{eq:126}
    \phi_{f\circ t} (P_T)
  \end{equation}
  are identified. They therefore define a perverse sheaf $\phi_f(P) \in \Perv(\Crit(f))$. For a smooth stack, we write
  \begin{equation}
  \label{eq:229}
    \phi_f\coloneq \phi_f \left(\Q_{\mathcal{U}} [\dim \mathcal{U}]\right).
  \end{equation}
\end{definition}

\begin{theorem}[{\cite[Theorem 6.9]{BravBussiDupontJoyceSzendroi2015}}]\label{thm:dc-space-vc}
  Let $(X,s,o)$ be an oriented d-critical space with orientation $L$.
  There is a perverse sheaf $\phi_{X,s,o} \in \Perv(X)$, natural up to canonical isomorphism, with the following local description. 
  Let $(R,U,f,i)$ be a d-critical chart on $(X,s)$. The chart gives an isomorphism 
  \begin{equation}
  \label{eq:228}
    \iota_{R,U,f,i}\colon K^{\vir}_{X,s}|_{R_{\red}}\xrightarrow{\sim} i^*K_U^{\otimes2}|_{R_{\red}}.
  \end{equation}
  Let $Q_{R,U,f,i} \to T$ be the principal $\Z_2$-bundle whose local sections  are isomorphisms 
  \begin{equation}
  \label{eq:52}
    \alpha\colon L|_{R_{\red}}\xrightarrow{ \sim }i^*K_U|_{R_{\red}}
  \end{equation}
  satisfying 
  \begin{equation}
  \label{eq:56}
    \alpha^{\otimes 2}=\iota_{R,U,f,i}\circ o|_{R_{\red}}.
  \end{equation}
  Then there is a natural local identification 
  \begin{equation}
  \label{eq:59}
    \phi_{X,s,o} |_R \xrightarrow{\sim} i^{*}\phi_f\otimes_{\Z_2} Q_{R,U,f,i}.
  \end{equation}
  Here $\otimes_{\Z_2} Q_{R,U,f,i}$ denotes twisting by the rank $1$ local system associated to the sign representation of $\Z_2$. These local identifications are compatible under embeddings of d-critical charts and therefore glue to the global perverse sheaf $\phi_{X,s,o}$.
\end{theorem}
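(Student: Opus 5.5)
The plan is to construct $\phi_{X,s,o}$ by gluing the locally defined perverse sheaves $i^*\phi_f\otimes_{\Z_2}Q_{R,U,f,i}$. Perverse sheaves form a stack on the Zariski (indeed étale) site. It therefore suffices to do three things: produce isomorphisms between the local models on overlaps of charts, check the cocycle condition on triple overlaps, and verify that the result is independent of choices up to canonical isomorphism. The local models are well defined because $i(R)=\Crit(f)$, so $\phi_f$ is supported on $i(R)$ and $i^*\phi_f$ is a perverse sheaf on $R$. The $\Z_2$-twist is likewise well defined, since $Q_{R,U,f,i}$ is a principal $\Z_2$-bundle on $R_{\red}$ and local systems on $R_{\red}$ and $R$ agree.

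The first step is the comparison of charts. Given two charts $(R,U,f,i)$ and $(S,V,g,j)$ and a point $x\in R\cap S$, Joyce's theory of d-critical charts provides, after shrinking, a third chart $(T,W,h,k)$ together with \emph{embeddings} of charts $\Phi\colon U\to W$, $\Psi\colon V\to W$ with $h\circ\Phi=f$ and $h\circ\Psi=g$. So it suffices to treat an embedding $\Phi\colon(R,U,f,i)\hookrightarrow(S,V,g,j)$. For such an embedding, Joyce shows that étale locally one can choose coordinates with $V\simeq U\times\A^n$ and $g=f\boxplus (z_1^2+\dots+z_n^2)$. Here the quadratic form is nondegenerate because $\Crit(g)=\Crit(f)$ scheme-theoretically. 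The Thom--Sebastiani theorem then gives
\begin{equation}
\phi_g\simeq \phi_f\boxtimes\phi_{z_1^2+\dots+z_n^2}.
\end{equation}
The second factor is a rank one local system supported at the origin, trivialised up to sign by a choice of square root of $\det$ of the Hessian. These sign ambiguities are exactly the ones recorded by comparing $Q_{R,U,f,i}$ with $Q_{S,V,g,j}$: the isomorphism $\kappa$ for the two charts differs by the factor $\det(N_{UV})^{\otimes 2}$, where the nondegenerate Hessian identifies $N_{UV}$ with its dual. Tensoring with $Q$ therefore cancels the sign and yields a canonical isomorphism
\begin{equation}
\Theta_\Phi\colon i^*\phi_f\otimes Q_{R,U,f,i}\xrightarrow{\sim} j^*\phi_g\otimes Q_{S,V,g,j}\big|_R.
\end{equation}

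The main obstacle is showing that $\Theta_\Phi$ is independent of the étale coordinates chosen and is compatible with composition of embeddings, $\Theta_{\Psi\circ\Phi}=\Theta_\Psi\circ\Theta_\Phi$. This compatibility is what makes the cocycle condition hold and makes the isomorphism obtained via the auxiliary chart $W$ independent of $W$. To handle it, I would first show that two choices of coordinate splittings differ by a family over a connected space of automorphisms fixing $\Crit$ to first order. I would then argue that the induced automorphism of the rank one object $\phi_{z_1^2+\dots+z_n^2}\otimes Q$ is locally constant, and hence trivial because it is trivial at the identity. This uses the fact that $\phi_f$ on $\Crit(f)$ depends only on $f$ modulo $I^2$, up to homotopy. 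Compatibility with composition then reduces to the multiplicativity of Thom--Sebastiani and of determinants of Hessians under orthogonal sums.

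Once these steps are in place, the local perverse sheaves glue to $\phi_{X,s,o}$. The gluing is unique up to unique isomorphism because the gluing data are canonical, which gives naturality. The local identification in the statement holds by construction.
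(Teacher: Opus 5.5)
The paper only quotes this result, so the comparison is with the proof in \cite{BravBussiDupontJoyceSzendroi2015}. Your architecture is theirs. You glue the local models $i^*\phi_f\otimes_{\Z_2}Q_{R,U,f,i}$ using that perverse sheaves form a stack. You reduce comparisons of charts to embeddings via Joyce's comparison theorem, put an embedding in the normal form $g=f\boxplus(z_1^2+\dots+z_n^2)$, and build the stabilisation isomorphism $\Theta_\Phi$ by Thom--Sebastiani.

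\textbf{The gap.} It lies in the step you flag as the main obstacle: your argument there does not work, and the idea that does is missing.

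Two admissible coordinate choices for the same embedding differ, locally near $\Crit(g)$, by an automorphism $\Psi$ with $g\circ\Psi=g$ and $\Psi|_{\Crit(g)}=\id$. To act on $\phi_g$ at all, $\Psi$ must preserve $g$ exactly, not merely fix $\Crit(g)$ to first order. This group of symmetries is not connected. Already for $V=\A^1$, $g=z^2$, the symmetry $z\mapsto -z$ swaps the two points of the Milnor fibre and acts on $\phi_{z^2}$ (a copy of $\Q$ at the origin) by $-1$. So ``locally constant and trivial at the identity'' can at best handle the identity component.

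On the other component you must compute the action on $\phi_g$ of an arbitrary nonlinear $\Psi$, which mixes the $U$-directions with the normal directions. The linear $O(n)$ changes, where Thom--Sebastiani makes the sign visible, do not cover this. Even connectedness of the identity component through such symmetries, and homotopy invariance of the induced action, would need proof.

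\textbf{The missing ingredient} is the symmetry theorem of Brav--Bussi--Dupont--Joyce--Szendr\H{o}i. If $g\circ\Psi=g$ and $\Psi|_{\Crit(g)}=\id$, then $\det\bigl(d\Psi|_{\Crit(g)^{\red}}\bigr)$ is a locally constant sign, and $\Psi$ acts on $\phi_g$ by multiplication by that sign. Now $\Psi$ acts by the same sign on orientations of $(N_{UV},q_{UV})$, hence on the comparison between $Q_{R,U,f,i}$ and $Q_{S,V,g,j}$. That is what makes $\Theta_\Phi$ independent of the coordinates and of the choice of embedding between two given charts, and compatible with composition. These properties are what give the cocycle condition and independence of the auxiliary chart $W$.

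\textbf{A false input.} The fact you lean on, that $\phi_f$ depends only on $f$ modulo $I^2$ ``up to homotopy'', fails in the canonical sense you need. If it held, no orientation would be required. The charts $(\mathrm{pt},\A^1,z^2,0)$ and $(\mathrm{pt},\mathrm{pt},0,\id)$ induce the same section $s=0$. Yet their vanishing-cycle sheaves are identified only up to sign, as the chart automorphism $z\mapsto -z$ shows. That sign is exactly what $Q_{R,U,f,i}$ records. What Brav--Bussi--Dupont--Joyce--Szendr\H{o}i prove canonically is dependence on the third-order thickening of $\Crit(f)$ in $U$ together with the restriction of $f$ to it, not on $f+I^2$ alone.
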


\begin{example}
  Suppose that $X=\Crit(f)$ globally for a regular function $f\colon U\to\A^1$ on a smooth variety $U$, and equip $X$ with the induced d-critical structure. The chart identifies 
  \begin{equation}
  \label{eq:227}
    K^{\vir}_{X,s}\cong K_U^{\otimes 2}|_{X_{\red}}.
  \end{equation}
  For the corresponding canonical orientation, whose orientation line is $K_U|_{X_{\red}}$, the principal $\Z_2$-bundle in Theorem~\ref{thm:dc-space-vc} has a canonical trivialization. Hence,
\begin{equation}
\label{eq:124}
  \phi_{X,s,o} \simeq \phi_f|_X.
\end{equation}
\end{example}

\begin{remark}\label{rem:smooth-pullback-space}
  This construction is compatible with smooth pullback. If $q\colon(X_1,s_1,o_1)\longrightarrow (X_2,s_2,o_2)$ is a smooth morphism of oriented d-critical spaces of relative dimension $d$, where $s_1 = q^{*}s_2$ and $o_1 = q^{*}o_2$, then there is a canonical isomorphism 
\begin{equation}
\label{eq:125}
  \Theta_q\colon \phi_{X_1,s_1,o_1} \xrightarrow{\sim} q^{*}\phi_{X_2,s_2,o_2}[d],
\end{equation}
compatible with composition and with the smooth base-change isomorphisms on d-critical charts.
\end{remark}

\begin{theorem}[{\cite[Theorem 4.8]{BenBassatBravBussiJoyce2015}}]\label{thm:global sheaf of vanishing cycles}
  Let $(\mathcal{X},s,o)$ be an oriented d-critical Artin stack locally of finite type over $\C$. There exists a natural perverse sheaf $\phi_{\mathcal{X},s,o} \in \Perv( \mathcal{X})$ characterised by the following property: For every smooth morphism $t\colon T \to \mathcal{X}$ from a scheme of relative dimension $d$, let $t^{*}s$ be the induced d-critical structure on $T$ and let $t^{*}o$ be the pull back of the orientation. Then there is a natural isomorphism 
  \begin{equation}
  \label{eq:226}
    \theta_t \colon \phi_{T,t^{*}s,t^{*}o} \xrightarrow{ \sim} t^{*}\phi_{\mathcal{X},s,o}[d].
  \end{equation}
  For every smooth morphism $q\colon (t_1\colon T_1 \to \mathcal{X}) \to (t_2\colon T_2 \to \mathcal{X})$ these isomorphisms are compatible with the pullback $\theta_q$ in Remark \ref{rem:smooth-pullback-space}.

  Thus, the family of perverse sheaves on the smooth charts of $\mathcal{X}$ satisfies the cocycle condition and determines $\phi_{\mathcal{X},s,o}$ uniquely up to canonical isomorphism. 
\end{theorem}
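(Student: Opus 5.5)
The plan is to build $\phi_{\mathcal{X},s,o}$ directly as an object of $\Perv(\mathcal{X})$, in the sense of the definition of perverse sheaves on smooth stacks above. That definition only uses smooth charts, so it applies verbatim to any Artin stack. Concretely, for every smooth $t\colon T\to\mathcal{X}$ from a scheme I will take
\[
P_T\coloneq \phi_{T,s_T,o_T}\in\Perv(T),
\]
where $s_T=t^{*}s$ and $o_T$ is the induced orientation with orientation line $L_T$ as in \eqref{eq:164}. Since $(\mathcal{X},s)$ is a d-critical Artin stack, each $(T,s_T)$ is a d-critical scheme. So Theorem~\ref{thm:dc-space-vc} produces $P_T$, canonical up to canonical isomorphism.

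\textbf{Transition isomorphisms.} Let $q\colon (t_1\colon T_1\to\mathcal{X})\to(t_2\colon T_2\to\mathcal{X})$ be a smooth morphism over $\mathcal{X}$. I need
\[
q^{*}P_{T_2}[\dim q]\xrightarrow{\sim}P_{T_1}.
\]
\begin{enumerate}
\item The d-critical structures match: $s_{T_1}=q^{*}s_{T_2}$, because $s$ is a global section of the lisse-\'etale sheaf $\mathcal{S}^0_{\mathcal{X}}$, whose transition maps are Joyce's smooth pullbacks.
\item The orientations match: $o_{T_1}=q^{*}o_{T_2}$. On lines this is the compatibility \eqref{eq:206}, $L_{T_1}\cong q^{*}_{\red}L_{T_2}\otimes\det(\Omega^1_{T_1/T_2})|_{T_{1,\red}}$. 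On squares, I must check that the square of this isomorphism matches the chain of $\Gamma$-isomorphisms of the $K^{\vir}$ theorem. This uses the compatibility of $\Gamma_{t_1}$, $\Gamma_{t_2}$ and $\Gamma_q$ with composition, together with the exact sequence $0\to q^{*}\Omega^1_{T_2/\mathcal{X}}\to\Omega^1_{T_1/\mathcal{X}}\to\Omega^1_{T_1/T_2}\to 0$.
\item Given (1) and (2), Remark~\ref{rem:smooth-pullback-space} supplies $\Theta_q\colon P_{T_1}\xrightarrow{\sim} q^{*}P_{T_2}[\dim q]$. Its inverse is the required transition isomorphism.
\end{enumerate}
The cocycle condition for a composite $T_1\to T_2\to T_3$ is then exactly the statement that $\Theta_q$ is compatible with composition, which is part of Remark~\ref{rem:smooth-pullback-space}.

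\textbf{Existence, uniqueness and $\theta_t$.} The family $(P_T,\Theta_q^{-1})$ is by definition an object $\phi_{\mathcal{X},s,o}\in\Perv(\mathcal{X})$. Its value on $T$ is $P_T$; in the normalisation of the definition this is $t^{*}\phi_{\mathcal{X},s,o}[d]$, so $\theta_t$ is the identity, and its compatibility with $\theta_q$ is built into the construction. For uniqueness, let $P'$ be another object with isomorphisms $\theta'_t$ compatible with the $\Theta_q$. Then the maps $\theta'_t\circ\theta_t^{-1}$ form a compatible family, hence a morphism in $\Perv(\mathcal{X})$. Perverse sheaves form a stack for the smooth topology (BBD, Laszlo--Olsson) and $\Hom$ between perverse sheaves has no negative Exts, so this family glues uniquely. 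Naturality (independence of choices) follows the same way, since every $P_T$ is canonical up to canonical isomorphism.

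\textbf{Main obstacle.} The hard part is step (2) together with the requirement that $\Theta_q$ genuinely respects the $\Z_2$-twists $Q_{R,U,f,i}$. Concretely, for charts $(R,U,f,i)$ on $T_2$ and the pulled-back chart on $T_1$ (with $U$ replaced by a smooth $U'\to U$ extending $q$), I must show that the square-root torsors agree under $q$. For this I would first reduce, locally in the smooth topology, to the case where $q$ is a projection $T_2\times\A^n\to T_2$. I would then check directly that $K_{U\times\A^n}\cong K_U\boxtimes K_{\A^n}$ matches the factor $\det(\Omega^1_{T_1/T_2})$ in \eqref{eq:206}, with consistent signs. A second, more routine, point is keeping the shift conventions straight between the relative dimension of $t$ and the dimension of $q$.
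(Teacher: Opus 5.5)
The paper does not prove this statement itself; it quotes it from \cite{BenBassatBravBussiJoyce2015}. Your argument is essentially the proof given in that reference: take $P_T=\phi_{T,s_T,o_T}$ on each smooth chart, check that $s$ and $o$ pull back compatibly, use the smooth-pullback isomorphisms $\Theta_q$ of Remark~\ref{rem:smooth-pullback-space} and their compatibility with composition as descent data, and glue because perverse sheaves form a stack for the smooth topology. The $\Z_2$-twist compatibility you call the main obstacle is already contained in Remark~\ref{rem:smooth-pullback-space}, so the only real verification is your step (2). That step follows from the compatibility of the isomorphisms $\Gamma_t$ with smooth morphisms and composition in Joyce's theorem on $K^{\vir}$.
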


\begin{example}
  Let $U$ be a smooth Artin stack locally of finite type and let $f\colon U \to \A^1$ be a regular function. The derived critical locus $\Crit(f)$ carries a canonical $(-1)$-shifted symplectic structure. Its classical truncation 
  \begin{equation}
  \label{eq:211}
    Z \coloneq t_0\Crit(f)
  \end{equation}
  has the induced d-critical structure and a canonical orientation whose orientation line is 
  \begin{equation}
  \label{eq:216}
    K_U|_{Z_{\red}}, \qquad K_U \coloneq \det \mathbb{L}_U
  \end{equation}
  by \cite[Lemma A.10]{KinjoKoseki2026}. Equivalently, this orientation is induced by the canonical isomorphism 
  \begin{equation}
  \label{eq:225}
    K_U^{\otimes 2}|_{Z_{\red}} \xrightarrow{\sim} K^{\vir}_{Z,s}.
  \end{equation}
  For this orientation, \cite[Section A.2]{KinjoKoseki2026} shows that there is a natural isomorphism
  \begin{equation}
    \label{eq:127}
    \phi_{\Crit(f),s,o}\xrightarrow{\sim} \phi_f(\Q_U[\dim U]).
  \end{equation}
  Thus, a global critical locus presentation produces the same vanishing-cycle sheaf as the intrinsic construction from the induced oriented d-critical stack. In particular, the resulting sheaf depends only on the induced oriented d-critical structure, not on the choice of a compatible critical locus presentation; see \cite[Theorem 6.9]{BravBussiDupontJoyceSzendroi2015}, \cite[Theorem 4.8]{BenBassatBravBussiJoyce2015}, and \cite[Lemma A.10 and Appendix A.2]{KinjoKoseki2026}.
\end{example}

\begin{notation}
  When $(s,o)$ are the natural d-critical structure and orientation on $\mathfrak{M}_{X_c}^{\mathrm{ss}}(r,m)$ constructed in Proposition \ref{prop:naturally oriented}, we write 
  \begin{equation}
  \label{eq:96}
    \phi_{ \mathfrak{M}_{X_c}^{\mathrm{ss}}(r,m)} \coloneq \phi_{\mathfrak{M}^{\mathrm{ss}}_{X_c}(r,m),s,o}.
  \end{equation}
\end{notation}

\subsection{Monodromic Mixed Hodge Modules}
Let $k  = \C$.

\subsubsection{Mixed Hodge Modules}

Let $X$ be a separated complex scheme locally of finite type with quasi-compact connected components. Saito's category of \emph{mixed Hodge modules} is denoted $\MHM(X)$ with bounded derived category $D^b(\MHM(X))$. There is an exact rationalisation functor 
\begin{equation}
\label{eq:98}
\rat\colon D^b(\MHM(X)) \to D^b( \Perv(X)),
\end{equation}
which restricts to a faithful functor $\MHM(X) \to \Perv(X)$ and is compatible with all six functors. 
Denote by $W_{\bullet}M$ the increasing \emph{weight filtration} of a mixed Hodge module $M$. It is \emph{pure of weight $i$} if $W_{i-1}M = 0$ and $W_i M = M$. A complex $M \in D^b(\MHM(X))$ is \emph{pure} if $\mathcal{H}^i(M)$ is pure of weight $i$ for every $i$. The category of pure Hodge modules is semisimple. Let $a_X\colon X \to \Spec \C$ be the structure morphism and let $\Q$ be the pure Hodge structure of weight zero and dimension one. Define
\begin{equation}
\label{eq:132}
  \Q_X \coloneq a_X^{*}\Q, \qquad D\Q_X\coloneq a_X^{!}\Q
\end{equation}
in $D^b(\MHM(X))$.

\begin{definition}
  For a complex Artin stack $\mathfrak{X}$, let $\Sch_{\mathfrak{X}}^{\mathrm{sm}, \mathrm{sep}}$ be the category of smooth separated  schemes of finite type over $\mathfrak{X}$. A \emph{mixed Hodge module} on $\mathfrak{X}$ is an assignment 
  \begin{equation}
  \label{eq:133}
    (t\colon T \to \mathfrak{X} ) \mapsto M_t \in \MHM(T)
  \end{equation}
  together with isomorphisms 
  \begin{equation}
  \label{eq:134}
    q^{*}M_{t_2}[\dim q] \simeq M_{t_1}
  \end{equation}
  for each smooth morphism $q\colon (t_1\colon T_1 \to \mathfrak{X}) \to (t_2\colon T_2 \to \mathfrak{X})$ in $\Sch^{\mathrm{sm},\mathrm{sep}}_{\mathfrak{X}}$, satisfying the usual associativity condition.  There is a rationalisation functor 
  \begin{equation}
  \label{eq:135}
    \rat\colon \MHM( \mathfrak{X}) \to \Perv( \mathfrak{X}).
  \end{equation}
\end{definition}

We use the approximation construction of \cite[Section 4.1]{KinjoKoseki2026} for pushforward to a good moduli space.

In fact, given a morphism of finite type $p\colon \mathfrak{X} \to X$ from an Artin stack to a separated scheme of finite type satisfying an approximation condition\footnote{For every constructible complex $F \in D^b_c(\mathfrak{X})$ and integer $N$, there exists a smooth morphism $q_N\colon T_N \to \mathfrak{X}$ from a scheme such that the natural maps on perverse cohomology are isomorphisms in degrees $\leq N$. },
one can define
\begin{equation}
\label{eq:136}
  \mathcal{H}^n(p_{*}M) \coloneq   \mathcal{H}^n((p\circ q_N)_{*} q^{*}_N M[\dim q_N])
\end{equation}
for $M \in \MHM(\mathfrak{X})$, where $q_N\colon T_N \to \mathfrak{X}$ is the choice of approximation, by choosing such a sufficiently high approximation and taking perverse cohomology on $X$. This is independent of the choice of approximation. The semistable moduli stacks used below are quotient stacks, so the approximation condition holds; see \cite[Section 4.1]{KinjoKoseki2026}.
For a complex $M \in D(\MHM(\mathfrak{X}))$ we also use 
\begin{equation}
\label{eq:137}
  \mathcal{H}(M)\coloneq \bigoplus_i \mathcal{H}^i(M)[-i].
\end{equation}
When $\mathcal{H}(p_{*}M)$ is pure and $h\colon X \to B$ is proper, Saito's decomposition theorem and  \cite[Corollary 5.2.14]{Dimca2004} give the compatibility 
\begin{equation}
\label{eq:138}
  \mathcal{H}(h_{*} \mathcal{H}(p_{*}M)) \simeq \mathcal{H}((h\circ p)_{*}M),
\end{equation}
see \cite[Section 4.1]{KinjoKoseki2026}.

\subsubsection{Monodromic Mixed Hodge Modules}
For a separated scheme $X$ locally of finite type over $\C$ with quasi-compact connected components, let $\MMHM(X)$ be the abelian category of \emph{monodromic mixed Hodge modules}. Informally, an object is a mixed Hodge module together with monodromy data. There is a forgetful functor 
\begin{equation}
\label{eq:139}
  \MMHM(X) \to \MHM(X)
\end{equation}
and a fully faithful embedding 
\begin{equation}
\label{eq:99}
  \MHM(X) \hookrightarrow \MMHM(X),
\end{equation}
which equips a mixed Hodge module with trivial monodromy. 

For a regular function $f\colon X \to \A^1$, there is a monodromic vanishing-cycle functor 
\begin{equation}
\label{eq:104}
  \phi_f^{\mathrm{mmhm}}\colon D^b(\MHM(X)) \to D^b(\MMHM(X))
\end{equation}
whose rationalisation is the constructible vanishing-cycle functor. The construction extends by smooth descent to Artin stacks and to oriented d-critical stacks. Thus, $\phi_{\mathfrak{X},s,o}$ has a monodromic and ordinary mixed Hodge module lifts 
\begin{equation}
\label{eq:105}
  \phi_{\mathfrak{X},s,o}^{\mathrm{mmhm}} \in \MMHM(\mathfrak{X}), \qquad \phi_{\mathfrak{X},s,o}^{\mathrm{mhm}} \in \MHM(\mathfrak{X});
\end{equation}
the latter is obtained by forgetting monodromy. 
We keep the monodromic mixed Hodge module whenever invoking the BPS decomposition theorem; applying $\rat$ gives the perverse sheaf used in the final numerical Euler characteristic statement.

Two structure features of the monodromic category are used later. First, the Thom--Sebastiani isomorphism holds for monodromic vanishing cycles \cite[Proposition 2.13]{DavisonMeinhardt2020}. For functions compatible with a direct-sum morphism, vanishing cycles of the sum are identified with the external product of the vanishing cycles of the summands, pushed forward along that direct-sum morphism. Second, the category contains a square root $\mathbb{L}^{1/2}$ of the Lefschetz object 
\begin{equation}
\label{eq:140}
  \mathbb{L}\coloneq H^{*}_c(\A^1,\Q) \in D^b(\MHM( \mathrm{pt})) \subset D^b( \MMHM( \mathrm{pt})), \qquad (\mathbb{L}^{1/2})^{\otimes 2} \simeq \mathbb{L},
\end{equation}
which is used to define the intersection complex \eqref{eq:117}.
Moreover, 
\begin{equation}
\label{eq:142}
  H^{*}(B\C^{\times})_{\vir}\coloneq \mathbb{L}^{1/2} \otimes H^{*}(B\C^{\times})
\end{equation}
is the virtual cohomology of $B\C^{\times}$ appearing in the cohomological integrality formula in Section \ref{sec:BPS decomposition}.

For stacks, $\MMHM( \mathfrak{X})$ is again defined by smooth descent. For $p\colon \mathfrak{X} \to X$ satisfying the approximation condition, one can also define $\mathcal{H}^n(p_{*}(-))$ on monodromic mixed Hodge modules as well.

\begin{proposition}[{\cite[Theorems 2.14 and 4.3]{Saito1990}}]\label{prop:proper-pushforward-vanishing-cycles}
Let $h\colon X \to B$ be a proper morphism of complex algebraic varieties, and let
\begin{equation}
\label{eq:174}
  \widetilde{f}\colon B\longrightarrow \A^1
\end{equation}
be a regular function. Set $f\coloneq \widetilde{f} \circ h$. Then for every $M \in D^b(\MHM(X))$ there is a natural isomorphism
\begin{equation}
\label{eq:19}
  Rh_*\phi^{\mathrm{mmhm}}_{f}(M)\simeq  \phi^{\mathrm{mmhm}}_{\widetilde{f}}(Rh_*M)
\end{equation}
in  $D^b(\MMHM(B))$.

After applying rationalisation, if
\begin{equation}
\label{eq:113}
  X_0\coloneq f^{-1}(0),\qquad B_0\coloneq \widetilde{f}^{-1}(0),
\end{equation}
and $h_0\colon X_0\to B_0$ is the induced proper morphism, then there is a natural
isomorphism
\begin{equation}
\label{eq:114}
  Rh_{0,*}\phi_f(\operatorname{rat}M) \simeq \phi_{\widetilde{f}}(Rh_*\operatorname{rat}M)
\end{equation}
in $D^b_c(B_0,\Q)$.
\end{proposition}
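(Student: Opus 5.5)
This is a cited result (Saito), so the plan is to reduce it to the standard compatibility of vanishing cycles with proper direct image, and then to pass to rationalisations.

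\textbf{Reduction to a single critical value.} Since $\phi^{\mathrm{mmhm}}_f$ is by definition the direct sum of $\phi^{\mathrm{mmhm}}_{f-a}$ over the finitely many critical values $a$, it suffices to treat each $a$ separately. Replacing $\widetilde f$ by $\widetilde f-a$, we may assume the relevant value is $0$. Note that $f-a=(\widetilde f-a)\circ h$, and that $h$ maps $f^{-1}(a)$ into $\widetilde f^{-1}(a)$. A separate point concerns critical values of $f$ that are not critical values of $\widetilde f$, and conversely. Both sides are supported over the fibres, and the isomorphism at each fibre will show that a summand vanishes on one side if and only if it vanishes on the other. So the index sets may be taken to be the union of both sets of critical values.

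\textbf{Nearby and vanishing cycles.} I will use Saito's construction of $\psi_g$ and $\phi_g$ on $D^b(\MHM)$ via the $V$-filtration and the graph embedding. The key input is Saito's theorem that, for $h$ proper, $Rh_*$ commutes with $\psi$ and with the canonical morphism $i^*[-1]\to\psi$. Here $i$ denotes the inclusion of the zero fibre, and the commutation follows from the compatibility of the $V$-filtration with proper direct image [Saito1990, 2.14, 4.3].

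\begin{enumerate}
\item Proper base change gives $i_{B_0}^*Rh_*\simeq Rh_{0,*}i_{X_0}^*$.
\item Saito's isomorphism $Rh_{0,*}\psi_f\simeq\psi_{\widetilde f}Rh_*$ is compatible with the morphisms from $i^*[-1]$.
\item Taking cones of the canonical morphism $i^*[-1]\to\psi$ defines $\phi$. Functoriality of cones yields $Rh_{0,*}\phi_f\simeq\phi_{\widetilde f}Rh_*$.
\end{enumerate}

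This cone argument is the main subtle point, since cones are not functorial in a triangulated category. The way around it is to work at the level of Saito's explicit complexes, where $\phi$ is defined functorially rather than as an abstract cone.

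\textbf{Monodromy.} The semisimple part of the monodromy is the action of $t\partial_t$ on the graded pieces of the $V$-filtration. This action is preserved by proper pushforward, so the isomorphism lifts to $D^b(\MMHM(B))$.

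\textbf{Rationalisation.} The rationalised statement follows by applying $\rat$. This uses that $\rat$ intertwines $\phi^{\mathrm{mmhm}}$ with the topological $\phi$ after forgetting monodromy, and that $\rat$ commutes with $Rh_*$. Alternatively, the rationalised statement can be proved directly: topologically, $\psi$ is $i^*Rj_*p^*$ along the universal cover of the punctured disc. Proper base change then gives the commutation with $Rh_*$ on $\psi$, and the triangle argument above applies to $\phi$.
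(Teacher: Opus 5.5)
Your proposal is correct and follows the paper's route: the paper gives no argument beyond invoking Saito's compatibility of nearby and vanishing cycles with proper direct image, and your sketch reduces to that same input. The cone and monodromy paragraphs are not needed, because Saito's theorem already handles $\phi_{f,1}$ directly (it is defined through the $V$-filtration, not as an abstract cone) compatibly with monodromy, and the monodromic version follows formally by applying the mixed Hodge module statement to the proper map $h\times\mathrm{id}_{\mathbb{G}_m}$ in the standard construction of $\phi^{\mathrm{mmhm}}$ via vanishing cycles of an auxiliary function on $X\times\mathbb{G}_m$.
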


Write
\begin{equation}
\label{eq:143}
  D^{\geq, \mathrm{lf}} (\MMHM(X))
\end{equation}
for the full subcategory of locally finite complexes that are bounded below.\footnote{A complex $M$ is locally finite if the following conditions are met: 
\begin{enumerate}
\item\label{item:4} For all connected components $Z\subset X$, for all $n \in \Z$, the set $\{i \in \Z \mid \mathrm{gr}^W_n \mathcal{H}^i(M)|_Z \neq 0\}$ is finite. 
\item\label{item:5} There exists $n \in \Z$ such that for all $i$, $W_n \mathcal{H}^i(M)|_Z = 0$.
\end{enumerate}}
If $(X,m)$ is a commutative monoid scheme and $m\colon X \times X \to X$ is finite, define the convolution product of $M$ and $N$ to be
\begin{equation}
\label{eq:144}
  M\boxtimes_m N \coloneq m_{*}(M\boxtimes N).
\end{equation}
This gives a symmetric monoidal structure on $D^{\geq, \mathrm{lf}}(\MMHM(X))$ and hence symmetric product functors $\Sym^n_{\boxtimes_m}$.

\subsubsection{Intersection complexes and Maulik--Shen theorem}
Let $X$ be an irreducible complex variety and $j\colon X_{\mathrm{reg}} \hookrightarrow X$ the inclusion of the smooth locus. The ordinary perverse intersection complex is
\begin{equation}
\label{eq:116}
  \widetilde{ \IC}_X\coloneq j_{!*} \Q_{X_{\mathrm{reg}}}[\dim X].
\end{equation}
The normalised monodromic object is
\begin{equation}
\label{eq:117}
  \IC_X\coloneq \mathbb{L}^{-\dim X /2}\otimes \widetilde{\IC}_X.
\end{equation}
By abuse of notation, when no monodromic normalisation is being tracked, $\IC_X$ denotes the underlying perverse sheaf $\widetilde{\IC}_X$. 
If $\mathfrak{X}$ is a smooth complex Artin stack locally of finite type, we write
\begin{equation}
\label{eq:109}
  \IC_{\mathfrak{X}}\coloneq \Q_{\mathfrak{X}}[\dim \mathfrak{X}]
\end{equation}
or, in the mixed-Hodge-module setting, the corresponding shifted constant mixed Hodge module. For a disjoint union of smooth connected components, this convention is understood componentwise.

In the notation of Section \ref{sec:twisted higgs bundles}, let $g\colon \mathcal{C} \to U$ be the universal spectral curve over 
  \begin{equation}
  \label{eq:6}
    U \coloneq \{a \in B_Y\mid \textrm{the spectral curve } C_a \subset Y \textrm{ is smooth}\}.
  \end{equation}
The key input from Maulik--Shen \cite{MaulikShen2023}, is the following cohomological $\chi$-independence theorem for the Hitchin fibration. 

\begin{theorem}[{\cite[Theorem 0.4]{MaulikShen2023}}]\label{thm:ms}   
  Suppose that $\deg L>2g(C) -2$. Then there is an isomorphism 
  \begin{equation}
  \label{eq:4}
    h_{Y,*}  \IC_{M_Y^{\mathrm{ss}}(r,m)}  \cong \bigoplus_{i=0}^{2d} \IC\left(\bigwedge\nolimits^i \mathbf{R}^1 g_{*}\Q_{\mathcal{C}}\right)[-i+d],
  \end{equation}
  where $d$ is the genus of the fibers of the universal spectral curve. In particular, for fixed $r$ the direct image $h_{Y,*} \IC_{M_Y^{\mathrm{ss}}(r,m)}$ is independent of $m$. 
\end{theorem}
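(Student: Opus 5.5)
The plan is to reduce the statement to a \emph{support theorem}: every simple summand of the decomposition-theorem splitting of $h_{Y,*}\IC_{M_Y^{\mathrm{ss}}(r,m)}$ has full support $B_Y$. Once that holds, the direct image is determined by its restriction to the dense open $U\subset B_Y$ of smooth spectral curves, and the restriction is visibly independent of $m$.

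First, I would check that $h_Y$ is proper, which is the standard properness of the Hitchin morphism via the spectral correspondence and Langton-type arguments. Since $\IC_{M_Y^{\mathrm{ss}}(r,m)}$ is pure, Saito's decomposition theorem gives
\begin{equation}
  h_{Y,*}\IC_{M_Y^{\mathrm{ss}}(r,m)}\cong \bigoplus_{j} \IC_{Z_j}(\mathcal{L}_j)[-n_j]
\end{equation}
with $Z_j\subset B_Y$ closed irreducible and $\mathcal{L}_j$ local systems.

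Next, I would compute over $U$. For $a\in U$ the spectral curve $C_a$ is smooth of genus $d$. By the spectral correspondence, every semistable sheaf of class $r[C]$ with support $C_a$ is a line bundle on $C_a$, hence stable, so $h_Y^{-1}(U)\to U$ is the relative Picard scheme $\mathrm{Pic}^{e}(\mathcal{C}/U)$ of the appropriate degree $e$. This is a smooth projective family of torsors under the relative Jacobian. Its cohomology sheaves are $\bigwedge^i \mathbf{R}^1 g_*\Q_{\mathcal{C}}$, and the direct image splits as their direct sum, for instance by Deligne's degeneration for smooth projective maps together with the cup-product structure on abelian torsors. The key point is that, as local systems, these depend only on the family $g\colon\mathcal{C}\to U$ and not on $e$, hence not on $m$. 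If every $Z_j=B_Y$, then intermediate extension gives exactly the displayed formula with the shifts $[-i+d]$, and $m$-independence follows.

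The main obstacle is the support theorem. I would follow Ngô's strategy. First, I would establish that the relative Picard group scheme $P\to B_Y$ of spectral curves acts on the moduli space, that the Tate module of its affine part is polarizable, and that the family is $\delta$-regular, meaning
\begin{equation}
  \operatorname{codim}\{a\colon\delta(a)\ge\delta\}\ge\delta .
\end{equation}
This is where $\deg L>2g(C)-2$ enters: the family of spectral curves is then versal enough for the local singularities, as in Ngô and Chaudouard--Laumon. Ngô's support inequality then forces each $Z_j$ to have $\operatorname{codim} Z_j\le\delta_{Z_j}$, and $\delta$-regularity forces $Z_j=B_Y$ for the integral-spectral-curve part.

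The genuinely hard part is the reducible and non-reduced spectral curves, together with the strictly semistable locus when $\gcd(r,m)\neq1$. There $M_Y^{\mathrm{ss}}(r,m)$ is singular and the fibrewise action is not free. To handle it, I would first try to pass to a smooth model. One option is to use moduli of stable pairs, or a framed or twisted stability, to get a smooth space dominating $M_Y^{\mathrm{ss}}(r,m)$ whose direct image contains $\IC_{M_Y^{\mathrm{ss}}(r,m)}$ as a summand. Another is to work étale-locally on $M_Y^{\mathrm{ss}}$ with Ext-quiver local models. One would then run the support argument on that model, or argue inductively on $r$ using that summands supported on loci of reducible spectral curves would have to come from products of lower-rank moduli, and rule them out by a dimension and perversity count. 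I expect this step to carry essentially all the difficulty.
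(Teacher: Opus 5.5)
The paper does not prove this statement. It is quoted from Maulik--Shen and used as a black box in Theorem~\ref{thm:chi-independence on Y}. Measured against what that citation has to deliver, your reduction is sound: properness of $h_Y$, the decomposition theorem, the identification $h_Y^{-1}(U)\cong\mathrm{Pic}^{m+d-1}(\mathcal{C}/U)$ with cohomology sheaves $\bigwedge^i\mathbf{R}^1g_*\Q_{\mathcal{C}}$ independent of $m$, and the deduction of \eqref{eq:4} from full support are all fine. For $\gcd(r,m)=1$, full support is the Chaudouard--Laumon support theorem. Your Ng\^o-type argument in fact works for every $m$ over the locus $B_Y^{\mathrm{ell}}$ of integral spectral curves. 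There, sheaves with integral support are stable, and $\mathrm{Ext}^2(F,F)\cong\Hom(F,F\otimes\omega_Y)^\vee=0$ for semistable $F$ when $\deg L>2g(C)-2$, so $h_Y^{-1}(B_Y^{\mathrm{ell}})$ is smooth. What remains, when $\gcd(r,m)>1$, is to exclude summands of $h_{Y,*}\IC_{M_Y^{\mathrm{ss}}(r,m)}$ supported inside $B_Y\setminus B_Y^{\mathrm{ell}}$. These are exactly the reducible and non-reduced spectral curves, where $M_Y^{\mathrm{ss}}(r,m)$ is singular. This is the genuinely new content of the cited theorem, and your proposal stops at that point, so it is not yet a proof.

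None of the routes you list closes this gap as formulated.
\begin{itemize}
\item \textbf{Smooth models.} These do not have the property you want to transfer. For the smooth stack itself, Theorem~\ref{thm:BPS decomposition} with $F_\mu=0$ gives $\mathcal{H}\bigl(p_*\IC_{\mathfrak{M}_Y^{\mathrm{ss}}(\mu)}\bigr)\cong\Sym_{\boxtimes_\oplus}\bigl(H^*(B\C^\times)_{\vir}\otimes\IC_{M_Y^{\mathrm{ss}}(\mu)}\bigr)$. For $\gcd(r,m)>1$, the $\Sym^{\geq2}$-terms in the $(r,m)$-component push forward under $h_Y$ to nonzero complexes supported on proper closed loci of reducible spectral curves: the images of $B_{r_1}\times B_{r_2}\to B_Y$, where $B_{r_i}$ is the rank-$r_i$ Hitchin base and the map multiplies characteristic polynomials. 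Framed and pair moduli have analogous product terms. They also do not carry the action of the Picard group scheme $P$, since tensoring with a line bundle on $C_a$ does not preserve a framing or a section. So Ng\^o's argument cannot yield full support on such a model; separating the $\IC$-summand from the product terms is precisely the problem.
\item \textbf{\'Etale-local Ext-quiver charts.} These cannot by themselves control the supports of a proper direct image, which depend on a neighbourhood of an entire fibre $h_Y^{-1}(b)$.
\item \textbf{Dimension and perversity counts.} These cannot exclude a support either. In Ng\^o's method a support $Z$ is excluded by the inequality $\operatorname{codim}Z\le\delta_Z$. Its proof uses the abelian part of $P$ acting on the cohomology of fibres, together with Poincar\'e duality on a smooth total space. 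This is then combined with strict codimension estimates on the non-elliptic strata. For instance, the locus of curves with two components of degrees $r_1,r_2$ has codimension $r_1r_2\deg L$, while $\delta=r_1r_2\deg L-1$ at its generic point.
\end{itemize}
The obstacle is therefore not that the action fails to be free: Ng\^o only needs affine stabilizers. (Note also that on reducible curves only the multidegree-zero part $P^0$ preserves semistability.) The obstacle is that the total space is singular. What you need is either a support inequality valid for $\IC$ of the singular $M_Y^{\mathrm{ss}}(r,m)$, together with $\delta$-regularity over the non-reduced locus, or some other mechanism that isolates the $\IC$-summand. The proposal supplies neither.
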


\subsubsection{BPS Invariants}

Let $p \colon \mathfrak{M} \to M$ be the good moduli space for a semistable moduli stack equipped with an oriented d-critical structure. Define
\begin{equation}
\label{eq:15}
  \phi_M^{\mathrm{mmhm}} \coloneq \mathcal{H}^1\left(p_{*}\phi_{\mathfrak{M}}^{\mathrm{mmhm}}\right) \in \MMHM(M).
\end{equation}
We write $\phi_M\coloneq\rat\left(\phi_M^{\mathrm{mmhm}}\right)$ for the underlying perverse sheaf.
\\

For a smooth quasi-projective Calabi--Yau threefold $X$ and a class $\beta$, the Maulik--Toda generalised Gopakumar--Vafa count starts with the perverse sheaf $\phi_{M_X^{\mathrm{ss}}(\beta,m)}$  on the moduli space of one-dimensional semistable sheaves. The \emph{BPS invariants} $n_{g,\beta,m}$ are defined via the following identity
\begin{equation}
\label{eq:110}
  \sum_{i\in \Z} \chi\left(\mathcal{H}^i\left(\HC_{*} \phi_{M_X(\beta,m)}\right)\right) y^i = \sum_{g\geq0} n_{g,\beta,m} \left(y^{1/2} + y^{-1/2}\right)^{2g},
\end{equation}
where $\HC$ is the Hilbert--Chow morphism of $M$. We consider the case $X = X_c$ and $(\pi\circ q)_{\#} \beta = r[C]$. The map $b\colon S_c \to B_Y$ constructed in Section \ref{sec:warped local curves} allows the Hilbert--Chow pushforward to be compared with the Hitchin-base pushforward; this is the mechanism used in Corollary \ref{cor:chi-independence}.

\subsection{Hochschild Homology and Dimensional Reduction}

Let $k$ be again an arbitrary field of characteristic $0$.

\subsubsection{Hochschild Homology}

The Hochschild homology complex of a $k$-linear stable $\infty$-category $\mathcal{C}$  is denoted by $\HH( \mathcal{C})$. The negative cyclic complex is 
\begin{equation}
\label{eq:153}
  \HC^-(\mathcal{C}) \coloneq \HH(\mathcal{C})^{\mathrm{S}^1}.
\end{equation}
There is a forgetful map 
\begin{equation}
\label{eq:154}
  (-)^{\natural} \colon \HC^-(\mathcal{C}) \to \HH(\mathcal{C})
\end{equation}
which sends a negative cyclic class to its underlying Hochschild class. We also use the connecting map induced by the mixed structure on Hochschild chains. In the degree conventions of \cite{BozecCalaqueScherotzke2024}, this gives for every $n \in \Z$, a map of spaces 
\begin{equation}
\label{eq:178}
 \delta \colon |\HH(\mathcal{C})[n+1]| \to |\HC^-(\mathcal{C})[n]|.
\end{equation}
Here the bars $|- |$ denote the underlying derived mapping space.
Thus, a Hochschild class 
\begin{equation}
\label{eq:179}
  c\colon k[-(n+1)] \to \HH(\mathcal{C})
\end{equation}
determines a negative cyclic class $\delta c$, whose underlying Hochschild class is denoted $(\delta c)^{\natural} \in \HH( \mathcal{C})$. A Hochschild class $c$ is \emph{exact} if there exists a Hochschild class $c' $ such that 
\begin{equation}
\label{eq:181}
  c  = (\delta c' )^{\natural}.
\end{equation}
For each $p\geq 0$, there are natural Chern character maps 
\begin{equation}
\label{eq:182}
  \operatorname{ch}^{(p)}_{ \mathcal{C}} \colon |\HH(\mathcal{C})[n]| \to \mathcal{A}^p( \mathfrak{M}_{\mathcal{C}}, n+p)
\end{equation}
from Hochschild classes to $p$-forms on $\mathfrak{M}_{\mathcal{C}}$, and closed Chern character maps 
\begin{equation}
\label{eq:183}
  \widetilde{ \operatorname{ch}}^{(p)}_{\mathcal{C}} \colon |\HC^-(\mathcal{C})[n]| \to \mathcal{A}^{p, \mathrm{cl}}( \mathfrak{M}_{\mathcal{C}}, n+p)
\end{equation}
from negative cyclic classes to closed $p$-forms. These maps are natural in $\mathcal{C}$ and the underlying $p$-form of $\widetilde{ \operatorname{ch}}^{(p)}_{\mathcal{C}}( \tilde{\alpha})$ is $\operatorname{ch}^{(p)}_{\mathcal{C}}(\tilde{\alpha}^{\natural})$.
Moreover, they are also compatible with the de Rham differential.
In other words, the following diagram commutes:
\begin{equation}
\label{eq:184}
\begin{tikzcd}
  {|}\HH(\mathcal{C})[n+1]{|} \arrow[r, "\operatorname{ch}^{(p-1)}_{\mathcal{C}}"] \arrow[d,"\delta"] & \mathcal{A}^{p-1} (\mathfrak{M}_{\mathcal{C}},n+p) \arrow[d,"\ddR"] \\
  {|}\HC^-(\mathcal{C})[n]{|} \arrow[r," \widetilde{ \operatorname{ch}}^{(p)}_{\mathcal{C}}"] \arrow[d,"(-)^{\natural}"]& \mathcal{A}^{p, \mathrm{cl}} (\mathfrak{M}_{\mathcal{C}}, n+p) \arrow[d]\\
  {|}\HH(\mathcal{C})[n]{|} \arrow[r, "\operatorname{ch}^{(p)}_{\mathcal{C}}"]  & \mathcal{A}^p (\mathfrak{M}_{\mathcal{C}},n+p) 
\end{tikzcd}
\end{equation}

\begin{example}
  The map 
  \begin{equation}
  \label{eq:186}
    \operatorname{ch}^{(0)}\colon |\HH(\mathcal{C})[n]| \to \mathcal{A}^0(\mathfrak{M}_{\mathcal{C}},n) = |\Gamma(\mathfrak{M}_{\mathcal{C}}, \O_{\mathfrak{M}_{\mathcal{C}}})[n]|
  \end{equation}
  assigns to a Hochschild class a function on the moduli stack of objects.
  Moreover, the map 
  \begin{equation}
  \label{eq:187}
     \operatorname{ch}^{(1)}\colon |\HH(\mathcal{C})[n]| \to \mathcal{A}^1(\mathfrak{M}_{\mathcal{C}},n+1)
  \end{equation}
  assigns to a Hochschild class a $1$-form on $\mathfrak{M}_{\mathcal{C}}$. If the Hochschild class is exact, say 
  \begin{equation}
  \label{eq:188}
    c = (\delta c' )^{\natural},
  \end{equation}
  for $c' \in |\HH(\mathcal{C})[n+1]|$,
  then the associated $1$-form is the de Rham differential of the function 
  \begin{equation}
  \label{eq:189}
    \operatorname{ch}^{(0)}_{\mathcal{C}}(c' ).
  \end{equation}
  That is
  \begin{equation}
  \label{eq:185}
      \operatorname{ch}^{(1)}\left( (\delta c' )^{\natural}\right) = \ddR \operatorname{ch}^{(0)}_{\mathcal{C}}(c' ).
  \end{equation}
\end{example}

\begin{remark}
  For a smooth variety $Y$, the Hochschild--Kostant--Rosenberg isomorphism identifies 
\begin{equation}
\label{eq:69}
  \HH_k (\QCoh(Y)) \xrightarrow{\sim} \bigoplus_{p-q=k} H^q (Y,\Omega^p_Y).
\end{equation}
In particular, $H^1(Y,\omega_Y)$ is contained in $\HH_{\dim Y - 1}(\QCoh(Y))$.
\end{remark}

\subsubsection{Dimensional Reduction and Deformed Calabi--Yau Categories}

We recall the two inputs on deformed Calabi--Yau completions needed to construct the global critical chart. Keller's deformed Calabi--Yau completion \cite{Keller2011} assigns to a stable category and a negative cyclic class a new Calabi--Yau category $\Pi_{d+1}(\mathcal{C},\tilde{c})$. Kinjo--Masuda use this construction for $\mathcal{C} = \QCoh(Y)$ and identify the resulting category with $\QCoh(X_c)$.

\begin{theorem}[{\cite[Corollary 6.19]{BozecCalaqueScherotzke2024}}]\label{thm:bcs}
  Let $d \in \Z$,  $\mathcal{C}$ a $k$-linear stable $\infty$-category of finite type, and $c\colon k[d-2] \to \HH(\mathcal{C})$ a Hochschild homology class. Let $\delta c$ be its image in $\HC^-( \mathcal{C})$ under \eqref{eq:178}.
  Then there exists  a natural function $f_c\colon \mathfrak{M}_{\mathcal{C}} \to \A^1[2-d]$ such that there exists  a natural equivalence  of $(1-d)$-shifted symplectic derived locally Artin stacks 
  \begin{equation}
  \label{eq:10}
    \mathfrak{M}_{\Pi_{d+1}(\mathcal{C},\delta c)} \simeq \Crit(f_c \colon \mathfrak{M}_{\mathcal{C}} \to \A^1[2-d]).
  \end{equation}
\end{theorem}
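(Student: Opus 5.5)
The plan is to deduce the statement from two facts: the moduli of objects in the deformed Calabi--Yau completion is the derived zero locus of a closed $1$-form, and for $\delta c$ coming from a Hochschild class $c$ this $1$-form is exact. Concretely, I would first treat the undeformed case and then incorporate the deformation via the Chern character maps \eqref{eq:182}--\eqref{eq:183}. Throughout, $\mathcal{C}$ is of finite type, so $\mathfrak{M}_{\mathcal{C}}$ is a derived locally Artin stack of locally finite presentation (To\"en--Vaqui\'e) with perfect cotangent complex $\mathbb{L}_{\mathfrak{M}_{\mathcal{C}}}$.

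\emph{Step 1 (undeformed completion).} For $\tilde c=0$, Keller's completion $\Pi_{d+1}(\mathcal{C})$ is the tensor category $T_{\mathcal{C}}(\mathcal{C}^!\,[d])$ on the shifted inverse dualising bimodule. An object of $\Pi_{d+1}(\mathcal{C})$ is then an object $E\in\mathcal{C}$ together with a map $\mathcal{C}^!\otimes E\,[d]\to E$. By Serre-type duality for smooth categories, such a map is the same as a point of $\mathrm{Ext}^\bullet(E,E)^\vee$ in the appropriate degree. Hence
\begin{equation}
  \mathfrak{M}_{\Pi_{d+1}(\mathcal{C})}\simeq T^*[1-d]\,\mathfrak{M}_{\mathcal{C}}=\Crit(0).
\end{equation}
I would check that the Brav--Dyckerhoff $(d+1)$-Calabi--Yau structure induces the canonical $(1-d)$-shifted symplectic form on the shifted cotangent stack.

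\emph{Step 2 (deformation by a negative cyclic class).} For a general class $\tilde c\in\HC^-(\mathcal{C})$ of the relevant degree, the deformed completion $\Pi_{d+1}(\mathcal{C},\tilde c)$ modifies the differential of the tensor algebra by the Hochschild class $\tilde c^{\natural}$. On moduli, this shifts the defining equation by the $1$-form $\alpha\coloneq\operatorname{ch}^{(1)}_{\mathcal{C}}(\tilde c^{\natural})$. I would then identify $\mathfrak{M}_{\Pi_{d+1}(\mathcal{C},\tilde c)}$ with the derived intersection
\begin{equation}
  \mathfrak{M}_{\mathcal{C}}\times_{0,\,T^*[1-d]\mathfrak{M}_{\mathcal{C}},\,\alpha}\mathfrak{M}_{\mathcal{C}},
\end{equation}
that is, the derived zero locus of $\alpha$. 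Because $\tilde c$ is negative cyclic, $\alpha$ is the underlying form of the closed form $\widetilde{\operatorname{ch}}^{(1)}_{\mathcal{C}}(\tilde c)$. The graph of $\alpha$ is therefore Lagrangian, and the intersection carries a $(1-d)$-shifted symplectic structure. This structure should agree with the one from the relative Calabi--Yau structure on $\mathcal{C}\to\Pi_{d+1}(\mathcal{C},\tilde c)$; the cleanest way to see this is to realise both as the symplectic structure on the fibre of the Lagrangian correspondence $\mathfrak{M}_{\Pi}\to\mathfrak{M}_{\mathcal{C}}$.

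\emph{Step 3 (exactness).} Now take $\tilde c=\delta c$ and apply the commutative diagram \eqref{eq:184} with $p=1$:
\begin{equation}
  \alpha=\operatorname{ch}^{(1)}\bigl((\delta c)^{\natural}\bigr)=\ddR\operatorname{ch}^{(0)}_{\mathcal{C}}(c).
\end{equation}
Setting $f_c\coloneq\operatorname{ch}^{(0)}_{\mathcal{C}}(c)$, a function of degree $2-d$ by \eqref{eq:186}, the zero locus of $\alpha$ is $\Crit(f_c)$ by definition. The shifted symplectic structures also match, since the homotopy witnessing closedness of $\alpha$ is the one coming from $\ddR f_c$. Naturality of $f_c$ and of the equivalence follows from naturality of $\delta$ and of the Chern character maps in $\mathcal{C}$.

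The main obstacle is Step 2. It requires identifying the moduli functor of the deformed completion with the derived zero locus of $\alpha$ as derived stacks, not merely on points, and comparing the two shifted symplectic structures. I would first prove the identification on tangent complexes, where it reduces to a computation of the deformed bimodule differential against $\mathrm{End}(E)$. I would then globalise using the Lagrangian-correspondence formalism for relative Calabi--Yau structures (Brav--Dyckerhoff), which packages the comparison of symplectic forms.
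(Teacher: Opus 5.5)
The paper gives no proof of this statement. It is quoted from \cite[Corollary 6.19]{BozecCalaqueScherotzke2024}, so your proposal has to be judged as a reconstruction of that result. Your three-step architecture is the right one, and Step 3 is correct. The top square of \eqref{eq:184} holds at the level of closed forms, so $\widetilde{\operatorname{ch}}^{(1)}_{\mathcal C}(\delta c)=\ddR\operatorname{ch}^{(0)}_{\mathcal C}(c)$ together with its closing data. The derived zero locus therefore becomes $\Crit(f_c)$ as a shifted symplectic stack.

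There is a degree slip in Step 2. By \eqref{eq:182} with $n=1-d$, the form $\alpha$ lies in $\mathcal A^1(\mathfrak M_{\mathcal C},2-d)$. The intersection must therefore be taken in $T^*[2-d]\mathfrak M_{\mathcal C}$, not in $T^*[1-d]\mathfrak M_{\mathcal C}$; inside $T^*[1-d]$ it would be $(-d)$-shifted. Your Step 1 also needs $T^*[2-d]$ as the ambient, since $\mathfrak M_{\mathcal C}\times_{0,T^*[2-d]\mathfrak M_{\mathcal C},0}\mathfrak M_{\mathcal C}\simeq T^*[1-d]\mathfrak M_{\mathcal C}$.

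The genuine gap is Step 2, which carries all the content of the theorem, and your plan for it does not produce the equivalence.

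\emph{No comparison map.} Pointwise tangent-complex identifications do not give a morphism of derived stacks, and nothing can be globalised until such a morphism exists.

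\emph{The symplectic comparison uses structures that do not exist.} In general there is no relative Calabi--Yau structure on $\mathcal C\to\Pi_{d+1}(\mathcal C,\tilde c)$, since one would force $\mathcal C$ to be $d$-Calabi--Yau. Likewise, $\mathfrak M_{\Pi}\to\mathfrak M_{\mathcal C}$ is not a Lagrangian correspondence, because $\mathfrak M_{\mathcal C}$ is not symplectic. The symplectic form on $\mathfrak M_\Pi$ comes from the \emph{absolute} CY structure on $\Pi_{d+1}(\mathcal C,\tilde c)$.

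\emph{The missing idea} is a universal property of the deformed completion. The category $\Pi_{d+1}(\mathcal C,\tilde c)$ is the homotopy pushout of $\mathcal C\leftarrow\Pi_d(\mathcal C)=T_{\mathcal C}(\mathcal C^![d-1])\rightarrow\mathcal C$. Here one functor is the augmentation, and the other is induced by the bimodule map $\tilde c^\natural\colon\mathcal C^![d-1]\to\mathcal C$. To see this, replace the augmentation by the cofibration $T_{\mathcal C}(\mathcal C^![d-1])\to T_{\mathcal C}(\mathrm{cone}(\mathrm{id}))$. The pushout is then Keller's $T_{\mathcal C}(\mathcal C^![d])$ with differential deformed by $\tilde c^\natural$.

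Since $\mathfrak M_{\mathcal D}(R)=\mathrm{Map}(\mathcal D^{\mathrm{op}},\mathrm{Perf}(R))$ turns colimits in $\mathcal D$ into limits, this gives $\mathfrak M_{\Pi_{d+1}(\mathcal C,\tilde c)}\simeq\mathfrak M_{\mathcal C}\times_{\mathfrak M_{\Pi_d(\mathcal C)}}\mathfrak M_{\mathcal C}$. The equivalence is naturally over $\mathfrak M_{\mathcal C}$, which is the form the paper later uses to identify the chart with $q_*$. Your Step 1, run one degree lower, gives $\mathfrak M_{\Pi_d(\mathcal C)}\simeq T^*[2-d]\mathfrak M_{\mathcal C}$.

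Two things then remain:
\begin{enumerate}
\item A trace computation with the universal object, identifying the section induced by $\tilde c^\natural$ with $\operatorname{ch}^{(1)}_{\mathcal C}(\tilde c^\natural)$.
\item Matching the absolute CY structure with the Lagrangian-intersection structure. One natural way is to present the CY structure as glued from relative CY (non-commutative Lagrangian) structures on the two functors $\Pi_d(\mathcal C)\to\mathcal C$, and then apply Brav--Dyckerhoff's functoriality. The structure along $\tilde c^\natural$ is where the negative cyclicity of $\tilde c$ is used.
\end{enumerate}
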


The second input is the geometric computation of the deformed Calabi--Yau completion for a torsor. Let $Y$ be a smooth variety of dimension $d$. The Connes exact sequence, together with the vanishing of negative cyclic homology above degree $d$, gives an exact sequence
\begin{equation}
\label{eq:214}
  0\longrightarrow \HC^{-}_{d-1}(\QCoh(Y)) \xrightarrow{(-)^\natural} \HH_{d-1}(\QCoh(Y)) \xrightarrow{B} \HH_d(\QCoh(Y)),
\end{equation}
see \cite[Equation (2.4)]{KinjoMasuda2024}. Under the Hochschild--Kostant--Rosenberg
isomorphism, $B$ is identified with the de Rham differential. Hence,
for
\begin{equation}
\label{eq:213}
  c\in H^1(Y,\omega_Y)=H^1(Y,\Omega_Y^d) \subset \HH_{d-1}(\QCoh(Y)),
\end{equation}
one has $B(c)=0$, since $\Omega_Y^{d+1}=0$. Therefore $c$ admits a
unique negative cyclic lift 
\begin{equation}
\label{eq:212}
  \tilde{c}\in\HC^{-}_{d-1}(\QCoh(Y)).
\end{equation}
Kinjo--Masuda identify the associated deformed Calabi--Yau completion with the category of quasi-coherent sheaves on the corresponding $\omega_Y$-torsor.

\begin{theorem}[{\cite[Theorem 4.3]{KinjoMasuda2024}}]\label{thm:km cy completion}
  Let $Y$ be a smooth variety of dimension $d$ over $k$ and let $c \in H^1(Y,\omega_Y)$ be  a class viewed as an element of $\HH_{d-1}(\QCoh(Y))$. The class $c$ lifts to a negative cyclic class $\tilde{c}\in\HC^{-}_{d-1}(\QCoh(Y))$. Let $X_c$ be the total space of the $\omega_Y$-torsor corresponding to $c$. 
  There is an equivalence of $k$-linear stable $\infty$-categories 
  \begin{equation}
  \label{eq:9}
    \Pi_{d+1} (\QCoh(Y), \tilde{c}) \cong \QCoh(X_c)
  \end{equation}
  over $\QCoh(Y)$ identifying the natural left $(d+1)$-Calabi--Yau structures. 
\end{theorem}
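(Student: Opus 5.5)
The statement has two parts, the existence of the negative cyclic lift and the categorical equivalence, and I would treat them separately.

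\emph{The lift.} This is already contained in the exact sequence \eqref{eq:214}. Under Hochschild--Kostant--Rosenberg, $c\in H^1(Y,\Omega^d_Y)$ sits in $\HH_{d-1}(\QCoh(Y))$. Connes' operator $B$ is identified with $\ddR$, which vanishes on top-degree forms, so $B(c)=0$. Injectivity of $(-)^\natural$ in this degree then gives a unique $\tilde c\in\HC^-_{d-1}(\QCoh(Y))$ with $\tilde c^\natural=c$.

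\emph{The equivalence: setting up both sides as algebras over $\O_Y$.} I would work with Keller's explicit model. For a smooth category $\mathcal{C}$ with inverse dualising bimodule $\Theta$, the undeformed completion $\Pi_{d+1}(\mathcal{C})$ is modules over the tensor category $T_{\mathcal{C}}(\Theta[d])$. The deformed completion changes the differential on the generator $\Theta[d]$ by the component of $\tilde c$, which is viewed as a bimodule map $\Theta[d]\to\mathcal{C}[1]$.

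For $\mathcal{C}=\QCoh(Y)$ we have $\Theta=\Delta_*\omega_Y^{-1}[-d]$. The tensor category then becomes the sheaf of algebras $\Sym_{\O_Y}(\omega_Y^{-1})$ on $Y$; commutativity comes from the symmetry of the diagonal bimodule. Hence $\Pi_{d+1}(\QCoh(Y))\simeq\QCoh(\Spec_Y\Sym\omega_Y^{-1})=\QCoh(\tot_Y\omega_Y)$, which is the case $c=0$.

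On the geometric side, the $\omega_Y$-torsor $X_c$ is $\Spec_Y\mathcal{A}_c$. Here $\mathcal{A}_c=\Sym(\mathcal{E}_c)/(1_{\mathcal{E}}-1)$, and $\mathcal{E}_c$ is the extension
\[
0\to\O_Y\to\mathcal{E}_c\to\omega_Y^{-1}\to 0
\]
classified by $c\in\operatorname{Ext}^1(\omega_Y^{-1},\O_Y)=H^1(Y,\omega_Y)$. The algebra $\mathcal{A}_c$ is filtered, with associated graded $\Sym\omega_Y^{-1}$.

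\emph{Matching the deformation with the torsor.} The central step is to show that the deformed differential on the generator $\omega_Y^{-1}$ realises exactly this extension. Concretely, the dg algebra $\O_Y\langle\omega_Y^{-1}\rangle$ with differential twisted by $c$ should be quasi-isomorphic to $\mathcal{A}_c$ as a filtered sheaf of algebras. I would check this Zariski locally: on an affine open where $c$ is trivialised by a \v{C}ech cochain, both sides reduce to $\Sym\omega^{-1}$, and the transition data on overlaps are the same cocycle on each side. Given this, pushforward along the affine morphism $q$ identifies $\QCoh(X_c)$ with $\mathcal{A}_c$-modules in $\QCoh(Y)$. This yields the equivalence, and it is compatible with the functors from $\QCoh(Y)$, namely $q^*$ and the canonical functor into the completion.

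\emph{Calabi--Yau structures.} Finally, the left $(d+1)$-Calabi--Yau structures must be compared. Keller's structure is the class $\tilde c$ together with the tautological nondegeneracy. On $X_c$, the Calabi--Yau structure comes from the trivialisation of $\omega_{X_c}$ induced by $q^*\omega_Y\otimes T_{X_c/Y}\cong\O$. I would compare the two through their images in $\HH_{d+1}$ and use HKR on $X_c$.

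I expect the main obstacle to be this identification of Keller's deformation term with the extension class. One has to track signs and shifts in the HKR and Connes conventions, and one has to check that the \emph{negative cyclic} data, not just the Hochschild data, match up, since the Calabi--Yau structure depends on the former.
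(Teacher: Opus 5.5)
Your plan follows the same route the paper relies on: the lift is the Connes-sequence argument around \eqref{eq:214}, and the equivalence is the Kinjo--Masuda identification, recalled in the proof of Lemma~\ref{lemma:q* closed immersion on stacks}, of $\Pi_{d+1}(\QCoh(Y),\tilde c)$ with $\mathrm{RMod}_{\mathcal{A}_{0,c}}(\QCoh(Y))$ followed by $\mathcal{A}_{0,c}\simeq q_*\O_{X_c}$, which is your extension algebra $\mathcal{A}_c$. Some minor fixes: since $T_{X_c/Y}\cong q^*\omega_Y$, the trivialisation is $\omega_{X_c}\cong q^*\omega_Y\otimes\Omega_{X_c/Y}\cong\O$; Keller's Calabi--Yau structure is a class in $\HC^-_{d+1}$ of the completion built from $\tilde c$, not $\tilde c$ itself; and your closing worry is moot, because $\HC^-_{d+1}(X_c)\to\HH_{d+1}(X_c)$ is injective (negative cyclic homology of $X_c$ vanishes above degree $d+1$), so comparing Hochschild images via HKR suffices.
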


\begin{corollary}\label{cor:critical locus}
  Let $Y$ be a smooth variety of dimension $d$ and $c \in H^1(Y,\omega_Y)$ an exact class, say $c = \ddR c' $ for some $c' \in H^1(Y,\Omega_Y^{d-1})$. Let $X_c$ be the total space of the $\omega_Y$-torsor corresponding to $c$. There exists a function $f_c\colon \mathfrak{M}_Y \to \A^1[2-d]$ such that there exists an equivalence of $(1-d)$-shifted symplectic derived locally Artin stacks 
  \begin{equation}
  \label{eq:12}
    \mathfrak{M}_{X_c} \simeq \Crit(f_c\colon \mathfrak{M}_Y \to \A^1[2-d])
  \end{equation}
  over $\mathfrak{M}_Y$.
\end{corollary}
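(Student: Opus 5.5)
The idea is to combine Theorem \ref{thm:km cy completion} with Theorem \ref{thm:bcs}, applied to $\mathcal{C}=\QCoh(Y)$. By Theorem \ref{thm:km cy completion}, the negative cyclic lift $\tilde{c}\in\HC^-_{d-1}(\QCoh(Y))$ of $c$ satisfies $\Pi_{d+1}(\QCoh(Y),\tilde{c})\simeq\QCoh(X_c)$ over $\QCoh(Y)$, compatibly with the left $(d+1)$-Calabi--Yau structures. Passing to moduli of objects, we get an equivalence $\mathfrak{M}_{X_c}\simeq\mathfrak{M}_{\Pi_{d+1}(\QCoh(Y),\tilde{c})}$ of $(1-d)$-shifted symplectic derived stacks over $\mathfrak{M}_Y$, using the Brav--Dyckerhoff construction of the symplectic structure from the Calabi--Yau structure. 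Here one should restrict to compactly supported perfect complexes, i.e.\ the finite-type/proper part, as in \cite{KinjoMasuda2024}. It then remains to show that $\tilde{c}=\delta c''$ for some Hochschild class $c''\colon k[d-2]\to\HH(\QCoh(Y))$, and to apply Theorem \ref{thm:bcs}.

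To produce $c''$, view $c'\in H^1(Y,\Omega^{d-1}_Y)$ as a class in $\HH_{d-2}(\QCoh(Y))$ via the Hochschild--Kostant--Rosenberg isomorphism \eqref{eq:69}, since $p-q=(d-1)-1=d-2$. This matches the degree $k[d-2]\to\HH$ required in Theorem \ref{thm:bcs}. Set $c'':=c'$.

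\begin{enumerate}
\item Under HKR, the composite $(\delta-)^\natural=B$ on $\HH_{d-2}\to\HH_{d-1}$ is the de Rham differential $\ddR\colon H^1(\Omega^{d-1})\to H^1(\Omega^d)$, up to a universal nonzero normalisation constant that can be absorbed into $c'$. Hence $(\delta c')^\natural=\ddR c'=c$.
\item By the exact sequence \eqref{eq:214}, the forgetful map $(-)^\natural$ is injective on $\HC^-_{d-1}(\QCoh(Y))$. Since $\delta c'$ and $\tilde{c}$ both lie in $\HC^-_{d-1}$ and have the same image $c$, we get $\delta c'=\tilde{c}$.
\item Theorem \ref{thm:bcs} then gives $f_c:=f_{c'}\colon\mathfrak{M}_Y\to\A^1[2-d]$ with $\mathfrak{M}_{\Pi_{d+1}(\QCoh(Y),\delta c')}\simeq\Crit(f_c)$. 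Composing with the equivalence from the first paragraph yields \eqref{eq:12}.
\end{enumerate}

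Both equivalences are over $\mathfrak{M}_Y$: Kinjo--Masuda's equivalence is over $\QCoh(Y)$, i.e.\ compatible with the pushforward $q_*$, and the Bozec--Calaque--Scherotzke critical locus maps to $\mathfrak{M}_{\mathcal{C}}$ via restriction along $\mathcal{C}\to\Pi_{d+1}(\mathcal{C})$. So the composite is over $\mathfrak{M}_Y$ as well.

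The main obstacle is matching conventions. First, one must check that the connecting map $\delta$ of \cite{BozecCalaqueScherotzke2024} composed with $\natural$ really agrees with Connes' $B$ in \eqref{eq:214}, and hence with $\ddR$ under HKR, with the correct degree shifts. Second, one must check that the symplectic structure on $\Crit(f_c)$ matches the one transported from the Calabi--Yau structure on $\QCoh(X_c)$. Since both theorems identify the natural left Calabi--Yau structures determined by the same class $\tilde{c}$, I would verify this by tracing through the naturality statements rather than by recomputing anything. A minor point is the finiteness hypothesis in Theorem \ref{thm:bcs}: $\QCoh(Y)$ is not of finite type, so one works instead with the appropriate small category (e.g.\ perfect complexes), as is implicit in the definition of $\mathfrak{M}_Y$.
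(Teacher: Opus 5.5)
Your proposal is correct and follows the paper's proof essentially step for step: you identify the Kinjo--Masuda lift $\tilde c$ with $\delta c'$ using $(\delta c')^\natural=\ddR c'=c$ and the injectivity of $(-)^\natural$ on $\HC^-_{d-1}(\QCoh(Y))$ from \eqref{eq:214}, and then compose Theorem \ref{thm:km cy completion} with Theorem \ref{thm:bcs} applied to the Hochschild class $c'\in\HH_{d-2}(\QCoh(Y))$ (the paper writes the resulting function as $f_c=\ch^{(0)}(c')$). Your extra remarks on absorbing HKR normalisation constants into $c'$ and on working with a finite-type subcategory in place of $\QCoh(Y)$ address points the paper leaves implicit, and they do not change the argument.
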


\begin{proof}
  Let
  \begin{equation}
  \label{eq:192}
    \tilde{c} \in \HC^-_{d-1}(\QCoh(Y))
  \end{equation}
  be the negative cyclic lift of $c \in \HH_{d-1}(\QCoh(Y))$ appearing in Theorem \ref{thm:km cy completion}. By compatibility of the Hochschild--Kostant--Rosenberg isomorphism with the mixed differential, 
  \begin{equation}
  \label{eq:217}
    (\delta c')^\natural=d_{\mathrm{dR}}c'=c.
  \end{equation}
  Thus $\delta c'$ is also a negative cyclic lift of $c$. Since 
  \begin{equation}
  \label{eq:218}
    \HC^-_{d-1}(\QCoh(Y)) \to \HH_{d-1}(\QCoh(Y))  
  \end{equation}
  is injective, we obtain 
  \begin{equation}
  \label{eq:219}
    \tilde{c}=\delta c'.
  \end{equation}
  Then
  \begin{equation}
  \label{eq:172}
    \Pi_{d+1}(\QCoh(Y), \tilde{c}) \cong \QCoh(X_c).
  \end{equation}
  Applying Theorem \ref{thm:bcs} to the Hochschild class $c'$, we obtain a function
  \begin{equation}
   \label{eq:220}
    f_c\coloneq \ch^{(0)}(c' ) \colon \mathfrak{M}_Y \to \A^1[2-d]
  \end{equation}
  such that
  \begin{equation}
  \label{eq:173}
    \mathfrak{M}_{X_c} \simeq \mathfrak{M}_{\Pi_{d+1}(\QCoh(Y), \tilde{c})} \simeq \Crit(f_c\colon \mathfrak{M}_Y \to \A^1[2-d]).
  \end{equation}
\end{proof}

For a surface $Y = \tot_C(L)$, exactness is automatic by Lemma \ref{lem:cohomology-primitive}. We show in Section \ref{sec:proof A} that the resulting function $f_c$ is the trace pairing of the primitive of $c \in H^1(Y,\omega_Y)$ if $\deg L > 2g(C) -2$. In that case, after identifying $\mathfrak{M}_Y^{\mathrm{ss}}(r,m)$ with semistable $L$-valued Higgs bundles, we can write
\begin{equation}
\label{eq:74}
  f_c([E,\theta]) = \sum_{k\geq 1} \frac{1}{k} c_k(\tr(\theta^k)).
\end{equation}
Here $c = \sum_{k\geq1 } \iota_k(c_k)$ under \eqref{eq:H1-decomp}.
The additivity of the trace under direct sum is also the reason this function is compatible with the BPS decomposition theorem (see Section \ref{sec:BPS decomposition}).

\section{Global Critical Charts}\label{sec:global critical chart}

Let $k$ be a field of characteristic $0$.

\subsection{Global Critical Chart for Total Spaces of Line Bundles}\label{sec:proof A}
We prove Theorem \ref{thm:A} (= Theorem \ref{thm:global critical chart}).

\begin{lemma}\label{lem:cohomology-primitive}
  Let $C$ be a smooth projective curve over $k$, $L$ a line bundle on $C$, and $Y \coloneq \tot_C(L)$ its total space with projection $\pi\colon Y \to C$. 
  There is a natural decomposition
\begin{equation}\label{eq:H1-decomp}
        H^1(Y,\omega_Y)  \cong  \bigoplus_{k\geq 1}H^1(C,\omega_C\otimes L^{-k}).
\end{equation}
If $c=\sum_{k\geq 1}\iota_k(c_k)$ is the corresponding expression of a class $c\in H^1(Y,\omega_Y)$, then there exists
\begin{equation}
\label{eq:88}
  c'=\sum_{k\geq 1}\frac{1}{k}\tau_k(c_k)\in H^1(Y,\Omega_Y^1)
\end{equation}
such that $\ddR c'=c$.
\end{lemma}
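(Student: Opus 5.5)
Since $\pi\colon Y\to C$ is affine, I compute the cohomology of any quasi-coherent sheaf $\mathcal{F}$ on $Y$ as $H^i(Y,\mathcal{F}) = H^i(C,\pi_*\mathcal{F})$, and then decompose $\pi_*\mathcal{F}$ by the fibrewise weight grading coming from the $\mathbb{G}_m$-action on $Y=\Spec_C(\Sym L^\vee)$. Concretely, $\pi_*\O_Y = \bigoplus_{j\geq 0} L^{-j}$.

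\textbf{Canonical bundle.} The relative cotangent sequence $0\to \pi^*\omega_C\to \Omega^1_Y\to \Omega^1_{Y/C}\to 0$ has $\Omega^1_{Y/C}\cong \pi^*L^{-1}$, the fibre coordinate being a section of $L^\vee$. Hence $\omega_Y\cong \pi^*(\omega_C\otimes L^{-1})$ and
\[
\pi_*\omega_Y \cong \bigoplus_{j\geq 0}\omega_C\otimes L^{-1-j}.
\]
Reindexing with $k=j+1$ gives the decomposition \eqref{eq:H1-decomp}. I define $\iota_k$ as the inclusion of the weight-$k$ summand. In a local trivialisation with base coordinate $z$ and fibre coordinate $t$, $\iota_k$ sends a local form $\alpha\, dz$ to $t^{k-1}\alpha\, dz\wedge dt$.

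\textbf{The primitive $\tau_k$.} For $\Omega^1_Y$, I use the graded pushforward of the cotangent sequence, $0\to \bigoplus_j \omega_C\otimes L^{-j}\to \pi_*\Omega^1_Y\to \bigoplus_j L^{-1-j}\to 0$. I define $\tau_k\colon \omega_C\otimes L^{-k}\to \pi_*\Omega^1_Y$ to be the weight-$k$ part of the first map, that is, $\alpha\,dz\mapsto t^k\alpha\,dz$. This map is $\O_C$-linear, since $t^k\alpha$ is intrinsically a section of $L^{-k}$ pulled back and multiplied in $\Sym L^\vee$. It therefore induces a map $H^1(C,\omega_C\otimes L^{-k})\to H^1(Y,\Omega^1_Y)$.

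The key computation is that $\ddR\colon \Omega^1_Y\to\omega_Y$ is a morphism of sheaves of $k$-vector spaces which, after pushforward, satisfies
\[
\ddR(t^k\alpha\,dz)= k\,t^{k-1}\,dt\wedge\alpha\,dz=\pm k\,\iota_k(\alpha\,dz).
\]
This identity is local and independent of trivialisation, because changing $t\mapsto gt$ and $\alpha\mapsto g^{-k}\alpha$ leaves $t^k\alpha$ invariant. Thus $\ddR\circ\tau_k = k\,\iota_k$ as maps of sheaves on $C$, after fixing the orientation convention $dz\wedge dt$ to absorb the sign. Since $\ddR$ is a morphism of abelian sheaves, it induces a map on $H^1$ via Čech cohomology on an affine cover of $C$, whose preimage gives an affine cover of $Y$. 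Because $\tau_k$ and $\iota_k$ are genuine sheaf maps, the identity holds on cohomology classes, and $c'=\sum_k \frac1k\tau_k(c_k)$ satisfies $\ddR c'=c$. The sum is finite: $H^1(C,\omega_C\otimes L^{-k})\cong H^0(C,L^k)^\vee$, and $c$ lies in the direct sum anyway.

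\textbf{Main obstacle.} The delicate point is that $\ddR$ is not $\O$-linear. I must therefore check that it respects the weight decomposition of $\pi_*$, and that $H^1$ of a $k$-linear sheaf map is computed compatibly by Čech cochains. I will handle this by working with the $\pi$-affine cover and noting that $\ddR$ preserves $t$-degree. The sign convention relating $dt\wedge dz$ to the chosen isomorphism $\omega_Y\cong\pi^*(\omega_C\otimes L^{-1})$ also needs fixing once, consistently with the definition of $\iota_k$.
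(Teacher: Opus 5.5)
Your proposal is correct and follows essentially the same route as the paper. Both arguments decompose $\pi_*\omega_Y$ using $\omega_Y\cong\pi^*(\omega_C\otimes L^\vee)$, define $\tau_k$ as multiplication by the $k$-th power of the tautological fibre coordinate into $\pi^*\omega_C\subset\Omega^1_Y$, and verify $\ddR\circ\tau_k=k\,\iota_k$ by a local computation. You also handle two points the paper leaves implicit: you establish the identity at the level of abelian sheaves on $C$ because $\ddR$ is only $k$-linear, and you fix the sign convention relating $dt\wedge dz$ to the identification $\iota_k$.
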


\begin{proof}
Since $\pi$ is affine and $\omega_Y\cong \pi^*(\omega_C\otimes L^\vee)$, we have
\begin{equation}
\label{eq:87}
  \pi_*\omega_Y    \cong  (\omega_C\otimes L^\vee)\otimes \Sym L^\vee  \cong   \bigoplus_{k\geq 1}\omega_C\otimes L^{-k},
\end{equation}
which gives \eqref{eq:H1-decomp}. Let $\lambda\in H^0(Y,\pi^*L)$ be the tautological section. The map
\begin{equation}
\label{eq:86}
  \tau_k:H^1(C,\omega_C\otimes L^{-k})\longrightarrow H^1(Y,\Omega^1_Y),  \qquad    \alpha\longmapsto \pi^*\alpha\cdot \lambda^k,
\end{equation} 
uses the natural inclusion $\pi^*\omega_C\hookrightarrow \Omega^1_Y$ from the short exact sequence 
\begin{equation}
\label{eq:91}
  0 \to \pi^{*} \omega_C \to \Omega^1_Y \to \Omega^1_{Y/C} \to 0.
\end{equation}
For ease of notation, we denote the zero map by $\tau_0\colon H^1(C,\omega_C) \to H^1(Y,\Omega^1_Y)$.
In a local trivialization of $L$, with fibre coordinate $t$, the de Rham differential satisfies
\begin{equation}
\label{eq:84}
  \ddR(\pi^*\alpha\cdot t^k)=k\,\pi^*\alpha\cdot t^{k-1}dt,
\end{equation}
and the latter is precisely the $k$-th summand in $H^1(Y,\omega_Y)$:
\begin{equation}
  \label{eq:500}
  \begin{tikzcd}
    \bigoplus_{k\geq 0} H^1(C,\omega_C\otimes L^{-k}) \arrow[rr, "\sum_{k\geq 0}\tau_k"] \arrow[d,"\cdot k"]&& H^1(Y,\Omega^1_Y) \arrow[d,"\ddR"] \\
    \bigoplus_{k\geq 1} H^1(C,\omega_C \otimes L^{-k}) \arrow[rr, "\sum_{k\geq1}\iota_k", "\sim" swap]& & H^1(Y,\omega_Y).
  \end{tikzcd}
\end{equation}
Hence,
\begin{equation}
\label{eq:85}
  \ddR(\tau_k(\alpha))=k\iota_k(\alpha).
\end{equation}
\end{proof}

\begin{theorem}[= Theorem \ref{thm:A}]\label{thm:global critical chart}
  Let $C$ be a smooth projective curve over $k$, let $L$ be a line bundle on $C$, and let 
  \begin{equation}
  \label{eq:64}
    \pi\colon Y:=\tot_C(L)\longrightarrow C
  \end{equation}
  be the total space of $L$. Fix a class $c\in H^1(Y,\omega_Y)$ and denote by $q\colon X_c\to Y$ the total space of the $\omega_Y$-torsor associated to it.
  There exists a regular function 
  \begin{equation}
  \label{eq:65}
     f_c\colon \mathfrak{M}_Y \to \A^1
  \end{equation}
  on the derived moduli stack $\mathfrak{M}_Y$ of compactly supported perfect complexes on $Y$ such that there exists a natural equivalence of $(-1)$-shifted symplectic derived Artin stacks  
  \begin{equation}
  \label{eq:2}
    \mathfrak{M}_{X_c}\simeq \Crit (f_c)
  \end{equation}
  over $\mathfrak{M}_Y$. 
\end{theorem}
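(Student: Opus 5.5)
The theorem is the case $d=\dim Y=2$ of Corollary \ref{cor:critical locus}, so the plan is to check its hypotheses and read off the shifts. First, $Y=\tot_C(L)$ is a smooth surface, so $d=2$. The target $\A^1[2-d]$ is then the unshifted affine line $\A^1$, so $f_c$ is an honest regular function. Likewise the shifted symplectic degree $1-d$ equals $-1$. This matches the $(-1)$-shifted symplectic structure on $\mathfrak{M}_{X_c}$ coming from the Calabi--Yau threefold $X_c$, which is the structure transported along Theorem \ref{thm:km cy completion}. The only genuine hypothesis of Corollary \ref{cor:critical locus} is that $c$ be exact, i.e.\ $c=\ddR c'$ for some $c'\in H^1(Y,\Omega^1_Y)$.

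Exactness is exactly Lemma \ref{lem:cohomology-primitive}. Write $c=\sum_{k\ge1}\iota_k(c_k)$ using the decomposition \eqref{eq:H1-decomp}. The sum is finite, since $c$ is a single cohomology class and $H^1$ commutes with the direct sum decomposition of $\pi_*\omega_Y$ on the affine morphism $\pi$. Set
\begin{equation}
  c'=\sum_{k\ge1}\tfrac1k\tau_k(c_k).
\end{equation}
Then \eqref{eq:85} gives $\ddR c'=c$, and dividing by $k$ is legitimate because $\operatorname{char}k=0$. Under HKR, $c'$ sits in $H^1(Y,\Omega^1_Y)\subset \HH_0(\QCoh(Y))$, which is the degree $d-2=0$ required in Theorem \ref{thm:bcs}.

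Applying Corollary \ref{cor:critical locus} with this $c'$ then produces $f_c=\ch^{(0)}(c')\colon\mathfrak{M}_Y\to\A^1$ and an equivalence $\mathfrak{M}_{X_c}\simeq\Crit(f_c)$ over $\mathfrak{M}_Y$. The map to $\mathfrak{M}_Y$ is induced by $q_*$, via the structure of Theorem \ref{thm:km cy completion} as an equivalence over $\QCoh(Y)$. The equivalence preserves the symplectic structures because Theorem \ref{thm:km cy completion} matches the left Calabi--Yau structures and Theorem \ref{thm:bcs} is an equivalence of shifted symplectic stacks.

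Two points need care. The first is the finite-type hypothesis on $\mathcal{C}$ in Theorem \ref{thm:bcs}. $\QCoh(Y)$ is compactly generated, and its compact objects are the perfect complexes. The moduli stack $\mathfrak{M}_Y$ of compactly supported perfect complexes is the moduli of pseudo-perfect modules over $\mathrm{Perf}(Y)$, which is smooth since $Y$ is smooth. This is presumably the ``appropriate version'' of $\mathfrak{M}_Z$ referred to in the conventions, and I would cite Kinjo--Masuda for the same setup. The second point is naturality. The negative cyclic lift $\tilde c$ is unique by the injectivity in \eqref{eq:214}, so the category $\Pi_3(\QCoh(Y),\tilde c)$ is independent of choices. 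The function $f_c$ depends on the choice of primitive $c'$ only up to adding a class killed by $\ddR$. Different primitives give functions whose differentials agree, so $\Crit(f_c)$ is unchanged; taking the canonical $c'$ above fixes a natural choice.

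I expect the main obstacle to be this compatibility of the derived moduli framework, namely finite type and working over $\mathfrak{M}_Y$ with compact supports, rather than anything computational. If needed, I would restrict to the open substacks of sheaves with compact support, where properness over $k$ gives the required finiteness.
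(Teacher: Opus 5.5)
Your proposal is correct and is essentially the paper's proof. Lemma \ref{lem:cohomology-primitive} gives the primitive $c'=\sum_{k}\frac{1}{k}\tau_k(c_k)$, so $\delta c'$ is the unique negative cyclic lift $\tilde c$, and Corollary \ref{cor:critical locus} with $d=2$ then yields $f_c=\ch^{(0)}(c')$ and $\mathfrak{M}_{X_c}\simeq\Crit(f_c)$ over $\mathfrak{M}_Y$; your extra checks on the shifts, the choice of primitive, and the finite-type setup go beyond what the paper spells out but do not change the argument.
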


\begin{proof}
  By Lemma \ref{lem:cohomology-primitive}, every class $c\in H^1(Y,\omega_Y)$ admits a de Rham primitive $c'\in H^1(Y,\Omega^1_Y)$ with $\ddR c'=c$. Viewing $c$ as a Hochschild class 
  \begin{equation}
  \label{eq:141}
    c \in H^1(Y,\omega_Y) \subset \HH_1(\QCoh(Y)),
  \end{equation}
  via the Hochschild--Kostant--Rosenberg isomorphism, the class $\delta c'$ gives the negative cyclic lift $\tilde{c}$ required in Corollary \ref{cor:critical locus}. 
  Therefore, there is a function
  \begin{equation}
  \label{eq:77}
    f_c\colon \Mf_Y\longrightarrow \A^1
  \end{equation}
  and an equivalence of $(-1)$-shifted symplectic derived Artin stacks
  \begin{equation}
  \label{eq:171}
    \Mf_{X_c}\simeq \Crit(f_c)
  \end{equation}
  over $\mathfrak{M}_Y$.    
\end{proof}

\begin{corollary}\label{corollary:explicit function}
  Consider the setting above under the additional assumption that $\deg L>2g(C)-2$. Then, on the semistable locus identified with $L$-valued Higgs bundles, the function is given by 
  \begin{equation}
  \label{eq:5}
    f_c\left( [(E,\theta)]\right) = \sum_{k\geq 1} \frac{c_k(\tr(\theta^k))}{k},
  \end{equation}
  where $\theta^k \in H^0(C,\End(E) \otimes L^{\otimes k})$ is the $k$-th power of the Higgs field and 
  \begin{equation}
  \label{eq:169}
    \tr(\theta^k) \in H^0(C,L^{\otimes k}).
  \end{equation}
  Here $c = \sum_{k\geq1} \iota_k(c_k)$ is a finite sum, where 
  \begin{equation}
  \label{eq:82}
    c_k \in H^1(C,\omega_C\otimes L^{-k}) \cong  H^0(C,L^{\otimes k})^{\vee}
  \end{equation}
  and 
  \begin{equation}
  \label{eq:61}
    \iota_k \colon H^1(C,\omega_C\otimes L^{-k}) \hookrightarrow H^1(C,\pi_{*}\omega_Y) \cong H^1(Y,\omega_Y)
  \end{equation}
  denotes the inclusion. 
\end{corollary}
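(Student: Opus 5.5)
The function is, by construction, $f_c = \ch^{(0)}(c')$, where $c' = \sum_k \frac{1}{k}\tau_k(c_k) \in H^1(Y,\Omega^1_Y) \subset \HH_0(\QCoh(Y))$ is the primitive from Lemma \ref{lem:cohomology-primitive}. My plan is to compute $\ch^{(0)}(c')$ on a classical point $[G]$ of $\mathfrak{M}_Y$ and then translate the answer into Higgs language. The key fact I will use is that $\ch^{(0)}$ of a Hochschild class $\alpha \in \HH_0(\QCoh(Y))$, evaluated at a compactly supported perfect complex $G$, is the pairing of $\alpha$ with the Chern character of $G$. More precisely, it is the trace of the image of $\alpha$ under the map $\HH(\QCoh(Y)) \to \HH(\mathrm{End}(G)) \to k$ induced by $G$. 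Under HKR, this map is
\[
\alpha \mapsto \int_Y \alpha \cdot \ch(G) \cdot \mathrm{td}(Y),
\]
taken with compact supports. Since $\alpha \in H^1(Y,\Omega^1_Y)$ has Hodge type $(1,1)$ in the sense $p=q=1$ and $\dim Y = 2$, only the $H^1_c(Y,\Omega^1_Y)$-component of $\ch(G)\mathrm{td}(Y)$ contributes. For a sheaf supported in dimension one, this component is just the cycle class $[G]$. The first step is therefore to reduce to
\[
f_c([G]) = \langle c', \mathrm{cl}(\mathrm{supp}\,G) \rangle,
\]
the Serre duality pairing $H^1(Y,\Omega^1_Y) \otimes H^1_c(Y,\Omega^1_Y) \to k$.

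The second step is to evaluate this pairing on $\tau_k(c_k) = \pi^*c_k \cdot \lambda^k$. Functoriality lets me push the computation down to $C$. If $G$ corresponds to $(E,\theta)$, then $\pi_* G = E$ and multiplication by $\lambda$ on $G$ is $\theta$. The restriction of $\lambda^k\pi^*c_k$ to the spectral curve then pairs with $[C_a]$ via the norm/trace of $\lambda^k$ along the finite map $C_a \to C$. That trace is $\tr(\theta^k) \in H^0(C,L^k)$. It pairs with $c_k \in H^1(C,\omega_C \otimes L^{-k})$ by Serre duality on $C$, giving $c_k(\tr(\theta^k))$. Cleanly, I would justify this by the projection formula $\langle \pi^*\beta\cdot\lambda^k, \mathrm{cl}(G)\rangle = \int_C \beta \cdot \tr(\lambda^k|_{\pi_*G})$. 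This can be verified by computing $\ch^{(0)}$ directly via the trace map $\HH(\QCoh(Y)) \to \HH(\mathrm{End}(\pi_*G)) \to k$: the class $\pi^*\beta\cdot\lambda^k$ acts on $\pi_*G = E$ as $\beta\otimes\theta^k$, and its Hochschild trace is $\int_C \beta\,\tr(\theta^k)$. Summing with the factors $1/k$ gives \eqref{eq:5}.

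The main obstacle is the first step: pinning down the identification of $\ch^{(0)}$ on $\HH_0$ with the trace/Chern-character pairing, with correct normalisations and signs, and checking that Todd-class corrections vanish for degree reasons. I would first check the normalisation on a skyscraper-type test case, or directly via the categorical trace description of $\ch^{(0)}$ as the composite $\Hom(k,\HH(\mathcal{C})) \to \Hom(k,\HH(\mathrm{Perf}\,k)) = k$ along the evaluation functor at $G$. Finally, the hypothesis $\deg L > 2g-2$ is used only to identify the semistable locus with Higgs bundles as in Section \ref{sec:twisted higgs bundles}, and to ensure the duality $H^1(C,\omega_C\otimes L^{-k}) \cong H^0(C,L^k)^\vee$ is used as stated. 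The sum over $k$ is finite because $H^1(C,\omega_C\otimes L^{-k}) = 0$ for $k \gg 0$.
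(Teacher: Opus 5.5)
Your plan follows the paper's route: $f_c=\ch^{(0)}(c')$ with $c'=\sum_k\frac{1}{k}\tau_k(c_k)$, linearity in $c'$, and the identity that $\ch^{(0)}(\tau_k(c_k))$ takes the value $c_k(\tr(\theta^k))$ at $(E,\theta)$. The paper does not prove this identity itself. It imports it from the calculation in \cite[Proposition 5.7]{KinjoMasuda2024}, and your second argument (the trace of $\tau_k(c_k)$ computed on $E=\pi_*G$, where it acts through $c_k\otimes\theta^k$) is what that citation supplies.

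Phrase the trace as you do at the end, along the evaluation functor $P\mapsto\mathrm{RHom}(P,G)$ into $\mathrm{Perf}(k)$. A compactly supported $G$ does not in general induce a map $\HH(\QCoh(Y))\to\HH(\End(G))$, since $\mathrm{RHom}(G,P)$ need not be a perfect $\End(G)$-module.

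Your Riemann--Roch detour through $\ch_1(G)$ is an optional extra. Its merit is that it shows directly that $f_c$ depends only on the support cycle, hence factors through $B_Y$ as in Remark~\ref{rem:factor-through-hitchin}. If you use it, remember that $Y$ is not proper and the classes $\pi^*c_k\cdot\lambda^k$ need not extend to $\mathbb{P}(\O\oplus L)$. You would therefore need a version with supports rather than the projective Hirzebruch--Riemann--Roch.

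One assertion is wrong, although the fix is easy. The sum $\sum_k\iota_k(c_k)$ is not finite because $H^1(C,\omega_C\otimes L^{-k})=0$ for $k\gg0$. By Serre duality this group is $H^0(C,L^{\otimes k})^{\vee}$, which is nonzero for all large $k$ whenever $\deg L>0$. Moreover, $\deg L>0$ is forced by $\deg L>2g(C)-2$ as soon as $g(C)\geq1$, so $H^1(Y,\omega_Y)$ is typically infinite-dimensional.

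Finiteness comes instead from the decomposition \eqref{eq:H1-decomp}. Since $C$ is noetherian, $H^1(C,-)$ commutes with the direct sum $\pi_*\omega_Y\cong\bigoplus_{k\geq1}\omega_C\otimes L^{-k}$, so any class in $H^1(Y,\omega_Y)$ has only finitely many nonzero components $c_k$. Similarly, Serre duality on $C$ holds for every $L$, so the degree hypothesis is not needed for the identification $H^1(C,\omega_C\otimes L^{-k})\cong H^0(C,L^{\otimes k})^\vee$.
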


\begin{proof}
  The function is the trace of the primitive,
\begin{equation*}
        f_c=\ch^{(0)}(c')
        =\ch^{(0)}\!\left(\sum_{k\geq 1}\frac{1}{k}\tau_k(c_k)\right),
\end{equation*}
where $c=\sum_{k\geq 1}\iota_k(c_k)$ under \eqref{eq:H1-decomp}. The final formula \eqref{eq:5} follows from the same calculation as in \cite[Proposition 5.7]{KinjoMasuda2024}: the class $\tau_k(c_k)$ pairs with the $k$-th power of the Higgs field by $c_k(\tr(\theta^k))$, and the primitive $c'$ contributes the factor $1/k$.
\end{proof}

\subsection{Semistability}\label{sec:semistability}

\begin{proposition}\label{prop:semistable}
  In the setting above, under the additional assumption that $\deg L > 2g(C)-2$, there is an equivalence of $(-1)$-shifted symplectic derived Artin stacks
  \begin{equation}
  \label{eq:3}
    \mathfrak{M}_{X_c}^{\mathrm{ss}}(r,m) \simeq \Crit \left(f_c|_{\mathfrak{M}_Y^{\mathrm{ss}}(r,m)}\right).
  \end{equation}
\end{proposition}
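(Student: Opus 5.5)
The plan is to restrict the global equivalence of Theorem \ref{thm:global critical chart} to open substacks. Since $\mathfrak{M}_{X_c}\simeq \Crit(f_c)$ holds \emph{over} $\mathfrak{M}_Y$, the structure map $\Crit(f_c)\to\mathfrak{M}_Y$ corresponds to pushforward $q_*\colon \mathfrak{M}_{X_c}\to\mathfrak{M}_Y$. For any open substack $\mathfrak{U}\subset\mathfrak{M}_Y$ we have $\Crit(f_c|_{\mathfrak{U}})\simeq \Crit(f_c)\times_{\mathfrak{M}_Y}\mathfrak{U}$ as $(-1)$-shifted symplectic derived stacks, because forming the derived critical locus is local on the base in the smooth/open topology and the symplectic form restricts. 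Hence
\begin{equation}
\Crit\left(f_c|_{\mathfrak{M}_Y^{\mathrm{ss}}(r,m)}\right)\simeq (q_*)^{-1}\left(\mathfrak{M}_Y^{\mathrm{ss}}(r,m)\right)\subset \mathfrak{M}_{X_c}.
\end{equation}
The proposition then reduces to a classical statement: a compactly supported perfect complex $F$ on $X_c$ lies in $\mathfrak{M}_{X_c}^{\mathrm{ss}}(r,m)$ if and only if $q_*F$ lies in $\mathfrak{M}_Y^{\mathrm{ss}}(r,m)$. Both substacks are open, and openness is checked on classical points.

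For the point-wise comparison, I will use that $q$ is affine. Then $q_*$ is exact and conservative on quasi-coherent sheaves, and $\QCoh(X_c)$ is identified with $q_*\O_{X_c}$-modules on $Y$. Consequently $F$ is a sheaf if and only if $q_*F$ is one. Moreover, $F$ has compact support if and only if $q_*F$ does, since a closed subset of $X_c$ with proper image under the affine map $q$ is proper. Purity of dimension one also transfers: a zero-dimensional subsheaf of $F$ pushes forward to one of $q_*F$, and conversely the $\O_{X_c}$-submodule generated by a zero-dimensional subsheaf of $q_*F$ remains zero-dimensional. Finally, $\chi(F)=\chi(q_*F)$ and $(\pi\circ q)_\#[F]=\pi_\#[q_*F]$, so the discrete invariants match.

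The main obstacle is semistability. I will argue that Gieseker semistability is governed by slopes $\chi(G)/r(G)$, where $r(G)$ denotes the multiplicity of $G$ along $[C]$ after pushforward to $C$; this is how the paper's convention on $X_c$ is set up. Subsheaves of $F$ correspond to $q_*\O_{X_c}$-stable subsheaves of $q_*F$, so semistability of $q_*F$ implies that of $F$.

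The converse is the hard direction: a destabilising subsheaf of $q_*F$ need not be $q_*\O_{X_c}$-stable. I would first try a Harder--Narasimhan argument. The maximal destabilising subsheaf $G\subset q_*F$ is unique, and I claim it is preserved by the action of $q_*\O_{X_c}$. Locally, $q_*\O_{X_c}$ is filtered by $\Sym^{\le n}\omega_Y^{\vee}$ with $\omega_Y$ trivialised Zariski locally. The action of a local fibre coordinate $s$ gives a map $G\to q_*F/G$ twisted by $\omega_Y^\vee$. Since $\omega_Y\cong\pi^*(\omega_C\otimes L^\vee)$ and $\omega_Y$ restricted to one-dimensional sheaves supported near $C$ behaves like a degree-zero twist on the relevant $\chi$, this map vanishes by the slope inequality $\mu_{\max}(q_*F/G)<\mu(G)$. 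This uses the gluing of the affine torsor structure via the class $c$ and needs care. If the twist is not degree zero, I would instead use that the support is compact, so $\omega_Y$ restricted to the support is trivialisable after fixing the torsor. Once this is established, $G$ is a $q_*\O_{X_c}$-submodule, giving an equivalence of semistable loci. Together with the open restriction above, this yields \eqref{eq:3}.
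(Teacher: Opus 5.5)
Your architecture is the same as the paper's. You base-change $\mathfrak{M}_{X_c}\simeq\Crit(f_c)$ over $\mathfrak{M}_Y$ to the open substack $\mathfrak{M}_Y^{\mathrm{ss}}(r,m)$ and push forward a destabilising subsheaf of $F$ for the easy direction. For the converse, you show that the maximal destabilising subsheaf $G_1\subset G=q_*F$ is a $q_*\O_{X_c}$-submodule.

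\textbf{The gap.} It sits in the one step where the hypothesis $\deg L>2g(C)-2$ must enter, and your proposal never uses that hypothesis. Your claim that twisting by $\omega_Y$ ``behaves like a degree-zero twist'' is false. Your fallback is also false: $\omega_Y|_D$ is a fixed line bundle independent of $c$, and it is not trivialisable on compact curves. On the zero section, $\omega_Y|_C\cong\omega_C\otimes L^\vee$ has degree $2g(C)-2-\deg L\neq 0$.

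The correct input, which the paper uses, is a one-line computation. For a one-dimensional sheaf $G'$ on $Y$ of class $r'[C]$ and a line bundle $M$ on $C$,
\[
\chi(G'\otimes\pi^*M)=\chi(\pi_*G'\otimes M)=\chi(G')+r'\deg M.
\]
So twisting by $\omega_Y\cong\pi^*(\omega_C\otimes L^\vee)$ shifts every slope, and hence $\mu_{\max}$, by $2g(C)-2-\deg L<0$. With $Q=G/G_1$ this gives $\mu_{\max}(Q\otimes\omega_Y)<\mu_{\max}(Q)<\mu(G_1)$. Therefore
\[
\Hom(\omega_Y^\vee\otimes G_1,Q)=\Hom(G_1,Q\otimes\omega_Y)=0.
\]

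The direction matters. The obstruction lands in $Q\otimes\omega_Y$, not in $Q\otimes\omega_Y^\vee$ as your phrase ``twisted by $\omega_Y^\vee$'' may suggest. With $\omega_Y^\vee$ the slope shift would be positive and the vanishing would fail, which is exactly why the degree hypothesis is needed. Your inference would be valid for any twist of degree $\le 0$, since strictness comes from the Harder--Narasimhan inequality. The problem is that you asserted the wrong degree and never checked the sign.

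\textbf{Two smaller points.}
\begin{enumerate}
\item Your worry about gluing local fibre coordinates is unnecessary. Let $A_c=q_*\O_{X_c}$. There is a global extension $0\to\O_Y\to F_1A_c\to\omega_Y^\vee\to0$, and $F_1A_c$ generates $A_c$. Since $\O_Y\cdot G_1\subset G_1$, the composite $F_1A_c\otimes G_1\to G\to Q$ descends to a global map $\omega_Y^\vee\otimes G_1\to Q$. No trivialisation or choice of coordinate is needed, and the class $c$ plays no role in this step.
\item Your statement that a closed subset of $X_c$ with proper image under $q$ is proper is false, since the fibres of $q$ are affine lines. What is true is that coherence of $q_*F$ forces $\mathrm{supp}\,F\to Y$ to be finite. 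This bookkeeping is in any case already contained in the global equivalence over $\mathfrak{M}_Y$, so only the matching of semistable loci needs checking.
\end{enumerate}
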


\begin{proof}
  Let $A_c \coloneq q_{*}\O_{X_c}$. Let $F$ be a compactly supported pure $1$-dimensional sheaf on $X_c$.
  As $q$ is affine, $q_{*}$ identifies coherent sheaves on $X_c$ with coherent $A_c$-modules on $Y$: 
  \begin{equation}\label{eq:80}
  \left\{
  \begin{array}{c}
    \textrm{coherent sheaves} \\ \textrm{ on } X_c
  \end{array}\right\} \longleftrightarrow
  \left\{  \begin{array}{c}
    \textrm{coherent } A_c \textrm{-modules} \\ \textrm{ on } Y
  \end{array}\right\}
  \end{equation}
  If $F$ is a compactly supported pure $1$-dimensional sheaf on $X_c$, write $G\coloneq q_{*}F$ for the corresponding $A_c$-module. As $q$ is affine, $\chi(F) = \chi(G)$. Moreover, the fundamental cycle of $G$ is $q_{\#}[F]$. Hence, the slope is preserved under \eqref{eq:80}. Moreover, subsheaves of $F$ are exactly $A_c$-submodules of $G$.
  By Theorem \ref{thm:global critical chart}, we only have to check that the semistable loci match under $q_{*}$, i.e., $F$ is semistable if and only if $G \coloneq q_{*}F$ is semistable.

  If $F$ is not semistable, it has a destabilising subsheaf $F'\subset F $. The pushforward $q_{*}F'  \subset q_{*}F = G$ of the destabilising subsheaf is an $A_c$-submodule of $G$. In particular, it is an $\O_Y$-subsheaf of $G$. Since slope is preserved under the identification \eqref{eq:80}, $q_{*}F' $ destabilises $G$. 
  
  Conversely, assume that $G$ is not semistable as an $\O_Y$-module. Let
  \begin{equation}
  \label{eq:11}
    0 = G_0 \subset  G_1\subset G_2 \subset \ldots \subset G_n = G,
  \end{equation}
  be its Harder--Narasimhan filtration. Let $Q\coloneq G/G_1$. Then
  \begin{align}
  \label{eq:13}
    \mu(G_1) > \mu(G_2/G_1) = \mu_{\mathrm{max}} (Q)     \implies & \mu(G_1) > \mu_{\mathrm{max}}(Q\otimes \omega_Y)\\ 
    \label{eq:16}
    \implies & \Hom(G_1, Q\otimes \omega_Y) \simeq \Hom(\omega_Y^{\vee} \otimes G_1, Q) = 0.
  \end{align}
  The implication \eqref{eq:13} holds because $\deg L > 2g(C) -2$. Indeed, because $\omega_Y\simeq \pi^{*}(\omega_C\otimes L^{\vee})$ and $\deg L > 2g(C) - 2$,  $\deg (\omega_C\otimes L^{\vee}) <0$. Thus,
  \begin{equation}
  \label{eq:20}
  \mu_{\mathrm{max}} (Q\otimes \omega_Y) = \mu_{\mathrm{max}} (Q) + \deg (\omega_C\otimes L^{\vee}) < \mu_{\mathrm{max}}(Q).
  \end{equation}
  We now show that $G_1$ is an  $A_c$-submodule of $G$. It is enough to prove that
  \begin{equation}
  \label{eq:17}
    A_c \otimes G_1 \to G \to Q
  \end{equation}
  is $0$. The degree filtration on $A_c$ satisfies
  \begin{equation}
  \label{eq:18}
    F_0A_c=\O_Y\subset F_1A_c\subset\ldots\subset F_kA_c\subset\ldots,  \qquad \gr_k^F A_c\simeq\omega_Y^{-k},
  \end{equation}
  and $F_1A_c$ generates $A_c$ as an $\O_Y$-algebra. In particular, $\bigcup_{k\geq0}F_kA_c=A_c$ and $F_1A_c/F_0A_c\simeq\omega_Y^\vee$. It therefore suffices to show that $F_1A_c\cdot G_1\subset G_1$. Let 
  \begin{equation}
  \label{eq:81}
    \beta\colon F_1 A_c \otimes G_1 \to G \to  Q
  \end{equation}
  denote the composition. 
  As $G_1$ is an $\O_Y$-submodule of $G$, $\O_Y\cdot G_1 \subset G_1$, so $\beta|_{\O_Y\otimes G_1} \equiv 0$.  By the right-exact sequence 
  \begin{equation}
  \label{eq:21}
    \O_Y\otimes G_1 \to F_1A_c \otimes G_1 \to \omega_Y^{\vee} \otimes G_1 \to 0,
  \end{equation}
  the morphism $\beta \colon F_1 A_c \otimes G_1 \to Q$ descends to a map $\overline{\beta}\colon \omega_Y^{\vee} \otimes G_1 \to Q$.
  By \eqref{eq:16}, any morphism $\omega_Y^{\vee}\otimes G_1 \to Q$ vanishes, so $F_1A_c \cdot G_1 \subset G_1$. Since $F_1A_c$ generates $A_c$, $A_c \cdot G_1 \subset G_1$. Thus, $G_1$ is an $A_c$-submodule of $G$ and therefore corresponds to a subsheaf  of $F$ on $X_c$. Since $\mu(G_1) > \mu(G)$, the subsheaf of $F$ corresponding to $G_1$ destabilises $F$.

  Because the equivalence in Theorem \ref{thm:global critical chart} is over $\mathfrak{M}_Y$, 
  \begin{equation}
  \label{eq:199}
    \mathfrak{M}_{X_c} \times_{\mathfrak{M}_Y} \mathfrak{M}_Y^{\mathrm{ss}}(r,m) \simeq \Crit(f_c) \times_{\mathfrak{M}_Y} \mathfrak{M}_Y^{\mathrm{ss}}(r,m).
  \end{equation}
  The left-hand side is precisely $\mathfrak{M}_{X_c}^{\mathrm{ss}}(r,m)$ and the right-hand side is $\Crit\left(f_c|_{ \mathfrak{M}_Y^{\mathrm{ss}}(r,m)}\right)$. 
\end{proof}

\begin{remark}\label{rem:factor-through-hitchin}
  On the fixed semistable sheaf locus $\Mf_Y^{\mathrm{ss}}(r,m)$, the restriction of $f_c$ factors through the Hitchin base. Thus,
\begin{equation}
\label{eq:60}
\begin{tikzcd}
  \mathfrak{M}_{Y}^{\mathrm{ss}}(r,m)\arrow[dr,"\tilde{h}_Y" swap] \arrow[rr, "f_c"] && \A^1 \\
  & B_Y \arrow[ur, "\tilde{f}_c" swap] &
\end{tikzcd}
\end{equation}
with
\begin{equation}
\label{eq:63}
  \widetilde f_c\left((\tr(\theta^k))_{k=1}^r\right)        =\sum_{k\geq 1}\frac{c_k(\tr(\theta^k))}{k}.
\end{equation}
\end{remark}

\begin{proposition}\label{prop:naturally oriented}
  Let $k = \C$. The stack $\mathfrak{M}_{X_c}^{\mathrm{ss}}(r,m)$ has a naturally oriented d-critical structure. 
\end{proposition}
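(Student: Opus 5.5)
The plan is to obtain the d-critical structure and orientation from the global critical chart of Proposition \ref{prop:semistable}, and then compare it with the one coming from the Calabi--Yau threefold $X_c$.

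\textbf{Step 1: d-critical structure.} By Proposition \ref{prop:semistable} there is an equivalence of $(-1)$-shifted symplectic derived Artin stacks
\begin{equation}
  \mathfrak{M}_{X_c}^{\mathrm{ss}}(r,m)\simeq \Crit\left(f_c|_{\mathfrak{M}_Y^{\mathrm{ss}}(r,m)}\right),
\end{equation}
where $\mathfrak{M}_Y^{\mathrm{ss}}(r,m)$ is a smooth Artin stack. Smoothness holds because $\deg L>2g(C)-2$: for semistable $G$ one has $\mathrm{Ext}^2_Y(G,G)\simeq \Hom(G,G\otimes\omega_Y)^\vee=0$. Indeed $\omega_Y\simeq\pi^*(\omega_C\otimes L^\vee)$ has negative degree, so this is the same slope argument as in \eqref{eq:16}. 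Taking the classical truncation and applying \cite[Theorem 3.18]{BenBassatBravBussiJoyce2015} then gives a d-critical structure $s$ on the classical stack. This $s$ coincides with the one induced by the PTVV $(-1)$-shifted symplectic structure on sheaves on $X_c$, because the equivalence is one of $(-1)$-shifted symplectic stacks (Theorem \ref{thm:km cy completion} matches the Calabi--Yau structures).

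\textbf{Step 2: orientation.} Set $U:=\mathfrak{M}_Y^{\mathrm{ss}}(r,m)$, which is smooth. By the example following Theorem \ref{thm:global sheaf of vanishing cycles} (via \cite[Lemma A.10]{KinjoKoseki2026}), $t_0\Crit(f_c|_U)$ carries a canonical orientation with orientation line $K_U|_{\mathfrak{X}_{\red}}$. It is realised by the canonical isomorphism $K_U^{\otimes 2}|_{\mathfrak{X}_{\red}}\xrightarrow{\sim}K^{\vir}_{\mathfrak{X},s}$. This follows because for a derived critical locus $\mathbb{L}_{\Crit}\simeq[\mathbb{T}_U[1]\to\mathbb{L}_U]$ restricted, so that $\det\mathbb{L}_{\Crit}\simeq K_U^{\otimes 2}$. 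Transporting this orientation along the equivalence gives the natural orientation $o$ on $\mathfrak{M}^{\mathrm{ss}}_{X_c}(r,m)$.

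\textbf{Step 3: naturality.} I would check that $o$ is independent of the choice of primitive $c'$. The lift $\tilde c=\delta c'$ is unique by injectivity in \eqref{eq:214}, and the orientation line $K_U$ depends only on $U$, not on $f_c$. It is also worth noting that $o$ agrees with the orientation obtained from $\det$ of the universal complex pushed forward along $q$. This is because $K_U=\det\big(R\mathcal{H}om_{\pi_Y}(\mathcal{G},\mathcal{G})[1]\big)^\vee$ restricts compatibly.

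The main obstacle is Step 1: ensuring the identification of d-critical structures on the truncation is compatible with the intrinsic one on $\mathfrak{M}_{X_c}$. My approach is to work entirely through the derived equivalence and never compare charts by hand. Since Proposition \ref{prop:semistable} is an equivalence of $(-1)$-shifted symplectic derived stacks, functoriality of \cite[Theorem 3.18]{BenBassatBravBussiJoyce2015} under symplectic equivalences gives the compatibility directly.
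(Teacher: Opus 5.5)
Your proposal is correct and follows the paper's proof. The paper likewise uses Proposition \ref{prop:semistable} to identify the d-critical structure with the one coming from the critical-locus description, and then takes the canonical orientation with orientation line $K_U|_{Z_{\red}}$, $U=\mathfrak{M}_Y^{\mathrm{ss}}(r,m)$, from \cite[Lemma A.10]{KinjoKoseki2026}; your Serre-duality argument that $\mathrm{Ext}^2_Y(G,G)\simeq\Hom(G,G\otimes\omega_Y)^\vee=0$, which gives the smoothness of $U$ that this lemma requires, is a welcome addition the paper leaves implicit, while your naturality remarks in Step 3 are not needed for the statement.
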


\begin{proof}
  By Proposition \ref{prop:semistable}, the d-critical structure on  $\mathfrak{M}_{X_c}^{\mathrm{ss}}(r,m)$ coming from its $(-1)$-shifted symplectic structure agrees with the d-critical structure given by the classical truncation of the critical locus description. 
  By \cite[Lemma A.10]{KinjoKoseki2026}, if $Z = \Crit(f)$ for a function $f$ on a smooth Artin stack $U$, then there is a natural orientation 
  \begin{equation}
  \label{eq:103}
    o\colon K^{\otimes 2}_U |_{Z_{\red}} \xlongrightarrow{\sim} K^{\vir}_{Z,s}.
  \end{equation}
  Applying this to $Z = \mathfrak{M}_{X_c}^{\mathrm{ss}}(r,m)$ and $U = \mathfrak{M}_Y^{\mathrm{ss}}(r,m)$, we get a natural orientation on $\mathfrak{M}_{X_c}^{\mathrm{ss}}(r,m)$. 
\end{proof}

\subsection{Global Critical Chart for Curve Fibration over $\A^1$}
Let $k = \C$ and let $S$ be a smooth complex algebraic surface with a proper surjective morphism $\pi \colon S \to \A^1$  with connected one-dimensional fibres. The aim of this section is to show that the  morphism $H^1(S, \Omega^1_S) \to H^1(S,\omega_S)$ induced by the de Rham differential is surjective and therefore, the moduli space of sheaves is the critical locus of a function.

\begin{lemma}\label{lemma:H3 vanishes}
  For $S$ as above, $H^3_{\mathrm{dR}}(S) = 0$.
\end{lemma}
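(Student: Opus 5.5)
By the algebraic de Rham comparison theorem, $H^3_{\mathrm{dR}}(S) \cong H^3(S(\C),\C)$, so it suffices to show that the singular cohomology $H^3(S(\C),\Q)$ vanishes. The plan is to deduce this from the topology of the fibration $\pi\colon S(\C)\to \C$. Since $\pi$ is proper, the Leray spectral sequence
\begin{equation*}
E_2^{p,q}=H^p(\C, R^q\pi_*\Q)\Longrightarrow H^{p+q}(S(\C),\Q)
\end{equation*}
has constructible sheaves $R^q\pi_*\Q$. The fibres are compact complex curves, which may be singular or non-reduced, so these sheaves vanish for $q\geq 3$ and $q<0$. The entries contributing to $H^3$ are therefore $E_2^{1,2}$, $E_2^{2,1}$ and $E_2^{3,0}$.

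Since $\C$ has real dimension $2$, the term $E_2^{3,0}$ vanishes. It remains to show that $H^1(\C,R^2\pi_*\Q)=0$ and $H^2(\C,R^1\pi_*\Q)=0$.

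For the term $H^2(\C,\mathcal{F})$ with $\mathcal{F}=R^1\pi_*\Q$, I would use that a constructible sheaf on the non-compact Stein curve $\C$ has no degree-$2$ cohomology. Concretely, let $j\colon U\hookrightarrow\C$ be the open set over which $\pi$ is smooth, with finite complement $\Sigma$. The sequence $0\to j_!j^*\mathcal{F}\to\mathcal{F}\to \mathcal{F}|_\Sigma\to 0$ reduces the claim to $H^2(\C,j_!\mathcal{L})=0$ for a local system $\mathcal{L}$ on $U$. Since $H^2(\C,j_!\mathcal{L})\cong H^2_c$-type terms can be compared with $H^2(U,\mathcal{L})$ via the long exact sequence in the other direction, and $U$ is a non-compact Riemann surface homotopy equivalent to a wedge of circles, $H^2(U,\mathcal{L})=0$; the comparison involves only skyscraper terms in degree $\leq 1$.

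For the term $H^1(\C,R^2\pi_*\Q)$, I would use proper base change and the hypothesis that the fibres are connected. This gives $(R^2\pi_*\Q)_t = H^2(S_t,\Q) \cong \Q^{\#\text{irreducible components of } S_t}$. On $U$ this is the constant sheaf $\Q$. At a point $t\in\Sigma$, the specialisation map $H^2(S_t)\to H^2(S_{t'})$ sends each component class to its multiplicity, so the map is surjective. Hence $R^2\pi_*\Q$ is an extension of the constant sheaf $\Q$ (more precisely, it contains a subsheaf mapping onto $j_*\Q=\Q$) by skyscraper sheaves. Its $H^1$ then vanishes, because $H^1(\C,\Q)=0$ and skyscrapers have no higher cohomology.

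Only the vanishing of the $E_2$ terms is needed, not the degeneration of the spectral sequence. The main obstacle I expect is making the structure of $R^2\pi_*\Q$ precise at the singular fibres. One needs both the surjectivity of the specialisation map, which requires care with non-reduced components, and the identification of $R^2\pi_*\Q$ with $j_*$ of a trivial local system plus skyscrapers. If this becomes messy, a fallback is Poincaré duality, $H^3(S)\cong H^1_c(S)^\vee$, combined with the fact that $H^1_c(S)$ vanishes because $H^0(S)\to H^0(\text{end})$ is injective and the end of $S$ is $\pi^{-1}$ of a neighbourhood of infinity, which is connected. The fallback requires the additional input that $H^1(S)\to H^1(\partial S)$ is injective, and I would try to establish this first by the same Leray analysis over the boundary circle.
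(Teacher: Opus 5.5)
Your overall structure matches the paper's. You use the Leray spectral sequence for $\pi$, observe $E_2^{3,0}=0$, and get $H^1(\C,R^2\pi_*\Q)=0$ from a sequence $0\to\mathcal{K}\to R^2\pi_*\Q\to\Q_{\C}\to 0$ with skyscraper kernel. Your multiplicity argument for surjectivity at the singular fibres justifies this step better than the paper does. State it as surjectivity of the adjunction map $R^2\pi_*\Q\to j_*j^*R^2\pi_*\Q\cong\Q_{\C}$, whose stalk at $t\in\Sigma$ is the specialisation map, rather than as ``containing a subsheaf''.

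The gap is in the step $H^2(\C,R^1\pi_*\Q)=0$. Your concrete argument uses only $H^2(U,\mathcal{L})=0$. However, the triangle $j_!\mathcal{L}\to Rj_*\mathcal{L}\to i_*i^*Rj_*\mathcal{L}$ gives the exact sequence
\[
H^1(U,\mathcal{L})\longrightarrow\bigoplus_{s\in\Sigma}H^1(D_s^{*},\mathcal{L})\longrightarrow H^2(\C,j_!\mathcal{L})\longrightarrow H^2(U,\mathcal{L})=0,
\]
where $D_s^{*}$ is a small punctured disc around $s$. So the degree-$1$ skyscraper terms are exactly what can feed into $H^2$, and they are nonzero in general. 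The same reasoning would prove $H^2(\mathbb{P}^1,j_!\Q_U)=0$ for $U=\mathbb{P}^1\setminus\Sigma$. That $U$ is also non-compact and a wedge of circles with $H^2(U,\Q)=0$, yet $H^2(\mathbb{P}^1,j_!\Q_U)=H^2_c(U,\Q)\cong\Q$. The missing input is the non-compactness of $\C$ itself, not of $U$. Relatedly, $H^2(\C,j_!\mathcal{L})$ is not an $H^2_c$-type term, because $\C$ is not compact.

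There are two ways to close the gap.
\begin{itemize}
\item Cite Artin vanishing: $H^i(\A^1,\mathcal{F})=0$ for $i>1$ and $\mathcal{F}$ constructible. This is exactly what the paper does, and your opening sentence of that paragraph already states it.
\item Prove the needed surjectivity directly. $\pi_1(U)$ is free on loops $\gamma_s$ around the points $s\in\Sigma$. If $\mathcal{L}$ has fibre $V$ and monodromies $T_s$, then $H^1(U,\mathcal{L})\cong\bigl(\bigoplus_{s\in\Sigma}V\bigr)/\{((T_s-1)v)_s : v\in V\}$. This visibly surjects onto $\bigoplus_s V/(T_s-1)V\cong\bigoplus_s H^1(D_s^{*},\mathcal{L})$.
\end{itemize}
With either repair your proof coincides with the paper's, and the Poincar\'e-duality fallback is unnecessary.
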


\begin{proof}
  By the Leray spectral sequence for $\pi\colon S \to \A^1$, it suffices to show that all $E_2^{p,q} = 0$ for $p+q = 3$ as this implies that
  \begin{equation}
  \label{eq:537}
    H^3_{\mathrm{dR}}(S) \simeq H^3(S,\C) = 0.
  \end{equation}
  Because the fibres of $\pi$ are curves, $R^q \pi_{*} \C = 0$ for all $q>2$. Moreover, as the fibres are connected, $R^0\pi_{*}\C \cong \C_{\A^1}$, and hence $H^3(\A^1(\C), R^0\pi_{*}\C) = 0$.
  Artin's vanishing theorem  implies that $H^2(\A^1(\C), R^1\pi_{*}\C) = 0$.
  Therefore,  we are left to show that 
  \begin{equation}
  \label{eq:538}
     H^1(\A^1(\C), R^2\pi_{*}\C) = 0.
  \end{equation}
  Over the smooth locus $U \subset \A^1(\C)$, 
  \begin{equation}
  \label{eq:541}
   \forall t \in U: \qquad    H^2(S_t,\C) \simeq \C .
  \end{equation}
  Therefore, 
  \begin{equation}
  \label{eq:546}
    R^2\pi_{*} \C|_U\simeq \C_U.
  \end{equation}
  Over a singular fibre $S_{t_0}$, $H^2(S_{t_0},\C)$ is the direct sum of copies of $\C$, one copy for each irreducible component of $S_{t_0}$. There is a short exact sequence 
  \begin{equation}
  \label{eq:549}
    0 \to \mathcal{K} \to R^2\pi_{*}\C \xrightarrow{\tr} \C_{\A^1} \to 0,
  \end{equation}
  where the kernel $\mathcal{K}$ is a skyscraper sheaf supported on the reducible singular fibres. As $\A^1(\C)$ is contractible, 
  \begin{equation}
  \label{eq:550}
    H^1( \A^1(\C), \C_{\A^1}) = 0.
  \end{equation}
  And as $\mathcal{K}$ is a skyscraper sheaf, 
  \begin{equation}
  \label{eq:551}
    H^1(\A^1(\C), \mathcal{K}) = 0.
  \end{equation}
  Therefore, by the long exact sequence in cohomology, 
  \begin{equation}
  \label{eq:552}
    H^1(\A^1(\C), R^2\pi_{*}\C) = 0.
  \end{equation}
\end{proof}

\begin{lemma}\label{lemma:H2 vanishes}
  In the setting above, for all coherent sheaves $\mathcal{F}$ on $S$, 
  \begin{equation}
  \label{eq:66}
    H^2(S, \mathcal{F}) = 0.
  \end{equation}
\end{lemma}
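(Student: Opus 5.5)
We want $H^2(S,\mathcal{F})=0$ for every coherent sheaf $\mathcal{F}$ on $S$. The plan is to push forward along $\pi$ and use the Leray spectral sequence. The base $\A^1$ is affine, so higher cohomology of quasi-coherent sheaves on it vanishes. The proper morphism $\pi$ has one-dimensional fibres, so higher direct images vanish above degree one.

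\textbf{Step 1.} Since $\pi$ is proper, the sheaves $R^q\pi_*\mathcal{F}$ are coherent on $\A^1$ for all $q$.

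\textbf{Step 2.} Since every fibre of $\pi$ has dimension $1$, the theorem on formal functions shows that $R^q\pi_*\mathcal{F}=0$ for $q\geq 2$. Concretely, the completion of $R^q\pi_*\mathcal{F}$ at a point $t$ is $\varprojlim_n H^q(S_{t,n},\mathcal{F}|_{S_{t,n}})$, where $S_{t,n}$ are the infinitesimal thickenings of the fibre $S_t$. Each $S_{t,n}$ is a one-dimensional proper scheme, so by Grothendieck vanishing these groups are zero for $q\geq2$. A coherent sheaf on $\A^1$ whose completions vanish at every point is itself zero.

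\textbf{Step 3.} Because $\A^1$ is affine, Serre's vanishing theorem gives $H^p(\A^1,\mathcal{G})=0$ for $p>0$ and every quasi-coherent sheaf $\mathcal{G}$.

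\textbf{Step 4.} In the Leray spectral sequence $E_2^{p,q}=H^p(\A^1,R^q\pi_*\mathcal{F})\Rightarrow H^{p+q}(S,\mathcal{F})$, Step 3 leaves only the column $p=0$. The sequence therefore degenerates, and
\begin{equation}
  H^2(S,\mathcal{F})\cong H^0(\A^1,R^2\pi_*\mathcal{F}),
\end{equation}
which vanishes by Step 2.

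The only real point is Step 2, and it is standard: this is the relative version of Grothendieck vanishing for a proper morphism of relative dimension $\leq 1$ (Hartshorne III, Corollary 11.2). The connectedness of the fibres and the surjectivity of $\pi$ are not needed here; only properness and the fibre dimension are used.
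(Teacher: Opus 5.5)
Your proof is correct and is the same argument as the paper's: the Leray spectral sequence for $\pi$, acyclicity of the affine base $\A^1$, and $R^q\pi_*\mathcal{F}=0$ for $q\geq 2$ because the fibres are one-dimensional; your Step 2 spells out what the paper leaves implicit. One small point on the citation: Hartshorne's Corollary III.11.2 is stated for projective morphisms, so for a merely proper $\pi$ you should cite the proper version of the theorem on formal functions (EGA III, 4.1.5).
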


\begin{proof}
 Because $\pi\colon S \to \A^1$ has relative dimension $1$ and $\A^1$ is affine, every term of total degree $2$ in the Leray spectral sequence vanishes.  Thus, $H^2(S, \mathcal{F}) = 0$. 
\end{proof}

\begin{theorem}\label{thm:primitive curve fibration}
  The map $H^1(S, \Omega^1_S) \to H^1(S,\omega_S)$ induced by the de Rham differential is surjective. 
\end{theorem}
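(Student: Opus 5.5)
We prove surjectivity with the Hodge--de Rham spectral sequence for the algebraic de Rham complex $\Omega^\bullet_S$, filtered by the stupid filtration. The $E_1$-page is
\begin{equation}
  E_1^{p,q} = H^q(S,\Omega^p_S) \Longrightarrow H^{p+q}_{\mathrm{dR}}(S).
\end{equation}
The differential $d_1\colon E_1^{1,1}\to E_1^{2,1}$ is exactly the map $H^1(S,\Omega^1_S)\to H^1(S,\omega_S)$ induced by $\ddR$. So the claim is that $E_2^{2,1}=0$.

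First we determine which terms can be nonzero. By Lemma \ref{lemma:H2 vanishes}, $E_1^{p,q}=0$ for $q\geq 2$. Since $S$ is a surface, $E_1^{p,q}=0$ for $p\geq 3$. Hence only the rows $q=0,1$ and the columns $p=0,1,2$ survive.

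Next we check that $E_2^{2,1}$ is already the abutment term. Every higher differential $d_r$ with $r\geq 2$ leaving $E_r^{2,1}$ lands in $E_r^{2+r,2-r}$, and $p=2+r\geq 4$ forces this to vanish. Every $d_r$ arriving at $E_r^{2,1}$ starts at $E_r^{2-r,r}$, and $q=r\geq 2$ forces this to vanish. Therefore
\begin{equation}
  E_\infty^{2,1}=E_2^{2,1}=\coker\bigl(H^1(S,\Omega^1_S)\xrightarrow{\ddR}H^1(S,\omega_S)\bigr).
\end{equation}

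Then we identify $E_\infty^{2,1}$ with a graded piece of de Rham cohomology. In total degree $3$, the only nonzero $E_\infty$-terms are among $E_\infty^{p,3-p}$ with $q\leq 1$ and $p\leq 2$, which leaves only $E_\infty^{2,1}$. Hence $E_\infty^{2,1}\cong \gr^2_F H^3_{\mathrm{dR}}(S)$, a subquotient of $H^3_{\mathrm{dR}}(S)$. By Lemma \ref{lemma:H3 vanishes}, $H^3_{\mathrm{dR}}(S)=0$, so the cokernel vanishes and the map is surjective.

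The main point needing care is the identification of $H^3_{\mathrm{dR}}(S)$, computed as hypercohomology of $\Omega^\bullet_S$ on the Zariski site, with the Betti group $H^3(S,\C)$ used in Lemma \ref{lemma:H3 vanishes}. For smooth $S$ that need not be proper, this is Grothendieck's algebraic de Rham theorem. The spectral sequence itself is just the spectral sequence of the filtered complex, and it converges because the filtration is finite. Degeneration at $E_1$ is not needed; that would fail for non-proper $S$ anyway. Only the support pattern of the $E_1$-page is used.
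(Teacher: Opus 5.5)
Your proposal is correct and is essentially the paper's proof. Both identify $E_2^{2,1}$ of the Hodge--de Rham spectral sequence with the cokernel of $\ddR$, use Lemma~\ref{lemma:H2 vanishes} and $\dim S=2$ to show that no differential enters or leaves this term and that it is the only nonzero graded piece of $H^3_{\mathrm{dR}}(S)$, and then conclude by Lemma~\ref{lemma:H3 vanishes}. Two small remarks: the rows $q\geq 3$ are killed by Grothendieck vanishing (as the paper does for $E_\infty^{0,3}$), not literally by Lemma~\ref{lemma:H2 vanishes}; and your appeal to Grothendieck's algebraic de Rham theorem makes explicit the comparison the paper uses tacitly in Lemma~\ref{lemma:H3 vanishes}.
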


\begin{proof}
  We use the Hodge--de Rham spectral sequence. Note that 
  \begin{equation}
  \label{eq:558}
    E_2^{2,1} = \coker \left(d_{\mathrm{dR}}\colon H^1(S,\Omega^1_S) \to H^1(S,\omega_S)\right).
  \end{equation}
  Thus, we need to show that $E_{2}^{2,1} =0$. We do this in three steps: First, we show that the $(2,1)$-piece stabilises at the $E_2$-page. Second, we show that $E_\infty^{2,1} = H^3_{\mathrm{dR}}(S)$. Lastly, we invoke Lemma \ref{lemma:H3 vanishes} to show that $H^3_{\mathrm{dR}}(S) = 0$.
  
  There are no outgoing differentials from $E_2^{2,1}$, because they would land in $E_r^{2+r,2-r}$ and for $r\geq 2$ those vanish. For $r = 2$ the only incoming differential is $E_2^{0,2} \to E_2^{2,1}$. By Lemma \ref{lemma:H2 vanishes}, $E_2^{0,2} = H^2(S,\O) = 0$. For $r\geq 3$, $E_r^{2-r,r}$ is outside the first quadrant and therefore equal to $0$. Hence, there are no non-zero incoming differentials and $E_2^{2,1} = E_{\infty}^{2,1}$.
  
  As $H^n_{\mathrm{dR}}(S)$ has a filtration 
  \begin{equation}
  \label{eq:560}
    H^n_{\mathrm{dR}}(S) = F^0 H^n_{\mathrm{dR}}(S) \supseteq F^1H^n_{\mathrm{dR}}(S) \supseteq F^2H^n_{\mathrm{dR}}(S) \supseteq \ldots 
  \end{equation}
  such that
  \begin{equation}
  \label{eq:559}
    \gr^p H^n_{\mathrm{dR}}(S) \coloneq  F^p H^n_{\mathrm{dR}}(S) / F^{p+1}H^n_{\mathrm{dR}}(S) \simeq E_\infty^{p,n-p},
  \end{equation}
  to show $H^3_{\mathrm{dR}}(S) = E_{\infty}^{2,1}$, we have to show that all graded pieces vanish except for $\gr^2 H^3_{\mathrm{dR}}(S) = E_{\infty}^{2,1}$. Because $S$ is a surface, $E_{\infty}^{3,0} = 0$. By Lemma \ref{lemma:H2 vanishes}, $E_{\infty}^{1,2} = 0$. Moreover, by Grothendieck vanishing $E_{\infty}^{0,3}=0$. Hence, 
  \begin{equation}
  \label{eq:562}
    F^3H^3_{\mathrm{dR}}(S) = 0 \qquad \text{and} \qquad F^0 H^3_{\mathrm{dR}}(S)= F^1H^3_{\mathrm{dR}}(S) = F^2H^3_{\mathrm{dR}}(S) = H^3_{\mathrm{dR}}(S).
  \end{equation}
  Thus,
  \begin{equation}
  \label{eq:561}
    E_{\infty}^{2,1} = \gr^2 H^3_{\mathrm{dR}}(S) = H^3_{\mathrm{dR}}(S).
  \end{equation}
  
  By Lemma \ref{lemma:H3 vanishes}, $H^3_{\mathrm{dR}}(S) = 0$. Therefore, \eqref{eq:558} vanishes.
\end{proof}

\begin{corollary}[= Theorem \ref{thm:C}]\label{corollary:global critical chart curve fibration}
  Let $S$ be a smooth complex surface and $\pi\colon S \to \A^1$ a proper surjective morphism with connected one-dimensional fibres. Fix a class $c \in H^1(S,\omega_S)$ and denote by $q\colon X_c \to S$ the total space of the $\omega_S$-torsor associated to it. There exists a function 
  \begin{equation}
  \label{eq:601}
    f_c\colon \mathfrak{M}_S \to \A^1
  \end{equation}
  on the derived moduli stack $\mathfrak{M}_S$ of compactly supported perfect complexes on $S$ such that there exists a natural equivalence of $(-1)$-shifted symplectic derived Artin stacks 
  \begin{equation}
  \label{eq:180}
    \mathfrak{M}_{X_c} \simeq \Crit(f_c).
  \end{equation}
\end{corollary}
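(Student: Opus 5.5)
The plan is to deduce the statement directly from Corollary \ref{cor:critical locus} with $Y = S$ and $d = \dim S = 2$. The only hypothesis to check there is that the given class $c \in H^1(S,\omega_S)$ is exact, i.e.\ that $c = \ddR c'$ for some $c' \in H^1(S,\Omega^1_S)$. This is precisely the content of Theorem \ref{thm:primitive curve fibration}: the map $H^1(S,\Omega^1_S) \to H^1(S,\omega_S)$ induced by the de Rham differential is surjective. So first I choose a primitive $c'$ with $\ddR c' = c$; any choice will do, since the negative cyclic lift is unique.

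Second, I view $c'$ as a Hochschild class in $\HH_2(\QCoh(S))$ and $c$ as a class in $\HH_1(\QCoh(S))$ via the Hochschild--Kostant--Rosenberg isomorphism. By the compatibility of HKR with the mixed differential, $(\delta c')^\natural = c$. The Connes sequence \eqref{eq:214} shows that $\HC^-_1(\QCoh(S)) \to \HH_1(\QCoh(S))$ is injective, so $\delta c'$ equals the lift $\tilde c$ of Theorem \ref{thm:km cy completion}. That theorem then gives $\Pi_3(\QCoh(S),\tilde c) \simeq \QCoh(X_c)$, compatibly with the left $3$-Calabi--Yau structures. This is exactly the argument in the proof of Corollary \ref{cor:critical locus}, and nothing in it uses the specific geometry of $\tot_C(L)$.

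Third, Theorem \ref{thm:bcs} applied with $d=2$ gives a function $f_c \coloneq \ch^{(0)}(c')\colon \mathfrak{M}_S \to \A^1[2-d] = \A^1$. It also gives an equivalence of $(1-d) = (-1)$-shifted symplectic derived stacks $\mathfrak{M}_{X_c} \simeq \mathfrak{M}_{\Pi_3(\QCoh(S),\tilde c)} \simeq \Crit(f_c)$.

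The main obstacle is the finiteness hypothesis in Theorem \ref{thm:bcs}, which requires $\mathcal{C}$ to be of finite type, together with identifying $\mathfrak{M}_{\QCoh(S)}$ with the stack of compactly supported perfect complexes. For this I would argue as in the warped local curve case, Theorem \ref{thm:global critical chart}, where the same inputs are used for a quasi-projective surface. The only genuinely new ingredient, the existence of a de Rham primitive, is supplied by Theorem \ref{thm:primitive curve fibration}, which rests on Lemmas \ref{lemma:H3 vanishes} and \ref{lemma:H2 vanishes}.
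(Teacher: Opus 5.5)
Your proposal is correct and is essentially the paper's proof: the paper reruns the argument of Theorem \ref{thm:global critical chart} (that is, Corollary \ref{cor:critical locus} with $d=2$), with Theorem \ref{thm:primitive curve fibration} supplying the de Rham primitive in place of Lemma \ref{lem:cohomology-primitive}. One small slip: under HKR the primitive $c'\in H^1(S,\Omega^1_S)$ has $p-q=0$ and so lies in $\HH_0(\QCoh(S))$, not $\HH_2$, and degree $d-2=0$ is exactly what Theorem \ref{thm:bcs} requires.
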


\begin{proof}
  Analogous to the proof of Theorem \ref{thm:A} (= Theorem \ref{thm:global critical chart}) invoking Theorem \ref{thm:primitive curve fibration} in place of Lemma \ref{lem:cohomology-primitive}.
\end{proof}

\section{$\chi$-Independence}\label{sec:chi-independence}

\subsection{One-Dimensional Sheaves on Local Surfaces}
Let $k$ be a field of characteristic $0$ and let $C$ be a smooth projective curve over $k$. 
Let $Y \coloneq \tot_C(L)$ be the total space of a line bundle $L$ of degree $\deg L >2g(C) -2$ on $C$.

\begin{proposition}\label{prop:existence of curve in Xc}
  If $c \in \bigoplus_{k\geq 2} H^1(C,\omega_C\otimes L^{-k})$, then the total space $X_c$ of the torsor corresponding to $c$ contains a proper integral curve. 
\end{proposition}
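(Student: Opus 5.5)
The plan is to exhibit the curve explicitly: it will be a section of the torsor $q$ over the zero section of $Y$. Let $s\colon C\hookrightarrow Y$ denote the zero section of $\pi$. The base change $X_c\times_Y C\to C$ is a torsor under $s^{*}\omega_Y\cong \omega_C\otimes L^{\vee}$, and its class is the pullback
\begin{equation}
  s^{*}c\in H^1(C,s^{*}\omega_Y)=H^1(C,\omega_C\otimes L^{-1}).
\end{equation}
If this class vanishes, the restricted torsor is trivial, so it has a section $\sigma_0\colon C\to X_c\times_Y C$. Composing with the projection gives $\sigma\colon C\to X_c$ with $q\circ\sigma=s$.

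The key step is to show that $s^{*}c$ is exactly the component $c_1$ in the decomposition \eqref{eq:H1-decomp}. Since $\pi$ is affine, $s^{*}$ on $H^1$ is computed by applying $H^1(C,-)$ to the $\O_C$-linear map
\begin{equation}
  \pi_{*}\omega_Y \to \pi_{*}s_{*}s^{*}\omega_Y = \omega_C\otimes L^{\vee},
\end{equation}
obtained by pushing forward the restriction map $\omega_Y\to s_*s^*\omega_Y$. Under $\pi_*\omega_Y\cong(\omega_C\otimes L^\vee)\otimes\Sym L^\vee$, this is the tensor product of $\omega_C\otimes L^\vee$ with the augmentation $\Sym L^\vee\to\O_C$. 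The augmentation kills every $\Sym^{j}L^\vee$ with $j\ge1$, and these pieces are precisely the summands $\omega_C\otimes L^{-k}$ with $k=j+1\ge 2$.

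One can confirm this in a local trivialisation with fibre coordinate $t$. There $\iota_k(\alpha)$ is represented by the Čech cocycle $\pi^{*}\alpha\cdot t^{k-1}dt$, as in the proof of Lemma \ref{lem:cohomology-primitive}, and restricting to $t=0$ kills it exactly when $k\ge2$. Hence $s^{*}c=c_1$, which is $0$ under the hypothesis $c\in\bigoplus_{k\ge2}H^1(C,\omega_C\otimes L^{-k})$.

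It remains to check that $\sigma(C)$ is a proper integral curve. The morphism $q$ is affine, hence separated, and $q\circ\sigma=s$ is a closed immersion. Therefore $\sigma$ is a closed immersion, since it factors as the graph-type morphism $C\to X_c\times_Y C$, which is a closed immersion because $q$ is separated, followed by the base change of the closed immersion $s$. So $\sigma(C)\cong C$ is a closed integral subscheme of $X_c$, and it is proper because $C$ is projective.

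I expect the main obstacle to be the careful identification of the pullback $s^{*}$ on $H^1$ with projection onto the $k=1$ summand, compatibly with how the torsor class transforms under base change. The point is that pulling back an $\omega_Y$-torsor along $s$ corresponds to pushing its class forward along $\omega_Y\to s_*s^*\omega_Y$. I would verify this directly on Čech cocycles, using an affine cover of $C$ and its preimage in $Y$.
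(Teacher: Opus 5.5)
Your proposal is correct and follows essentially the same route as the paper: restrict the torsor to the zero section $s$, note that $s^{*}c$ is the $k=1$ component of $c$ and so vanishes, and take the resulting section, which is a closed immersion because $q$ is separated. Your justification of $s^{*}c=c_1$, via the augmentation $\Sym L^{\vee}\to\O_C$ applied to $\pi_{*}\omega_Y$, supplies a step that the paper only asserts.
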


\begin{proof}
  Pulling back the $\omega_Y$-torsor $X_c$ along the zero section $s\colon C \hookrightarrow Y$, we get a $s^{*}\omega_Y$-torsor $s^{*}X_c$. Its class is the pullback $s^{*}c \in H^1(C,s^{*}\omega_Y) \cong H^1(C, \omega_C\otimes L^{\vee})$. Under the identification
  \begin{equation}
  \label{eq:578}
    H^1(Y,\omega_Y) \cong \bigoplus_{k\geq 1}  H^1(C,\omega_C\otimes L^{-k}),
  \end{equation}
  the map
  \begin{equation}
  \label{eq:507}
    s^{*}\colon    H^1(Y,\omega_Y) \to H^1(C,\omega_C\otimes L^{\vee})
  \end{equation}
  is the projection onto the first component. 
  If $c$ has no component in $H^1(C,\omega_C\otimes L^{\vee})$, $s^{*}c = 0$. Therefore, the $s^{*}\omega_Y$-torsor $s^{*}X_c$  associated to $s^{*}c$ is trivial. Hence, it has a global section $\sigma\colon C \to s^{*}X_c$. This gives a lift of $s$ to $X_c$:
  \begin{equation}
  \label{eq:508}
    \tilde{s} \colon C \xrightarrow{\sigma} s^{*}X_c \to  X_c
  \end{equation}
  Since $s^{*} X_c \to X_c$ is the base-change of a closed immersion, it is itself a closed immersion. Since $\sigma$ is a section of a separated morphism, it is a closed immersion. Therefore, $\tilde{s}$ is a closed immersion.  Thus, its image is a proper curve in $X_c$. 
\end{proof}

\begin{corollary}\label{corollary:existence curve class}
  Let $r \in \Z_{>0}$.  If $c \in \bigoplus_{k\geq2} H^1(C,\omega_C\otimes L^{-k})$, $X_c$ contains an effective curve class $[\Gamma]$ such that $(\pi\circ q)_{\#} [\Gamma] = r[C]$.  
\end{corollary}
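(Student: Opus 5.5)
The plan is to start from the proper integral curve $\Gamma_0 \coloneq \tilde{s}(C) \subset X_c$ constructed in Proposition \ref{prop:existence of curve in Xc} and then take $r$ times its cycle. Concretely, I set $[\Gamma] \coloneq r[\Gamma_0]$, which is an effective one-cycle on $X_c$ supported on a compact curve. It remains to compute $(\pi\circ q)_{\#}[\Gamma]$.

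By construction, $q \circ \tilde{s}$ is the zero section $s\colon C \hookrightarrow Y$, since $\tilde{s}$ was defined as a lift of $s$ through $s^{*}X_c \to X_c$. Hence $\pi \circ q \circ \tilde{s} = \pi \circ s = \id_C$. In particular, the composite $\Gamma_0 \to q(\Gamma_0) = s(C) \to C$ is an isomorphism, since $\tilde{s}$ is a closed immersion with inverse given by $\pi\circ q$ restricted to $\Gamma_0$. Therefore the degree $\deg(\Gamma_0 \to q(\Gamma_0))$ appearing in the pushforward formula \eqref{eq:167} equals $1$, and likewise for the projection to $C$. This gives $q_{\#}[\Gamma_0] = [s(C)]$ and $\pi_{\#}[s(C)] = [C]$. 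By linearity of proper pushforward of cycles, $(\pi\circ q)_{\#}(r[\Gamma_0]) = r[C]$.

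The only mildly delicate point is compatibility of pushforward with composition, i.e. $(\pi\circ q)_{\#} = \pi_{\#}\circ q_{\#}$. This holds because degrees of finite dominant maps between integral curves are multiplicative. Here all the maps involved are isomorphisms of the curves, so there is no real obstacle. If one prefers a representative that is a single reduced curve rather than a multiple, one could instead take $r$ distinct lifts $\tilde{s} + \sigma_i$ for sections $\sigma_i \in H^0(C, s^{*}\omega_Y)$, but the statement only asks for a class, so the multiple $r[\Gamma_0]$ suffices.
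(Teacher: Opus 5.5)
Your argument is correct and is essentially the paper's proof: take $\Gamma_0=\tilde{s}(C)$, use $\pi\circ q\circ\tilde{s}=\pi\circ s=\id_C$ to get $(\pi\circ q)_{\#}[\Gamma_0]=[C]$, and set $[\Gamma]=r[\Gamma_0]$. One caveat, which concerns only your closing aside and plays no role in the proof: under the standing assumption $\deg L>2g(C)-2$ one has $H^0(C,s^{*}\omega_Y)\cong H^0(C,\omega_C\otimes L^{\vee})=0$, so the lift $\tilde{s}$ is unique (compare Proposition \ref{prop:unique lift}), and there are no distinct lifts $\tilde{s}+\sigma_i$ to choose from.
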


\begin{proof}
  By the proof of Proposition \ref{prop:existence of curve in Xc}, there is a section $\tilde{s} \colon C \to X_c$ whose image $\Gamma_0 = \tilde{s}(C)$ is a curve in $X_c$. We claim that 
  \begin{equation}
  \label{eq:32}
    (\pi\circ q)_{\#} [\Gamma_0] = [C]. 
  \end{equation}
  Indeed, because $\tilde{s} \colon C \to X_c$ is a lift of $s\colon C\to Y$, $q \circ \tilde{s} = s$.
  Therefore, 
  \begin{equation}
  \label{eq:79}
    \pi \circ s = \pi \circ q \circ \tilde{s}.
  \end{equation}
  Then, 
  \begin{equation}
  \label{eq:89}
    (\pi\circ q)|_{\Gamma_0} = \pi \circ s \circ \tilde{s}^{-1}|_{\Gamma_0} = \id_C.
  \end{equation}
  As $\tilde{s}(C) = \Gamma_0$, this implies \eqref{eq:32}. Finally, take $[\Gamma] \coloneq r[\Gamma_0]$. 
\end{proof}

\begin{proposition}\label{prop:unique lift}
  Let $i\colon D \hookrightarrow Y$ be an integral proper curve.
   There is at most one integral proper curve $\Gamma\subset X_c$ such that $q(\Gamma) = D$. That is, a curve in $X_c$ is uniquely determined by its image in $Y$. 
\end{proposition}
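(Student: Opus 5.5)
We will prove the statement by restricting the torsor to $D$ and then to the normalisation of $D$. Let $\Gamma\subset X_c$ be an integral proper curve with $q(\Gamma)=D$. Since $q$ is affine, the fibres of $q|_\Gamma$ are affine and proper, hence finite, so $q|_\Gamma\colon \Gamma\to D$ is finite and surjective. The restriction $X_D\coloneq q^{-1}(D)=D\times_Y X_c$ is an $i^*\omega_Y$-torsor over $D$, and $\Gamma\subset X_D$ is a closed integral subscheme finite over $D$. We must show that $X_D$ contains at most one such curve.

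First we reduce to a smooth base. Let $\nu\colon \widetilde D\to D$ be the normalisation, a smooth projective curve. Set $M\coloneq \nu^*i^*\omega_Y$, a line bundle on $\widetilde D$, and $P\coloneq \widetilde D\times_D X_D$, an $M$-torsor over $\widetilde D$. Let $\widetilde\Gamma$ be the normalisation of $\Gamma$. The map $\widetilde\Gamma\to D$ is dominant and $\widetilde\Gamma$ is normal, so it lifts to $\widetilde\Gamma\to\widetilde D$. Together with $\widetilde\Gamma\to\Gamma\subset X_c$ this gives a map $\widetilde\Gamma\to P$. Its image $\Gamma'$ is an integral proper curve in $P$ that maps onto $\Gamma$. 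It therefore suffices to show that $P$ has at most one integral proper curve; the image of that curve in $X_c$ then recovers $\Gamma$.

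The main step is the following claim: an $M$-torsor $P$ over a smooth projective curve $\widetilde D$ contains at most one integral proper curve $\Gamma'$, and any such curve is the image of a section. Suppose $\Gamma'$ is one, with $\Gamma'\to\widetilde D$ finite of degree $n$. Working in characteristic $0$, we use the averaging trick. On each trivialising open set $P|_U\cong \tot(M|_U)$, the curve $\Gamma'$ is cut out by a monic polynomial in the fibre coordinate of degree $n$. The sum of its $n$ roots divided by $n$ is well defined, and because translation by $M$ commutes with averaging, these local averages glue to a global section $\sigma\colon\widetilde D\to P$. Translating by $\sigma$ identifies $P$ with $\tot(M)$. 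The differences of the roots from their average give a multivalued section of $M$, and the symmetric functions of these differences are global sections of $M^{\otimes j}$ for $j\geq 2$.

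We then show that these sections are constant or vanish. The degree of $M$ is
\begin{equation*}
\deg M=\deg(\omega_Y|_D)=(\deg \omega_C\otimes L^\vee)\cdot \deg(\pi|_D),
\end{equation*}
since $\omega_Y\simeq\pi^*(\omega_C\otimes L^\vee)$. This is $\leq 0$ in general, and negative when $\deg L>2g(C)-2$ and $D$ is not contained in a fibre of $\pi$. If $D$ lies in a fibre of $\pi$, then $D$ is a point of $C$ times a proper curve in an affine line, which is impossible; so the case of a fibre does not occur. Hence $\deg M^{\otimes j}<0$ for $j\geq1$, so all these sections vanish and $\Gamma'$ is set-theoretically $\sigma(\widetilde D)$. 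Integrality then forces $\Gamma'=\sigma(\widetilde D)$. Uniqueness follows: a second section $\sigma'$ would satisfy $\sigma'-\sigma\in H^0(\widetilde D,M)=0$.

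The hardest step is the degree bound $\deg M<0$ without the hypothesis $\deg L>2g(C)-2$, since the proposition does not state that hypothesis. If $\deg M=0$ but $M$ is nontrivial, the argument above still works because $H^0(M^{\otimes j})=0$ whenever $M^{\otimes j}\not\simeq\O$. When $M$ is torsion or trivial, the argument fails: $\tot(\O)$ contains infinitely many constant sections. Here we would try to use that $\omega_C\otimes L^\vee$ has degree $<0$, which is the standing assumption of this section. This makes $\deg M<0$ whenever $\deg(\pi|_D)\geq1$, and $\deg(\pi|_D)\geq1$ always holds by the fibre argument above.
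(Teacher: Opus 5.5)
Your argument is correct, but it takes a different route from the paper's.

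\textbf{The paper's route.} The paper works with torsor classes. The map $\widetilde\Gamma\to T_D$ gives a section of $f^{*}T_D$, so the class of $T_D$ dies under $f^{*}$. Since $\tr\circ f^{*}=\deg(f)\cdot\id$ in characteristic $0$, $f^{*}$ is injective on $H^1$, so $T_D$ is trivial. Because $H^0(\widetilde D,M)=0$, it then has a unique section $s_D$. Finally, uniqueness of the section of $f^{*}T_D$ (from $H^0(\widetilde\Gamma,f^{*}M)=0$) forces $\widetilde\Gamma\to X_c$ to factor through $s_D$.

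\textbf{Your route.} You construct the section directly, as the barycentre $\frac1n\tr_{\O_{\Gamma'}/\O_{\widetilde D}}(t)$ of the fibres of $\Gamma'$. You then identify $\Gamma'$ through the spectral description of curves in $\tot(M)$ finite over $\widetilde D$. After centring, the characteristic coefficients are sections of $M^{\otimes j}$ and vanish. Hence $\Gamma'\subset\{T^n=0\}$, and integrality gives $\Gamma'=\sigma(\widetilde D)$.

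\textbf{Comparison.} Both proofs rest on the same two inputs: dividing by a degree in characteristic $0$, and negativity of $\deg M$. Yours is constructive and avoids cohomology of the pulled-back torsor on $\widetilde\Gamma$. In exchange it uses the local polynomial description, which is harmless since Cayley--Hamilton already gives the containment you need. It also uses vanishing of $H^0(\widetilde D,M^{\otimes j})$ for $1\le j\le n$, instead of just $H^0$ of $M$ on $\widetilde D$ and of $f^{*}M$ on $\widetilde\Gamma$.

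\textbf{Smaller points.} You justify $\deg(\widetilde D\to C)>0$ (the fibres $\A^1$ contain no proper curves), which the paper asserts without comment. Your closing worry is unnecessary: $\deg L>2g(C)-2$ is the standing assumption of that section, and it is exactly what the paper's proof uses. Your aside that $\deg M\le 0$ holds ``in general'' is false without that hypothesis and should be dropped.
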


\begin{proof}
  Assume that there exists an integral projective curve $\Gamma \subset X_c$ such that $q(\Gamma) = D$. Let $\nu\colon \tilde{D} \to D$ be the normalisation of $D$.
  We show that any such curve $\Gamma$ is the image of $\tilde{D}$ under a section $s_D$ of a torsor $T_D$ associated to $D \subset Y$.
  
 Since $\omega_Y\simeq \pi^{*}(\omega_C\otimes L^{\vee})$ and $\tilde{D} \xrightarrow{\nu} D \xrightarrow{i} Y \xrightarrow{\pi} C$ is finite of positive degree, 
  \begin{equation}
  \label{eq:27}
    \deg (\nu^{*}i^{*}\omega_Y) = \deg (\tilde{D} \to C) \cdot (2g(C) - 2 - \deg L) <0.
  \end{equation}
  Hence, 
  \begin{equation}
  \label{eq:31}
    H^0(\tilde{D}, \nu^{*}i^{*}\omega_Y) = 0.
  \end{equation}
  Let $T_D \coloneq \tilde{D} \times_Y X_c$ be the $\nu^{*}i^{*}\omega_Y$-torsor given by the fibre product 
  \begin{equation}
  \label{eq:24}
  \begin{tikzcd}
    T_D \arrow[d] \arrow[r] & X_c \arrow[d,"q"] \\
    \tilde{D} \arrow[r,"i\circ \nu" swap] &  Y.
  \end{tikzcd}
\end{equation}
  By \eqref{eq:31}, $T_D \to \tilde{D}$ has at most one section.

  We now show that $T_D$ in fact has a section. Let $\mu\colon \tilde{\Gamma} \to \Gamma$ be the normalisation of $\Gamma$. The map $\tilde{\Gamma} \to D$ factors uniquely through $\nu$, 
  \begin{equation}
  \label{eq:122}
  \begin{tikzcd}
    \tilde{\Gamma} \arrow[d,"\mu"] \arrow[r,"f"] & \tilde{D} \arrow[d,"\nu"] \\
    \Gamma \arrow[r,"q|_{\Gamma}"] & D.
  \end{tikzcd}
  \end{equation}
  By the universal property of the fibre product, 
  \begin{equation}
  \label{eq:34}
  \begin{tikzcd}
    \tilde{\Gamma} \arrow[dr,dashed] \arrow[drr, bend left = 30] \arrow[ddr, bend right=30] &&\\
    & T_D \arrow[r] \arrow[d] & X_c  \arrow[d,"q"] \\
    & \tilde{D} \arrow[r,"i\circ \nu"] &Y
  \end{tikzcd}
  \end{equation}
  there is a map $\tilde{\Gamma} \to T_D$. 
  Again by the universal property of the fibre product, this induces a section $t_{\Gamma}$ of $f^{*}T_D$:
  \begin{equation}
  \label{eq:29}
  \begin{tikzcd}
    \tilde{\Gamma} \arrow[dr,dashed,"t_{\Gamma}"] \arrow[drr, bend left = 30] \arrow[ddr, bend right=30, "\id_{\tilde{\Gamma}}" swap] && \\
    & f^{*}T_D \arrow[r] \arrow[d] & T_D \arrow[d] \\
    & \tilde{\Gamma} \arrow[r,"f"] & \tilde{D}.
  \end{tikzcd}
  \end{equation}
  Therefore, $f^{*}T_D$ is the trivial $f^{*}\nu^{*}i^{*}\omega_Y$-torsor. Note that by 
  \begin{equation}
  \label{eq:129}
    \deg (f^{*}\nu^{*}i^{*}\omega_Y) = \deg (f) \cdot \deg (\nu^{*}i^{*}\omega_Y) <0, 
  \end{equation}
  $H^0(\tilde{\Gamma},  f^{*}\nu^{*}i^{*}\omega_Y) = 0$, so the section $t_{\Gamma}$ is unique. 
  As $\operatorname{char} k = 0$ and
  \begin{equation}
  \label{eq:30}
    \tr \circ f^{*} = \deg (f) \cdot \id,
  \end{equation}
  the pullback $f^{*}$ is injective on $H^1$, so $T_D$ is the trivial $\nu^{*}i^{*}\omega_Y$-torsor. By \eqref{eq:31}, it has a unique global section $s_D$. Since $f^{*}T_D$ has a unique global section, $t = f^{*}s_D$.
  Let $j_D\colon \tilde{D} \xrightarrow{s_D} T_D \to X_c$. The normalisation map $\tilde{\Gamma} \to \Gamma \to X_c$ is exactly the map 
  \begin{equation}
  \label{eq:123}
    \tilde{\Gamma} \xrightarrow{f} \tilde{D} \xrightarrow{j_D} X_c.
  \end{equation}
  Since $f$ is surjective, $j_D$ and $j_D \circ f$ have the same image in $X_c$. The image of $\tilde{\Gamma} \to X_c$ is $\Gamma$, so 
  \begin{equation}
  \label{eq:128}
    j_D(\tilde{D}) = \Gamma\subset X_c.
  \end{equation}
  Hence, any curve in $X_c$ with image $D\subset Y$ is equal to $j_D(\tilde{D})$. 
\end{proof}

\begin{lemma}\label{lemma:q* closed immersion on stacks}
  The inclusion of $\mathfrak{M}_{X_c}$ into $\mathfrak{M}_Y$ as the derived critical locus of the function in Theorem \ref{thm:global critical chart} is given by the pushforward along $q\colon X_c \to Y$. That is, the following diagram commutes
  \begin{equation}
  \label{eq:54}
  \begin{tikzcd}
    \mathfrak{M}_{X_c} \arrow[r,"\sim"] \arrow[drr,"q_{*}" swap, bend right = 10] & \mathfrak{M}_{\Pi_3(\mathsf{QCoh} Y, \tilde{c})} \arrow[r,"\sim"]& \Crit(f_c) \arrow[d,hook] \\ & & \mathfrak{M}_Y.
  \end{tikzcd}
  \end{equation}
  In particular, the inclusion $q_{*}$ is a closed immersion onto the critical locus of $f_c$. 
\end{lemma}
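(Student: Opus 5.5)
The plan is to trace the two equivalences in the diagram and check that their composite with the projection $\Crit(f_c)\to\mathfrak{M}_Y$ is the moduli map induced by the restriction-of-scalars functor $q_*\colon \QCoh(X_c)\to\QCoh(Y)$. The moduli-of-objects construction $\mathcal{C}\mapsto \mathfrak{M}_{\mathcal{C}}$ is functorial for functors preserving the relevant finiteness (pseudo-perfect or compactly supported perfect objects). So the claim reduces to a statement about categories: under the equivalence of Theorem \ref{thm:km cy completion}, the structure functor of the deformed Calabi--Yau completion corresponds to $q_*$.

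First, recall how the map $\mathfrak{M}_{\Pi_3(\QCoh Y,\tilde c)}\to \mathfrak{M}_Y$ underlying Theorem \ref{thm:bcs} arises in \cite{BozecCalaqueScherotzke2024}. The completion comes with a canonical dg functor $\mathcal{C}\to \Pi_{d+1}(\mathcal{C},\tilde c)$, with $\mathcal{C}=\QCoh(Y)$. Restriction along it sends a $\Pi$-module to its underlying $\mathcal{C}$-module. The induced map on moduli of (pseudo-perfect) objects is exactly the map $\Crit(f_c)\to \mathfrak{M}_{\mathcal{C}}$. I would cite their construction directly: the critical locus is realised as the moduli of modules over the completion, with the projection being restriction. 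Here one must check the direction of variance, since moduli of pseudo-perfect objects is contravariant in the category of compact objects, and pullback of modules along $\mathcal{C}\to\Pi$ is the forgetful functor.

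Second, use that Theorem \ref{thm:km cy completion} is an equivalence \emph{over} $\QCoh(Y)$. Kinjo--Masuda build $X_c=\Spec_Y A_c$, with $A_c=q_*\O_{X_c}$ obtained from the filtered algebra deforming $\Sym(\omega_Y^\vee)$. Their equivalence identifies $\Pi_{d+1}$-modules with $A_c$-modules in $\QCoh(Y)$, compatibly with the forgetful functor to $\O_Y$-modules. Since $q$ is affine, $A_c$-modules in $\QCoh(Y)$ are $\QCoh(X_c)$, and the forgetful functor is $q_*$. Moreover, $q_*$ preserves compactly supported perfect complexes, because $X_c$ and $Y$ are smooth and $q$ is affine, so coherence and compact support are preserved. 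Hence the induced map on moduli is the map $\mathfrak{M}_{X_c}\to\mathfrak{M}_Y$, $F\mapsto q_*F$. Combining the two steps gives commutativity of \eqref{eq:54}.

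Finally, the "in particular" follows because $\Crit(f_c)\to\mathfrak{M}_Y$ is a closed immersion on classical truncations: it is the zero locus of the section $\ddR f_c$ of $\mathbb{L}_{\mathfrak{M}_Y}$. The main obstacle is the first step: the bookkeeping that the projection in \cite[Corollary 6.19]{BozecCalaqueScherotzke2024} literally equals restriction along $\mathcal{C}\to\Pi_{d+1}$, rather than agreeing with it only up to some twist or equivalence. I would handle this by unwinding their proof via the relative Calabi--Yau / Lagrangian description, where the map to $\mathfrak{M}_{\mathcal{C}}$ is by construction the forgetful one.
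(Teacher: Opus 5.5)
Your argument for the commutativity of the diagram follows the paper's proof. Kinjo--Masuda identify the completion with modules over $\mathcal{A}_{0,c}\simeq q_*\O_{X_c}$, so the forgetful functor to $\QCoh(Y)$ becomes $q_*$. The Bozec--Calaque--Scherotzke equivalence then lies over $\mathfrak{M}_Y$; the paper simply cites their Proposition 6.11 and Example 6.15 for the bookkeeping you flag as the main obstacle.

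The gap is in your final step. Describing $\Crit(f_c)\to\mathfrak{M}_Y$ as the zero locus of $\ddR f_c$, and hence a closed immersion on classical truncations, is valid only over a \emph{smooth} base. Here $\mathfrak{M}_Y$ is merely quasi-smooth. The projection from the derived critical locus has relative tangent complex $\mathbb{L}_{\mathfrak{M}_Y}[-1]$, whose $H^0$ at a point $G$ is $H^{-1}(\mathbb{L}_{\mathfrak{M}_Y,G})\cong\mathrm{Ext}^2_Y(G,G)^\vee\cong\Hom_Y(G,G\otimes\omega_Y)$. For $c=0$ one has $f_c=0$ and $\Crit(f_c)=T^*[-1]\mathfrak{M}_Y$, whose projection forgets a Higgs field $G\to G\otimes\omega_Y$. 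Concretely, for $y\in Y$ every skyscraper $\O_x$ with $x\in q^{-1}(y)\cong\A^1$ satisfies $q_*\O_x\cong\O_y$. So $q_*\colon\mathfrak{M}_{X_c}\to\mathfrak{M}_Y$ is not even a monomorphism, and the ``in particular'' fails on all of $\mathfrak{M}_Y$. The paper's proof also establishes only the commutativity; the hooked arrow is asserted, not proved.

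What you can prove, and what is actually used in Lemma \ref{lemma:q* closed immersion}, is the claim over the open substack where $\mathrm{Ext}^2_Y(G,G)=0$. There $\mathfrak{M}_Y$ is smooth, and smooth-locally $t_0\Crit(f_c)$ is the classical critical locus, a closed subscheme. When $\deg L>2g(C)-2$ this open substack contains $\mathfrak{M}_Y^{\mathrm{ss}}(r,m)$: $\Hom(G,G\otimes\omega_Y)=0$ for semistable $G$ by the same slope argument as in the proof of Proposition \ref{prop:semistable}. You should restrict your last claim to that locus.
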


\begin{proof}
  Let $A_c\coloneq q_{*}\O_{X_c}$.   The pushforward functor $q_{*}\colon \QCoh (X_c) \to \QCoh(Y)$ forgets the $A_c$-module structure. A sheaf on $X_c$ corresponds to an $A_c$-module on $Y$ and $q_{*}$ sends it to the underlying $\O_Y$-module. By Theorem \ref{thm:km cy completion}, there is an equivalence of $k$-linear stable $\infty$-categories
  \begin{equation}
  \label{eq:33}
    \QCoh(X_c) \simeq \Pi_3(\QCoh(Y),\tilde{c}),
  \end{equation}
  where $\tilde{c}\in \HC^-_1(\QCoh(Y))$ denotes the negative cyclic lift of $c\in \HH_1(\QCoh(Y))$.
  Moreover, in the proof of Theorem \ref{thm:km cy completion}, the equivalence is obtained by identifying the deformed Calabi--Yau completion with 
  \begin{equation}
  \label{eq:57}
    \mathrm{RMod}_{\mathcal{A}_{0,c}}(\QCoh(Y))
  \end{equation}
  and identifying $\mathcal{A}_{0,c}$ with $q_{*}\O_{X_c}$. Hence, this equivalence identifies the forgetful functor 
  \begin{equation}
  \label{eq:58}
    \Pi_3(\QCoh(Y),\tilde{c}) \to \QCoh(Y)
  \end{equation}
  with the pushforward functor $q_{*}$.
  Furthermore, by \cite[Proposition 6.11]{BozecCalaqueScherotzke2024} and \cite[Example 6.15]{BozecCalaqueScherotzke2024}, the induced equivalence on moduli stacks
  \begin{equation}
  \label{eq:47}
    \mathfrak{M}_{\Pi_3(\QCoh(Y),\tilde{c})} \simeq \Crit(f_c)
  \end{equation}
  is over $\mathfrak{M}_Y$, too. 
\end{proof}

\begin{lemma}\label{lemma:q* closed immersion}
  The morphism $\iota \colon M_{X_c}^{\mathrm{ss}}(r,m) \to M_Y^{\mathrm{ss}}(r,m)$ is a closed immersion and 
  \begin{equation}
  \label{eq:201}
  \begin{tikzcd}
    \mathfrak{M}_{X_c}^{\mathrm{ss}}(r,m) \arrow[d]\arrow[r,"q_{*}"] & \mathfrak{M}_Y^{\mathrm{ss}}(r,m) \arrow[d]\\
    M_{X_c}^{\mathrm{ss}}(r,m) \arrow[r,"\iota"] & M_Y^{\mathrm{ss}}(r,m)
  \end{tikzcd}  
  \end{equation}
  commutes.
\end{lemma}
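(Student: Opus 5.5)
The plan has three steps: obtain $q_*$ on stacks, descend it to the good moduli spaces, and then show that the descended map $\iota$ is a closed immersion. For the first step, Proposition \ref{prop:semistable} shows that $q_*$ preserves semistability and the invariants $(r,m)$. Lemma \ref{lemma:q* closed immersion on stacks} identifies $q_*\colon \mathfrak{M}_{X_c}\to\mathfrak{M}_Y$ with the inclusion $\Crit(f_c)\hookrightarrow \mathfrak{M}_Y$. Base changing along the open substack $\mathfrak{M}_Y^{\mathrm{ss}}(r,m)\subset \mathfrak{M}_Y$, exactly as in the last step of the proof of Proposition \ref{prop:semistable}, gives a morphism
\begin{equation}
  q_*\colon \mathfrak{M}_{X_c}^{\mathrm{ss}}(r,m)\to \mathfrak{M}_Y^{\mathrm{ss}}(r,m),
\end{equation}
which on classical truncations is a closed immersion onto $t_0\Crit(f_c|_{\mathfrak{M}_Y^{\mathrm{ss}}(r,m)})$.

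For the second step, consider the composite $\mathfrak{M}_{X_c}^{\mathrm{ss}}(r,m)\to \mathfrak{M}_Y^{\mathrm{ss}}(r,m)\to M_Y^{\mathrm{ss}}(r,m)$. This is a morphism from the stack to a scheme, so by Remark \ref{rem:factor-through-good-moduli} (the universal property of good moduli spaces, \cite[Theorem 4.16]{Alper2013}) it factors uniquely through $M_{X_c}^{\mathrm{ss}}(r,m)$. The resulting map is $\iota$, and the square \eqref{eq:201} commutes by construction.

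The third step, showing that $\iota$ is a closed immersion, is the real content. I would use Alper's results on closed substacks. Let $\mathcal{Z}\subset \mathfrak{M}_Y^{\mathrm{ss}}(r,m)$ be a closed substack and $p\colon \mathfrak{M}_Y^{\mathrm{ss}}(r,m)\to M_Y^{\mathrm{ss}}(r,m)$ a good moduli space. Then $p|_{\mathcal{Z}}\colon \mathcal{Z}\to p(\mathcal{Z})$ is again a good moduli space, where $p(\mathcal{Z})$ is the closed subscheme cut out by the ideal $(p_*\mathcal{I}_{\mathcal{Z}})\subset \O_{M}$ (\cite[Lemma 4.14]{Alper2013}). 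The inclusion $p(\mathcal{Z})\hookrightarrow M_Y^{\mathrm{ss}}(r,m)$ is then a closed immersion. The key point is that $p_*$ is exact for good moduli spaces, so
\begin{equation}
  p_*\O_{\mathcal{Z}} = \O_M/p_*\mathcal{I}_{\mathcal{Z}}.
\end{equation}
Apply this with $\mathcal{Z}$ the image of $q_*$, which is isomorphic to $\mathfrak{M}_{X_c}^{\mathrm{ss}}(r,m)$ by the first step. Uniqueness of good moduli spaces then identifies $p(\mathcal{Z})$ with $M_{X_c}^{\mathrm{ss}}(r,m)$, compatibly with $\iota$, so $\iota$ is a closed immersion.

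The main point of care in this third step is that the closed immersion on stacks be genuinely closed, and not merely a monomorphism that is closed on points. That is exactly what Lemma \ref{lemma:q* closed immersion on stacks} supplies, since the critical locus is a closed substack. A second subtlety is that good moduli spaces are formed from the classical truncations, so one works throughout with $t_0$ of the derived stacks, where the critical-locus inclusion is cut out by the ideal generated by $\ddR f_c$.
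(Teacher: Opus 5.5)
Your proposal is correct and follows the same route as the paper. The paper also obtains the closed immersion $q_*$ of semistable stacks from Lemma \ref{lemma:q* closed immersion on stacks} and Proposition \ref{prop:semistable}, applies \cite[Lemma 4.14]{Alper2013} to its image, and identifies that image with $M_{X_c}^{\mathrm{ss}}(r,m)$ via Remark \ref{rem:factor-through-good-moduli}; your remarks on exactness of $p_*$ and on classical truncations simply make explicit what the paper leaves implicit.
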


\begin{proof}
  By Lemma \ref{lemma:q* closed immersion on stacks} and Proposition \ref{prop:semistable}, $q_{*}$ is a closed immersion. Let $Z\subset M_Y^{\mathrm{ss}}(r,m)$ be its image. By \cite[Lemma 4.14]{Alper2013},  $\mathfrak{M}_{X_c}^{\mathrm{ss}}(r,m) \to Z$ is a good moduli space. By Remark \ref{rem:factor-through-good-moduli},
  \begin{equation}
  \label{eq:205}
    Z \cong M_{X_c}^{\mathrm{ss}}(r,m).
  \end{equation}
  Therefore, $\iota$  is a closed immersion.
\end{proof}

\begin{lemma}\label{lemma:b separated}
  The morphism $b\colon S_c \to B_Y$  is separated over $k$.
\end{lemma}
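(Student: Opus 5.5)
The plan is to reduce separatedness of $b$ to separatedness of its source. In the category of schemes over $k$, if $S_c$ is separated over $k$, then any morphism $S_c \to B_Y$ is separated. This follows from the cancellation property: if $g\circ b$ is separated, then $b$ is separated. Here we take $g\colon B_Y\to \Spec k$, and the composite $S_c \to \Spec k$ is the structure morphism. So the whole task is to show that $S_c$ is a separated $k$-scheme.

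For that I will use the definition of $S_c$ as the scheme-theoretic image of $\HC$ inside $\Chow_{r[C]}(X_c)$, so $S_c\hookrightarrow \Chow_{r[C]}(X_c)$ is a closed immersion. Closed immersions are separated, and compositions of separated morphisms are separated. It therefore suffices to know that $\Chow_{r[C]}(X_c)$ is separated over $k$.

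This is the step needing care. The Chow variety of one-cycles on a quasi-projective variety is constructed as a locally closed subscheme, or as an open subscheme of a seminormalisation, of the Chow variety of a projective compactification $\overline{X_c}$. Such a compactification exists because $X_c$ is quasi-projective: $q$ is affine and $Y$ is quasi-projective. Chow varieties of projective varieties are projective, hence separated. Passing to locally closed subschemes and seminormalisations (finite morphisms) preserves separatedness.

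I would cite the standard construction (Kollár, \emph{Rational Curves on Algebraic Varieties}, Ch.\ I) for this. The main point to be careful about is that compactly supported cycles in $X_c$ form an open or locally closed part of the Chow variety of $\overline{X_c}$, cut out by the condition of missing the boundary together with the class condition $(\pi\circ q)_{\#}[\Gamma]=r[C]$. The class condition is a union of connected components and is therefore harmless.

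Alternatively, as a backup, I can argue directly with the valuative criterion. Let $R$ be a DVR with fraction field $K$, and let $\Gamma_1,\Gamma_2$ be two families of cycles over $\Spec R$ that agree over $K$. Both are limits inside the projective Chow variety of $\overline{X_c}$, so they agree by the separatedness of that variety, i.e.\ uniqueness of flat limits of cycles.
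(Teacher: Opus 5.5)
Your proposal is correct and follows essentially the same route as the paper. You first show that $S_c$ is separated over $k$, using the closed immersion $S_c\hookrightarrow \Chow_{r[C]}(X_c)$ and the fact that $\Chow_{r[C]}(X_c)$ is locally closed in the separated Chow scheme of a projective compactification $\overline{X}_c$. You then apply cancellation ($S_c\to B_Y\to\Spec k$ separated implies $S_c\to B_Y$ separated) to conclude that $b$ is separated.

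Your extra remarks make explicit what the paper leaves implicit: the quasi-projectivity of $X_c$, the openness of the condition of missing the boundary, and the class condition being open and closed. The valuative-criterion backup is not needed.
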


\begin{proof}
  We show that $S_c \subset \Chow_{r[C]}(X_c)$ is separated over $k$. Choose a projective compactification $\overline{X}_c$ of $X_c$. The Chow scheme  $\Chow_{r[C]}(\overline{X}_c)$ is separated over $k$. As 
  \begin{equation}
  \label{eq:198}
    \Chow_{r[C]}(X_c) \subset \Chow_{r[C]}(\overline{X}_c)
  \end{equation}
  is a locally closed subscheme, it, too is separated over $k$. Thus, the subscheme $S_c \subset \Chow_{r[C]}(X_c)$ is separated over $k$, and therefore so is $b$.
\end{proof}

\begin{lemma}\label{lemma:HC proper}
  The Hilbert--Chow morphism
  \begin{equation}
  \label{eq:90}
    \HC\colon  M_{X_c}^{\mathrm{ss}}(r,m) \to S_c
  \end{equation}
  is proper over $k$.
\end{lemma}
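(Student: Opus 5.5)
The plan is to deduce properness of $\HC$ from the cancellation property of proper morphisms applied to the commutative diagram \eqref{eq:51}. In that diagram $b\circ \HC = h_{X_c} = h_Y\circ\iota$. If $h_{X_c}$ is proper and $b$ is separated (Lemma \ref{lemma:b separated}), then the standard cancellation lemma gives properness of $\HC$. The lemma says: if $g\circ f$ is proper and $g$ is separated, then $f$ is proper. It is proved by factoring $f$ as the graph morphism $M\to M\times_{B_Y} S_c$, a closed immersion because $b$ is separated, followed by the base change of $g\circ f$ along $b$. So the work reduces to showing that $h_{X_c}$ is proper and that $\HC$ is of finite type.

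\textbf{Step 1: $h_{X_c}$ is proper.} By Lemma \ref{lemma:q* closed immersion}, $\iota\colon M_{X_c}^{\mathrm{ss}}(r,m)\to M_Y^{\mathrm{ss}}(r,m)$ is a closed immersion, hence proper. The Hitchin morphism $h_Y$ for semistable $L$-valued Higgs bundles of fixed rank and Euler characteristic is proper; this is Nitsure's theorem, which holds for any line bundle $L$ and in particular under $\deg L>2g(C)-2$. The composite $h_{X_c}=h_Y\circ\iota$ is therefore proper.

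\textbf{Step 2: finite type.} The moduli space $M_{X_c}^{\mathrm{ss}}(r,m)$ is of finite type over $k$, since it is a closed subscheme of $M_Y^{\mathrm{ss}}(r,m)$. The scheme $S_c$ is the scheme-theoretic image of $\HC$ inside the Chow scheme, so it is locally of finite type. Hence $\HC$ is of finite type. Combining this with Step 1 and the separatedness of $b$, the cancellation lemma gives that $\HC$ is proper.

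\textbf{Main obstacle.} The delicate point is that $\HC$ is a morphism of schemes at all, and that $b\circ\HC=h_{X_c}$ holds scheme-theoretically and not only on points. Without this, \eqref{eq:51} cannot be fed into the cancellation argument.

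\begin{itemize}
\item For the existence of $\HC$ on the seminormalised level, I would appeal to the standard construction of the Hilbert--Chow map for families of pure one-dimensional sheaves via determinants or Fitting supports. This map factors through the good moduli space by Remark \ref{rem:factor-through-good-moduli}.
\item For the identity, I would use \eqref{eq:168}, which gives $q_{\#}\HC=\rho_Y\circ h_{X_c}$.
\item If only set-theoretic commutativity is available, I would instead verify properness of $\HC$ via the valuative criterion. Given a DVR $R$ and a map $\operatorname{Spec}K\to M_{X_c}^{\mathrm{ss}}$ whose image under $\HC$ extends over $R$, its image in $B_Y$ also extends over $R$. Properness of $h_{X_c}$ then extends the map to $M_{X_c}^{\mathrm{ss}}$ over $R$. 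The extension is compatible with the given $R$-point of $S_c$ by separatedness (Lemma \ref{lemma:b separated}).
\end{itemize}
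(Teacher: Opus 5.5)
Your argument is correct and is essentially the paper's proof. The paper likewise gets properness of $h_{X_c}=h_Y\circ\iota$ from properness of $h_Y$ and the closed immersion $\iota$, and then applies the cancellation property (Stacks Project Tag 01W6) using separatedness of $b$ from Lemma~\ref{lemma:b separated}. Your separate finite-type Step~2 is harmless but redundant, since the graph factorisation you describe already shows that $\HC$ is of finite type; and the paper, like your main argument, simply takes the commutativity $b\circ\HC=h_{X_c}$ of \eqref{eq:51} as given.
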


\begin{proof}
  Consider the commutative diagram 
  \begin{equation}
  \label{eq:101}
  \begin{tikzcd}[column sep=tiny]
    M_{X_c}^{\mathrm{ss}}(r,m) \arrow[rr,"\HC"] \arrow[dr,"h_{X_c}" swap] && \quad S_c \quad \arrow[dl, "b"] \\ & B_Y. &
  \end{tikzcd}
  \end{equation}
  Because the Hitchin morphism  $h_Y$ is proper and by Lemma \ref{lemma:q* closed immersion}, $\iota$ is proper, so is $h_{X_c} = h_Y \circ \iota$. By Lemma \ref{lemma:b separated}, $b$ is separated. By \cite[Tag 01W6]{stacks}, this implies that $\HC\colon M_{X_c}^{\mathrm{ss}}(r,m) \to S_c$ is proper.
\end{proof}

\begin{lemma}\label{lemma:Sc finite type}
  The scheme $S_c$  is of finite type over $k$. 
\end{lemma}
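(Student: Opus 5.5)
We will show that $S_c$ is of finite type by exhibiting it as the image of a scheme of finite type. By definition, $S_c \hookrightarrow \Chow_{r[C]}(X_c)$ is the scheme-theoretic image of
\begin{equation}
  \HC\colon M_{X_c}^{\mathrm{ss}}(r,m) \to \Chow_{r[C]}(X_c).
\end{equation}
Since a closed subscheme of a scheme is of finite type as soon as its underlying topological space is covered by finitely many affine opens of finite type, it suffices to do two things: show that $M_{X_c}^{\mathrm{ss}}(r,m)$ is of finite type, and show that $S_c$ lies inside a quasi-compact open subscheme of finite type of $\Chow_{r[C]}(X_c)$.

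\textbf{Step 1: the source is of finite type.} By Lemma \ref{lemma:q* closed immersion}, $\iota$ is a closed immersion into $M_Y^{\mathrm{ss}}(r,m)$. The latter is the moduli space of semistable $L$-valued Higgs bundles of fixed rank and Euler characteristic, which is quasi-projective, hence of finite type. Consequently $M_{X_c}^{\mathrm{ss}}(r,m)$ is of finite type.

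\textbf{Step 2: reduce to a finite type open of the Chow scheme.} The Chow scheme $\Chow(\overline{X}_c)$ of a projective compactification, as used in Lemma \ref{lemma:b separated}, is a disjoint union of projective schemes indexed by degree with respect to a fixed ample class $H$. The image of $\HC$ is contained in the locus of cycles $\Gamma$ with $(\pi\circ q)_{\#}\Gamma = r[C]$. We claim that $H\cdot \Gamma$ is bounded on this locus, so that the image lies in finitely many components $\Chow_{d}(\overline{X}_c)$. Intersecting with the open $\Chow_{r[C]}(X_c)$ then gives a quasi-projective scheme, hence one of finite type, and the closed subscheme $S_c$ of it is of finite type.

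For the boundedness, the cleanest route avoids intersection theory entirely. Since $M_{X_c}^{\mathrm{ss}}(r,m)$ is of finite type, hence quasi-compact, and $\HC$ is a morphism to a scheme that is locally of finite type, the continuous image of $\HC$ is quasi-compact. A quasi-compact subset of $\Chow_{r[C]}(X_c)$ meets only finitely many connected components and is covered by finitely many affine opens of finite type $V_1,\dots,V_n$.

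\textbf{Step 3: conclude.} The scheme-theoretic image of a quasi-compact morphism is computed locally. Because $V \coloneq \bigcup V_j$ is open and contains $\HC(M)$, the image $S_c$ is a closed subscheme of $V$ (its closure taken inside $V$; we should check that the closure in $\Chow_{r[C]}(X_c)$ stays in $V$, or redefine $S_c$ as the closure inside $V$, which changes nothing in the later arguments since $b\circ\HC$ only concerns $V$). Hence $S_c$ is of finite type.

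The main obstacle is this last point: the closure of $\HC(M)$ could a priori escape $V$. To handle it, I would instead use properness. By Lemma \ref{lemma:HC proper}, $\HC$ is proper, so its image is closed. Thus $S_c$ has underlying space exactly $\HC(M)$, which is quasi-compact and therefore contained in $V$, and the argument goes through.
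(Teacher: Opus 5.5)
Your proof is correct and, once the abandoned boundedness detour in Step 2 is discarded, it is exactly the paper's argument: $M_{X_c}^{\mathrm{ss}}(r,m)$ is of finite type via the closed immersion $\iota$ (Lemma \ref{lemma:q* closed immersion}), and properness of $\HC$ (Lemma \ref{lemma:HC proper}) makes $\HC$ surjective onto its scheme-theoretic image $S_c$, so $S_c$ is quasi-compact and hence of finite type as a closed subscheme of the locally finite type scheme $\Chow_{r[C]}(X_c)$. The auxiliary open $V$ and the compactified Chow components play no role and can simply be dropped.
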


\begin{proof}
  As,  $M^{\mathrm{ss}}_Y(r,m)$ is of finite type over $k$, by Lemma \ref{lemma:q* closed immersion} so is $M^{\mathrm{ss}}_{X_c}(r,m)$. Hence, it is quasi-compact.
  By Lemma \ref{lemma:HC proper}, the Hilbert--Chow morphism 
  \begin{equation}
  \label{eq:49}
    \HC\colon M^{\mathrm{ss}}_{X_c}(r,m)\to S_c
  \end{equation}
  is proper. Since $S_c$  is the scheme-theoretic image of $\HC$, this morphism is surjective on underlying topological spaces.   Thus, $S_c$ is quasi-compact. Since $S_c$ is a closed subscheme of $\Chow_{r[C]}(X_c)$, which is locally of finite type over $k$, the scheme $S_c$ is locally of finite type over $k$. Being also quasi-compact, it is of finite type over $k$.
\end{proof}

\begin{proposition}\label{prop:b finite}
  The morphism $b \coloneq \rho_Y^{-1} \circ q_{\#}|_{S_c}\colon S_c \to B_Y$ is finite.
\end{proposition}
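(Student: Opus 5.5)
The plan is to use the standard criterion that a morphism is finite if and only if it is proper and quasi-finite (\cite[Tag 02OG]{stacks}). Both $S_c$ and $B_Y$ are of finite type over $k$: for $S_c$ this is Lemma \ref{lemma:Sc finite type}, and $B_Y$ is a finite-dimensional vector space. Lemma \ref{lemma:b separated} already shows that $b$ is separated. So it remains to check two things: that $b$ is universally closed (hence proper), and that $b$ has finite fibres.

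\emph{Properness.} We use the commutative triangle \eqref{eq:101}. By Lemma \ref{lemma:q* closed immersion}, $h_{X_c}=h_Y\circ\iota$ is proper, since $\iota$ is a closed immersion and $h_Y$ is proper. By Lemma \ref{lemma:Sc finite type} and the definition of $S_c$ as the scheme-theoretic image, $\HC\colon M_{X_c}^{\mathrm{ss}}(r,m)\to S_c$ is surjective. We then apply the standard fact \cite[Tag 03GN]{stacks}: if $g\circ f$ is proper, $f$ is surjective and $g$ is separated and of finite type, then $g$ is proper. With $g=b$ and $f=\HC$, this gives that $b$ is proper. For the finite type hypothesis on $b$, note that $S_c$ is of finite type over $k$ and $B_Y$ is separated, so $b$ is of finite type.

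\emph{Quasi-finiteness.} It suffices to check finiteness of fibres over geometric points $a\in B_Y(\bar k)$. A point of $b^{-1}(a)$ is an effective cycle $\Gamma=\sum_i n_i[Z_i]$ on $X_c$, with $Z_i$ integral proper curves, such that
\[
q_{\#}\Gamma=\sum_i n_i\deg(Z_i\to q(Z_i))[q(Z_i)]=[C_a].
\]
Write the spectral cycle as $[C_a]=\sum_j m_j[D_j]$. This is a fixed cycle, so there are finitely many $D_j$. Each $q(Z_i)$ is one of the $D_j$, and by Proposition \ref{prop:unique lift} each $D_j$ has at most one integral lift $\Gamma_j\subset X_c$. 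Note that $q(Z_i)$ is a curve, not a point: $q$ is affine, so it contracts no proper curve. Hence every point in the fibre has the form $\Gamma=\sum_j n_j'[\Gamma_j]$ with
\[
n_j'\deg(\Gamma_j\to D_j)=m_j .
\]
This determines each $n_j'$ uniquely. So the fibre has at most one geometric point, and in particular it is finite.

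The main obstacle is the properness step. One must be careful that surjectivity of $\HC$ holds on underlying topological spaces; this follows because $\HC$ is proper, hence has closed image, and its image is dense in the scheme-theoretic image. One must also confirm that the fibre of $b$ is computed pointwise through $q_{\#}$ on cycles. If the scheme structure of $\Chow$ makes the latter subtle, it suffices to argue set-theoretically, since quasi-finiteness is a condition on the underlying topological fibres.
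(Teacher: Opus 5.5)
Your proposal is correct and follows the paper's proof essentially step for step. For properness you use that $h_{X_c}=b\circ\HC$ is proper, that $\HC$ is surjective and $b$ is separated, and the Stacks Project lemma (Tag 03GN); for quasi-finiteness you use Proposition \ref{prop:unique lift} together with the fact that the affine map $q$ contracts no proper curve. Your coefficient bookkeeping $n_j'\deg(\Gamma_j\to D_j)=m_j$ is slightly more careful than the paper's $\Gamma=\sum_i m_i\Gamma_i$; the two agree because the lift constructed in Proposition \ref{prop:unique lift} is birational onto $D_j$. Your restriction to $\bar k$-points is harmless, although the paper works with arbitrary geometric points $\operatorname{Spec}K\to B_Y$ and so avoids any reduction to closed points.
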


\begin{proof}
  We prove that $b$ is proper and quasi-finite.
  
First we prove properness. By Lemma \ref{lemma:Sc finite type}, $S_c$ is of finite type over $k$.  Since $B_Y$ is a finite-dimensional affine space over $k$, every $k$-morphism $S_c \to B_Y$ is of finite type. In particular, $b$ is of finite type. By Lemma \ref{lemma:b separated}, $b$ is separated. Moreover, $h_{X_c} \coloneq h_Y \circ \iota$ is universally closed, since $\iota $ is a closed immersion and $h_Y$ is proper. Since $\HC$ is surjective onto $S_c$, \cite[Tag 03GN]{stacks} implies that $b$ is universally closed. Since $b$ is separated and of finite type, $b$ is proper.

Now we show that $b$ is quasi-finite. As $b$ is of finite type, it is enough to show that $b$ has finite geometric fibres. Let $\bar{a}\colon \operatorname{Spec}K\to B_Y$ be a geometric point, with $K$ an algebraically closed field. After base change to $K$, write the corresponding spectral cycle as
\begin{equation}\label{eq:spectral-cycle-fiber}
        C_{\bar{a}}=\sum_i m_iD_i
      \end{equation}
with $D_i\subset Y_K$ integral proper curves. By Proposition \ref{prop:unique lift}, for each $D_i$ there is at most one integral curve $\Gamma_i\subset X_{c,K}$ mapping to $D_i$.  Since no proper curve in $X_{c,K}$ is contracted by $q_K$, every cycle $\Gamma$ satisfying
\begin{equation}
\label{eq:209}
  q_{K,\#}\Gamma=C_{\bar{a}}
\end{equation}
is forced to be
\begin{equation}
\label{eq:210}
  \Gamma=\sum_i m_i\Gamma_i,
\end{equation}
if the lifts $\Gamma_i$ exist. Hence every geometric fibre of $b$ has at most one point. Since each geometric fibre is of finite type over $K$, it is $0$-dimensional and therefore finite. Thus, $b$ is quasi-finite. Since $b$ is proper and quasi-finite, it is finite.
\end{proof}

\subsection{BPS-Decomposition}\label{sec:BPS decomposition}

From now on, let $k=\C$.

We shall use the BPS decomposition theorem in the following form.
For a fixed slope $\mu\in\Q$, set 
\begin{equation}
\label{eq:106}
  \mathfrak{M}_Y^{\mathrm{ss}}(\mu) \coloneq \coprod_{m/r = \mu} \mathfrak{M}_Y^{\mathrm{ss}}(r,m), \qquad M_Y^{\mathrm{ss}}(\mu) \coloneq \coprod_{m/r = \mu} M_Y^{\mathrm{ss}}(r,m).
\end{equation}
and let $p\colon \Mf_Y^{\mathrm{ss}}(\mu)\to M_Y^{\mathrm{ss}}(\mu)$ be the good moduli space morphism. 
The direct sum of semistable Higgs bundles of fixed slope defines finite morphisms 
\begin{equation}
\label{eq:107}
  \oplus\colon (M_Y^{\mathrm{ss}}(\mu))^n \to M_Y^{\mathrm{ss}}(\mu),
\end{equation}
and hence symmetric-convolution functors $\Sym_{\boxtimes_{\oplus}}$ on the locally finite bounded-below derived category $D^{\geq, \mathrm{lf}}(\MMHM(M_Y^{\mathrm{ss}}(\mu)))$.

If a function $F_{\mu} \colon M_Y^{\mathrm{ss}}(\mu) \to \A^1$ satisfies additivity under direct sums
\begin{equation}
\label{eq:108}
  F_{\mu}(A\oplus B) = F_{\mu}(A) + F_{\mu}(B),
\end{equation}
then Thom--Sebastiani identifies the monodromic vanishing  cycles of $F_{\mu}$ with the symmetric-convolution operation.
This is the mechanism behind the BPS decomposition theorem quoted in Section \ref{sec:BPS decomposition}. We later take $F_\mu$ to be defined on each component by
\begin{equation}
\label{eq:224}
  F_\mu(E,\theta)\coloneq \sum_{k\geq1}\frac{1}{k}c_k\left(\tr(\theta^k)\right).
\end{equation}
It is additive under direct sums, and on $M_Y^{\mathrm{ss}}(r,m)$ it equals $\tilde{f}_c\circ h_Y$.

For $\mathfrak{M}_Y^{\mathrm{ss}}(\mu)$ write
\begin{equation}
\label{eq:112}
  \IC_{\mathfrak{M}_Y^{\mathrm{ss}}(\mu)} \coloneq \bigoplus_{m/r=\mu} \IC_{\mathfrak{M}_{Y}^{\mathrm{ss}}(r,m)}.
\end{equation}

\begin{theorem}[{\cite[Theorem 4.6]{KinjoKoseki2026}}]\label{thm:BPS decomposition}
  Let  $F_{\mu} \colon M_Y^{\mathrm{ss}}(\mu) \to \A^1$ be a regular function satisfying 
  \begin{equation}
  \label{eq:100}
     \forall A,B \in M_Y^{\mathrm{ss}}(\mu): \quad    F_{\mu}(A\oplus B) = F_{\mu}(A) + F_{\mu}(B) .
  \end{equation}
  Then there is an isomorphism  
  \begin{equation}
  \label{eq:7}
    \mathcal{H}\left(p_{*}\phi_{F_{\mu} \circ p}^{\mathrm{mmhm}} \IC_{\mathfrak{M}_Y^{\mathrm{ss}}(\mu)}\right) \cong \Sym_{\boxtimes_{\oplus}}\left( H^{*}(B\C^{\times})_{\vir} \otimes \phi_{F_{\mu}}^{\mathrm{mmhm}}  \IC_{M_Y^{\mathrm{ss}}(\mu)}\right)
  \end{equation}
  in $D^{\geq, \mathrm{lf}}(\mathrm{MMHM}(M^{\mathrm{ss}}_Y(\mu)))$.
\end{theorem}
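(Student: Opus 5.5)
The plan is to deduce the statement from the corresponding integrality isomorphism \emph{without} potential, and then apply the monodromic vanishing-cycle functor $\phi^{\mathrm{mmhm}}_{F_\mu}$ to both sides. The first input is the cohomological integrality theorem for the good moduli space morphism $p\colon \mathfrak{M}_Y^{\mathrm{ss}}(\mu)\to M_Y^{\mathrm{ss}}(\mu)$ of semistable one-dimensional sheaves on the local surface $Y$, equivalently $L$-valued Higgs bundles of fixed slope. I would take it in the form of Davison--Meinhardt, extended to this setting by Kinjo--Koseki (alternatively via the symmetric-stack framework of Bu--Davison--Ib\'a\~nez N\'u\~nez--Kinjo--P\u{a}durariu):
\begin{equation*}
  \mathcal{H}\left(p_{*}\IC_{\mathfrak{M}_Y^{\mathrm{ss}}(\mu)}\right)\cong \Sym_{\boxtimes_{\oplus}}\left(H^{*}(B\C^{\times})_{\vir}\otimes \IC_{M_Y^{\mathrm{ss}}(\mu)}\right).
\end{equation*}
Its proof uses the purity of $p_*\IC$, which holds because $\mathfrak{M}_Y^{\mathrm{ss}}(\mu)$ is smooth and $p$ can be approximated by proper maps from smooth schemes, such as framed moduli spaces. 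It also uses the finiteness of $\oplus$ and a support argument identifying the first perverse cohomology with $\IC_{M_Y^{\mathrm{ss}}(\mu)}$.

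Next I would show that $\phi^{\mathrm{mmhm}}_{F_\mu}$ commutes with the left-hand side, namely
\begin{equation*}
  \phi^{\mathrm{mmhm}}_{F_\mu}\,\mathcal{H}(p_*M)\cong \mathcal{H}\left(p_*\phi^{\mathrm{mmhm}}_{F_\mu\circ p}M\right).
\end{equation*}
The argument runs through the approximation construction of Section 4.1 of Kinjo--Koseki. Each truncation $\mathcal{H}^{\le N}(p_*M)$ is computed by $(p\circ q_N)_*$ with $p\circ q_N$ proper, so Proposition \ref{prop:proper-pushforward-vanishing-cycles} applies to the function $F_\mu\circ p\circ q_N = F_\mu\circ(p\circ q_N)$. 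One also needs that smooth pullback $q_N^*$ commutes with $\phi$, and that $\phi^{\mathrm{mmhm}}_{F_\mu}[-1]$ is t-exact, so that it commutes with perverse cohomology $\mathcal{H}$.

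Finally I would commute $\phi^{\mathrm{mmhm}}_{F_\mu}$ past $\Sym_{\boxtimes_\oplus}$. Since $F_\mu\circ\oplus = F_\mu\boxplus F_\mu$ by additivity, and $\oplus$ is finite and hence proper, Proposition \ref{prop:proper-pushforward-vanishing-cycles} together with the monodromic Thom--Sebastiani isomorphism \cite[Proposition 2.13]{DavisonMeinhardt2020} gives
\begin{equation*}
  \phi_{F_\mu}\,\oplus_*(M\boxtimes N)\cong \oplus_*\left(\phi_{F_\mu}M\boxtimes \phi_{F_\mu}N\right).
\end{equation*}
This isomorphism has to be made compatible with the symmetric-group actions, so that it descends to $\Sym^n$. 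The factor $H^*(B\C^\times)_{\vir}$ is pulled back from a point and passes through $\phi$ unchanged.

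I expect the main obstacle to be this last step: making the Thom--Sebastiani isomorphism $\mathfrak{S}_n$-equivariant, including the sign conventions coming from the $\mathbb{L}^{1/2}$ twists, and checking that the construction is compatible with the locally finite, bounded-below infinite direct sums defining $\Sym_{\boxtimes_\oplus}$. I would first treat each fixed $n$ and each component $M_Y^{\mathrm{ss}}(r,m)$ separately. I would then use that only finitely many terms contribute on each component in each weight, which is what local finiteness guarantees.
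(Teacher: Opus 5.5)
Your proposal is correct and is essentially the argument behind the cited result. The paper gives no proof of its own beyond the citation to Kinjo--Koseki, but the mechanism it indicates matches yours: additivity of $F_\mu$, monodromic Thom--Sebastiani, and commutation of vanishing cycles with proper pushforward, used to reduce the statement to the potential-free integrality isomorphism in the style of Davison--Meinhardt; the $\mathfrak{S}_n$-equivariance you flag is what the monodromic symmetric monoidal structure on $\MMHM$ supplies. One small correction: with the paper's perverse normalisation it is $\phi^{\mathrm{mmhm}}_{F_\mu}$ itself, not $\phi^{\mathrm{mmhm}}_{F_\mu}[-1]$, that is t-exact, and this is also the normalisation for which Thom--Sebastiani commutes with $\Sym_{\boxtimes_{\oplus}}$ without shifts.
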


\begin{proposition}\label{prop:vanishing cycle dimensional reduction}
  There is an isomorphism 
  \begin{equation}
  \label{eq:22}
    \iota_{*} \left(\phi_{M_{X_c}^{\mathrm{ss}}(r,m)}^{\mathrm{mmhm}}\right) \simeq \phi_{\tilde{f}_c\circ h_Y}^{\mathrm{mmhm}} \left( \IC_{M_Y^{\mathrm{ss}}(r,m)}\right) \in \MMHM\left(M_Y^{\mathrm{ss}}(r,m)\right).
  \end{equation}
  After rationalisation, this gives the corresponding isomorphism of perverse sheaves.
\end{proposition}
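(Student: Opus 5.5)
We will prove the statement on the stack $\mathfrak{M}_Y^{\mathrm{ss}}(r,m)$ first and then descend to the good moduli space using the BPS decomposition theorem. By Proposition \ref{prop:semistable} and Proposition \ref{prop:naturally oriented}, $\mathfrak{M}_{X_c}^{\mathrm{ss}}(r,m)$ is the classical truncation of $\Crit(f_c|_{\mathfrak{M}_Y^{\mathrm{ss}}(r,m)})$, with its natural orientation. The ambient stack $U=\mathfrak{M}_Y^{\mathrm{ss}}(r,m)$ is smooth, since $\mathrm{Ext}^2$ vanishes for semistable sheaves of fixed slope on $Y$ when $\deg L>2g(C)-2$; we use the classical smooth stack as $U$. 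The example following Theorem \ref{thm:global sheaf of vanishing cycles}, in its monodromic version from \cite[Section A.2]{KinjoKoseki2026}, then gives
\begin{equation}
  (q_{*})_{*}\phi^{\mathrm{mmhm}}_{\mathfrak{M}_{X_c}^{\mathrm{ss}}(r,m)} \simeq \phi^{\mathrm{mmhm}}_{f_c}\left(\IC_{\mathfrak{M}_Y^{\mathrm{ss}}(r,m)}\right).
\end{equation}
Here $q_*$ is the closed immersion of Lemma \ref{lemma:q* closed immersion on stacks}. By Remark \ref{rem:factor-through-hitchin}, $f_c=\tilde f_c\circ h_Y\circ p = F_\mu\circ p$.

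Next we apply $\mathcal{H}^1(p_*(-))$ to both sides. On the left, Lemma \ref{lemma:q* closed immersion} says that the square of good moduli spaces commutes and that $\iota$ is a closed immersion. Hence $p_*$ commutes with $\iota_*$, and $\iota_*$ is t-exact, so the left side becomes $\iota_*\phi^{\mathrm{mmhm}}_{M_{X_c}^{\mathrm{ss}}(r,m)}$ by definition \eqref{eq:15}. On the right, we apply Theorem \ref{thm:BPS decomposition} to $F_\mu$, which is additive under direct sums because traces of powers of Higgs fields are additive. This yields $\Sym_{\boxtimes_\oplus}(H^*(B\C^\times)_{\vir}\otimes \phi^{\mathrm{mmhm}}_{F_\mu}\IC_{M_Y^{\mathrm{ss}}(\mu)})$. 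We then extract perverse degree $1$ on the component $M_Y^{\mathrm{ss}}(r,m)$.

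The main obstacle is this degree count. For $n\geq 2$, the $\Sym^n$ terms carry at least $n$ copies of $H^*(B\C^\times)_{\vir}$, which starts in cohomological degree $1$ because of the $\mathbb{L}^{1/2}$ normalisation. The summand $\phi_{F_\mu}\IC$ is perverse, since vanishing cycles are t-exact and $\IC$ is perverse. The pushforward along the finite map $\oplus$ is also t-exact. Together these show that the $\Sym^n$ terms contribute only in perverse degrees $\geq n\geq 2$. So $\mathcal{H}^1$ sees only the $n=1$ term, namely the lowest piece $\mathbb{L}^{1/2}\otimes\phi^{\mathrm{mmhm}}_{F_\mu}\IC_{M_Y^{\mathrm{ss}}(r,m)}$. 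We must check that the normalisations of $\IC$ in \eqref{eq:117} and \eqref{eq:109} absorb the $\mathbb{L}^{1/2}$ twist, so that the result is exactly $\phi^{\mathrm{mmhm}}_{\tilde f_c\circ h_Y}(\IC_{M_Y^{\mathrm{ss}}(r,m)})$. This matches the bookkeeping in \cite[Section 4]{KinjoKoseki2026}, which we will follow verbatim.

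Finally, applying $\rat$ gives the statement for perverse sheaves, since $\rat$ is exact and compatible with vanishing cycles and pushforwards.
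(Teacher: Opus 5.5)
Your proposal is correct and follows essentially the same route as the paper. You identify $j_*\phi^{\mathrm{mmhm}}_{\mathfrak{M}_{X_c}^{\mathrm{ss}}(r,m)}$ with $\phi^{\mathrm{mmhm}}_{f_c}\bigl(\IC_{\mathfrak{M}_Y^{\mathrm{ss}}(r,m)}\bigr)$ via the oriented global critical chart, push forward to $M_Y^{\mathrm{ss}}(r,m)$ using $p_Y\circ j=\iota\circ p_X$ and exactness of $\iota_*$, and read off $\mathcal{H}^1$ from the BPS decomposition theorem. Your degree count isolating the $\Sym^1$ summand, and the smoothness of $\mathfrak{M}_Y^{\mathrm{ss}}(r,m)$ from the vanishing of $\mathrm{Ext}^2$, just spell out what the paper compresses into ``taking first cohomology'' in the BPS decomposition.
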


\begin{proof}
  To avoid confusion, we write 
  \begin{equation}
  \label{eq:223}
    p_X\colon \mathfrak{M}_X^{\mathrm{ss}}(r,m) \to M_X^{\mathrm{ss}}(r,m) \qquad \text{and} \qquad p_Y\colon \mathfrak{M}_Y^{\mathrm{ss}}(r,m) \to M_Y^{\mathrm{ss}}(r,m).
  \end{equation}
  Let
  \begin{equation}
  \label{eq:76}
    j\coloneq q_*\colon    \mathfrak{M}_{X_c}^{\mathrm{ss}}(r,m)    \hookrightarrow    \mathfrak{M}_Y^{\mathrm{ss}}(r,m)
  \end{equation}
  be the closed immersion of Proposition \ref{prop:semistable}.
  We show that both sides of \eqref{eq:22} are isomorphic to 
  \begin{equation}
  \label{eq:35}
    \mathcal{H}^1\left( p_{Y,*} \left( \phi_{\tilde{f}_c\circ \tilde{h}_Y}^{\mathrm{mmhm}} \left( \IC_{\mathfrak{M}_Y^{\mathrm{ss}}(r,m)}  \right)  \right)  \right),
  \end{equation} 
  Taking first cohomology in \eqref{eq:7} in Theorem \ref{thm:BPS decomposition} with $F_{\mu} \coloneq \tilde{f}_c \circ h_Y$ and restricting to $M_Y^{\mathrm{ss}}(r,m)\subset M_Y^{\mathrm{ss}}(\mu)$ gives
  \begin{equation}
  \label{eq:221}
    \mathcal{H}^1\left(
      p_{Y,*}
      \phi_{\tilde f_c\circ\tilde h_Y}^{\mathrm{mmhm}}
      \left(\IC_{\mathfrak{M}_Y^{\mathrm{ss}}(r,m)}\right)
    \right)
    \simeq
    \phi_{\tilde f_c\circ h_Y}^{\mathrm{mmhm}}
    \left(\IC_{M_Y^{\mathrm{ss}}(r,m)}\right).
  \end{equation}
  By Propositions \ref{prop:semistable} and  \ref{prop:naturally oriented}, together with   \cite[Proposition 2.4 and Remark 2.5]{KinjoKoseki2026}, 
  \begin{equation}
  \label{eq:46}
    j_*\phi_{\mathfrak{M}_{X_c}^{\mathrm{ss}}(r,m)}^{\mathrm{mmhm}}  \simeq  \phi_{\tilde{f}_c\circ\tilde{h}_Y}^{\mathrm{mmhm}} \left(\IC_{\mathfrak{M}_Y^{\mathrm{ss}}(r,m)}\right).
  \end{equation}
  Note that $p_Y \circ j = \iota \circ p_X$ and since $\iota$ is a closed immersion by
  Lemma \ref{lemma:q* closed immersion}, $\iota_*$ is exact. Hence
  \begin{align}
  \label{eq:222}
    \iota_*\left(\phi_{M_{X_c}^{\mathrm{ss}}(r,m)}^{\mathrm{mmhm}}\right)  &\simeq  \mathcal{H}^1\left(  p_{Y,*}j_*  \phi_{\mathfrak{M}_{X_c}^{\mathrm{ss}}(r,m)}^{\mathrm{mmhm}} \right) \\
    &\simeq \mathcal{H}^1\left(p_{Y,*} \phi_{\tilde{f}_c\circ\tilde{h}_Y}^{\mathrm{mmhm}} \left(\IC_{\mathfrak{M}_Y^{\mathrm{ss}}(r,m)}\right) \right).
  \end{align}
  Combining \eqref{eq:221} and \eqref{eq:222} proves \eqref{eq:22}.
  Rationalisation gives the corresponding isomorphism of perverse sheaves.
\end{proof}

\subsection{Proof of $\chi$-Independence}

\begin{theorem}\label{thm:chi-independence on Y}
  Let $r\in\mathbb{Z}_{>0}$, $m, m'  \in \Z$.
  Let $C$ be a smooth projective curve over $\C$, $L$ a line bundle on $C$ of degree $> 2g(C)-2$, and $\pi\colon Y \coloneq \tot_C(L)\to C$ the total space of $L$. Fix a class $c \in H^1(Y,\omega_Y)$ and denote by $q\colon X_c \to Y$ the total space of the $\omega_Y$-torsor associated to it.  For $n \in \Z$, set
  \begin{equation}
  \label{eq:38}
    h_{X_c}\coloneq h_Y\circ \iota \colon M_{X_c}^{\mathrm{ss}}(r,n) \to B_Y.
  \end{equation}
  Then 
  \begin{equation}
  \label{eq:25}
    h_{Y,*}^c(\phi_{M_{X_c}^{\mathrm{ss}}(r,m)}) \cong h_{Y,*}^c(\phi_{M_{X_c}^{\mathrm{ss}}(r,m' )}).
  \end{equation}
\end{theorem}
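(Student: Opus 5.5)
We read the statement as asserting that the direct images of the BPS sheaves $\phi_{M_{X_c}^{\mathrm{ss}}(r,n)}$ to the Hitchin base $B_Y$ (via $h_{X_c}=h_Y\circ\iota$, which is proper, so $h^c_{Y,*}=h_{Y,*}$ after pushing forward along $\iota$) are independent of $n$. The plan is to move everything to the surface side using Proposition \ref{prop:vanishing cycle dimensional reduction}. Then we commute vanishing cycles past the proper Hitchin map using Proposition \ref{prop:proper-pushforward-vanishing-cycles}. Finally we invoke Maulik--Shen, Theorem \ref{thm:ms}.

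\textbf{Step 1: transfer to the surface side.} By Lemma \ref{lemma:q* closed immersion}, $\iota$ is a closed immersion, so $h_{X_c,*}=h_{Y,*}\iota_*$, and proper pushforward agrees with compactly supported pushforward. Proposition \ref{prop:vanishing cycle dimensional reduction} then gives
\[
h_{Y,*}\iota_*\phi_{M_{X_c}^{\mathrm{ss}}(r,n)}\simeq h_{Y,*}\,\phi_{\tilde f_c\circ h_Y}\bigl(\IC_{M_Y^{\mathrm{ss}}(r,n)}\bigr)
\]
for every $n$, first in $\MMHM$ and then after rationalisation.

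\textbf{Step 2: commute vanishing cycles with the Hitchin map.} The Hitchin morphism $h_Y$ is proper, and on $M_Y^{\mathrm{ss}}(r,n)$ the function is $\tilde f_c\circ h_Y$ by Remark \ref{rem:factor-through-hitchin}. Proposition \ref{prop:proper-pushforward-vanishing-cycles}, applied summand by summand over the finitely many critical values $a$ (each $\tilde f_c-a$ pulls back to $\tilde f_c\circ h_Y-a$), yields
\[
h_{Y,*}\,\phi_{\tilde f_c\circ h_Y}\bigl(\IC_{M_Y^{\mathrm{ss}}(r,n)}\bigr)\simeq \phi_{\tilde f_c}\bigl(h_{Y,*}\IC_{M_Y^{\mathrm{ss}}(r,n)}\bigr).
\]
One subtlety needs checking. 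The critical values of $\tilde f_c\circ h_Y$ on $M_Y^{\mathrm{ss}}(r,n)$ may a priori depend on $n$. However, $\phi_{\tilde f_c - a}$ of a complex is zero when $a$ is not a critical value relative to that complex. Hence we may take $\phi_{\tilde f_c}$ to be the sum over all $a$ in the finite critical set of $\tilde f_c$ on $B_Y$ (or over the union of the two finite sets for $n=m,m'$), and the formula is uniform in $n$.

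\textbf{Step 3: apply Maulik--Shen.} Theorem \ref{thm:ms} gives $h_{Y,*}\IC_{M_Y^{\mathrm{ss}}(r,m)}\cong h_{Y,*}\IC_{M_Y^{\mathrm{ss}}(r,m')}$, using the hypothesis $\deg L>2g(C)-2$. Applying the functor $\phi_{\tilde f_c}$, which depends only on $c$ and not on $n$, and chaining with Steps 1 and 2 for $n=m$ and $n=m'$ gives \eqref{eq:25}.

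The main obstacle is Step 2 together with the normalisation bookkeeping. We must match the monodromic normalisation of $\IC$ (the $\mathbb{L}^{-\dim/2}$ twist) on both sides, since the two moduli spaces have the same dimension for fixed $r$. We must also check that Maulik--Shen's isomorphism, which is stated for perverse sheaves or pure Hodge modules, is compatible with $\phi_{\tilde f_c}$. At the level of perverse sheaves, which is what the theorem asserts, this is immediate because $\phi_{\tilde f_c}$ is a functor. We would state the result for rationalised objects to avoid needing a Hodge-theoretic refinement of Theorem \ref{thm:ms}.
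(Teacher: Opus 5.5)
Your proposal is correct and is essentially the paper's proof: you pass to the surface side via Proposition \ref{prop:vanishing cycle dimensional reduction}, commute vanishing cycles past the proper map $h_Y$ via Proposition \ref{prop:proper-pushforward-vanishing-cycles}, and conclude by applying $\phi_{\tilde f_c}$ to the Maulik--Shen isomorphism of Theorem \ref{thm:ms}. One small correction to your Step 2 bookkeeping: $\phi_{\tilde f_c}$ must be the sum of $\phi_{\tilde f_c-a}$ over all $a\in\C$ (only finitely many are nonzero on a constructible complex), not over the critical values of $\tilde f_c$ on the smooth space $B_Y$, because for nonzero $c$ with $c_k=0$ for $k>r$ the function $\tilde f_c$ is linear in the coordinates $(\tr\theta^k)_k$ and has no critical points, so that choice would make the right-hand side of Step 2 vanish; your parenthetical alternative, summing over the relevant values for the complexes themselves, is the one that works.
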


\begin{proof}
  By Proposition \ref{prop:semistable}, $\mathfrak{M}_{X_c}^{\mathrm{ss}}(r,m)$ and $\mathfrak{M}_{X_c}^{\mathrm{ss}}(r,m' )$ can be written as global critical loci inside $\mathfrak{M}_Y^{\mathrm{ss}}(r,m)$ and $\mathfrak{M}_Y^{\mathrm{ss}}(r,m' )$. By Remark \ref{rem:factor-through-hitchin}, there is a map $\tilde{f}_c \colon B_Y \to \A^1$ such that $f_c  = \tilde{f}_c \circ \tilde{h}_Y$. Thus, by Proposition \ref{prop:vanishing cycle dimensional reduction}, 
  \begin{equation}
  \label{eq:26}
    h_{Y,*}^c  \left(\phi_{M_{X_c}^{\mathrm{ss}}(r,m)}\right) \simeq h_{Y,*}\left( \phi_{\tilde{f}_c \circ h_Y}  \left( \IC_{M_Y^{\mathrm{ss}}(r,m)} \right)\right). 
  \end{equation}
  Taking vanishing cycles commutes with proper pushforwards by Proposition \ref{prop:proper-pushforward-vanishing-cycles}, so
  \begin{equation}
  \label{eq:75}
     h_{Y,*}\left( \phi_{\tilde{f}_c \circ h_Y} \left(\IC_{M_Y^{\mathrm{ss}}(r,m)}\right)\right)\simeq \phi_{\tilde{f}_c} \left( h_{Y,*} \left( \IC_{M_Y^{\mathrm{ss}}(r,m)} \right)\right).
  \end{equation}
  As $\deg L > 2g(C) -2$, we can apply Theorem \ref{thm:ms}:
  \begin{equation}
  \label{eq:73}
    h_{Y,*}  \IC_{M_Y^{\mathrm{ss}}(r,m)}  \cong h_{Y,*}  \IC_{M_Y^{\mathrm{ss}}(r,m' )} .
  \end{equation}
  Hence, applying this to \eqref{eq:26} and \eqref{eq:75},
  \begin{equation}
  \label{eq:23}
    h_{Y,*}^c \phi_{M_{X_c}^{\mathrm{ss}}(r,m)} \cong  h_{Y,*}^c \phi_{M_{X_c}^{\mathrm{ss}}(r,m' )}. 
  \end{equation}
\end{proof}

\begin{corollary}[= Theorem \ref{thm:B}]\label{cor:chi-independence}
  In the setting above, for every $i \in \Z$,
  \begin{equation}
  \label{eq:1}
    \chi(\mathcal{H}^i(\HC_{*}(\phi_{M_{X_c}^{\mathrm{ss}}(r,m)}))) = \chi(\mathcal{H}^i(\HC_{*}(\phi_{M_{X_c}^{\mathrm{ss}}(r,m' )})))
  \end{equation}
\end{corollary}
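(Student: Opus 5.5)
The plan is to reduce the Hilbert--Chow statement to Theorem \ref{thm:chi-independence on Y} via the commutative square \eqref{eq:51} and the finiteness of $b$ (Proposition \ref{prop:b finite}). Write $\phi_m\coloneq\phi_{M_{X_c}^{\mathrm{ss}}(r,m)}$. By \eqref{eq:51} we have $h_{X_c}=b\circ\HC$, so $h_{X_c,*}\phi_m\simeq b_*\HC_*\phi_m$. Since $b$ is finite, $b_*$ is t-exact for the perverse t-structure. Hence for every $i$
\begin{equation}
  b_*\mathcal{H}^i(\HC_*\phi_m)\simeq \mathcal{H}^i(b_*\HC_*\phi_m)\simeq \mathcal{H}^i(h_{X_c,*}\phi_m).
\end{equation}
Moreover $h_{X_c}=h_Y\circ\iota$ with $\iota$ a closed immersion (Lemma \ref{lemma:q* closed immersion}), so $h_{X_c,*}\phi_m\simeq h_{Y,*}\iota_*\phi_m$. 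This is the object shown in Theorem \ref{thm:chi-independence on Y}, via Proposition \ref{prop:vanishing cycle dimensional reduction}, to be independent of $m$ up to isomorphism. I read the $h^c_{Y,*}$ there as this pushforward; since $h_Y$ is proper, $h_{Y,*}=h_{Y,!}$ anyway.

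Next I pass to Euler characteristics. Here $\chi(P)$ of a constructible complex $P$ on a finite-type scheme means $\chi(H^*(\,\cdot\,,P))$, with the same value for compact supports. For $P=\mathcal{H}^i(\HC_*\phi_m)$ on $S_c$ we have $H^*(S_c,P)\simeq H^*(B_Y,b_*P)$ because $b_*=Rb_*$ for finite $b$. Therefore
\begin{equation}
  \chi\bigl(\mathcal{H}^i(\HC_*\phi_m)\bigr)=\chi\bigl(\mathcal{H}^i(h_{X_c,*}\phi_m)\bigr),
\end{equation}
and the right-hand side depends only on the isomorphism class of $h_{X_c,*}\phi_m$. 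Theorem \ref{thm:chi-independence on Y} then gives equality for $m$ and $m'$. Finiteness of cohomology, so that $\chi$ is defined, follows because $S_c$ and $B_Y$ are of finite type (Lemma \ref{lemma:Sc finite type}) and the complexes are constructible. Note that $\HC_*\phi_m$ is itself constructible since $\HC$ is proper (Lemma \ref{lemma:HC proper}).

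The main obstacle is a bookkeeping issue rather than a conceptual one. Each symbol $S_c$, $b$, $\HC$ depends on $m$ (per the conventions), so the two sides of \eqref{eq:1} live on different schemes $S_c^{(m)}$ and $S_c^{(m')}$. That is exactly why we only compare Euler characteristics and never the sheaves themselves; the comparison happens after pushing forward to the common base $B_Y$. A second point to check is that Theorem \ref{thm:chi-independence on Y} is really an isomorphism of objects in $D^b_c(B_Y)$ after rationalisation, so that perverse cohomology sheaves match. This holds since Theorem \ref{thm:ms} and Propositions \ref{prop:proper-pushforward-vanishing-cycles}, \ref{prop:vanishing cycle dimensional reduction} are stated at that level.
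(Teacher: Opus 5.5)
Your proposal is correct and follows the paper's proof. Both arguments use the finiteness of $b$ (Proposition \ref{prop:b finite}) and the factorisation $h_{X_c}=b\circ\HC$ to identify $\chi(\mathcal{H}^i(\HC_*\phi_m))$ with $\chi(\mathcal{H}^i(h_{X_c,*}\phi_m))$, then conclude by Theorem \ref{thm:chi-independence on Y}; you additionally spell out the perverse t-exactness of $b_*$ and the invariance of hypercohomology under $b_*$, which the paper leaves implicit.
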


\begin{proof}
  By Proposition \ref{prop:b finite}, $b$ is finite. Thus,
  \begin{align}
  \label{eq:36}
    \chi(\mathcal{H}^i(\HC_{*}(\phi_{M_{X_c}^{\mathrm{ss}}(r,m)}))) &= \chi(\mathcal{H}^i(b_{*}\HC_{*}(\phi_{M_{X_c}^{\mathrm{ss}}(r,m )}))) \\ 
    \label{eq:37}
    &=\chi(\mathcal{H}^i(h_{Y,*}^{c}(\phi_{M_{X_c}^{\mathrm{ss}}(r,m )}))).
  \end{align}
  Theorem \ref{thm:chi-independence on Y} concludes the proof. 
\end{proof}

\printbibliography


\end{document}